\numberwithin{equation}{section}
\definecolor{webgreen}{rgb}{0,.5,0}
\definecolor{webbrown}{rgb}{.8,0,0}
\definecolor{emphcolor}{rgb}{0.95,0.95,0.95}
\pgfplotsset{compat=1.17}
\newtheorem{theorem}{Theorem}[section]
\newtheorem{lemma}[theorem]{Lemma}
\newtheorem{assumption}{Assumption}[section]
\newtheorem{definition}[theorem]{Definition}
\theoremstyle{definition}
\newtheorem{example}[theorem]{Example}
\newtheorem{remark}[theorem]{Remark}
\numberwithin{equation}{section}
\DeclareMathOperator*{\argmax}{arg\,max}
\newcommand{\mR}{\mathbb{R}}
\newcommand{\mN}{\mathbb{N}}
\newcommand{\mE}{\mathbb{E}}
\newcommand{\mF}{\mathbb{F}}
\newcommand{\mP}{\mathbb{P}}
\renewcommand{\star}{\ast}
\renewcommand{\epsilon}{\varepsilon}
\newcommand{\cX}{\mathcal{X}}
\newcommand{\cY}{\mathcal{Y}}
\newcommand{\cA}{\mathcal{A}}
\newcommand{\cF}{\mathcal{F}}
\newcommand{\cP}{\mathcal{P}}
\title{Major-Minor Mean Field Game of Stopping: An Entropy Regularization Approach}
\author{ Xiang Yu\thanks{Department of Applied Mathematics,  The Hong Kong Polytechnic University, Kowloon, Hong Kong. Email:\url{xiang.yu@polyu.edu.hk}}
\and Jiacheng Zhang\thanks{Department of Statistic, The Chinese University of Hong Kong, New Territories, Hong Kong. Email:\url{jiachengzhang@cuhk.edu.hk}}
	\and Keyu Zhang\thanks{
	Department of Mathematical Sciences, Tsinghua University, Beijing , China. Email:\url{Zhangky21@mails.tsinghua.edu.cn}}
	\and Zhou Zhou\thanks{School of Mathematics and Statistics, University of Sydney, Sydney, Australia. Email:\url{zhou.zhou@sydney.edu.au}} }
\date{}
\begin{document}
	\maketitle
	
	\vspace{-0.3in}	
	\begin{abstract}
This paper studies a discrete-time major-minor mean field game of stopping where the major player can choose either an optimal control or stopping time. We look for the relaxed equilibrium as a randomized stopping policy, which is formulated as a fixed point of a set-valued mapping, whose existence is challenging by direct arguments. To overcome the difficulties caused by the presence of a major player, we propose to study an auxiliary problem by considering entropy regularization in the major player's problem while formulating the minor players' optimal stopping problems as linear programming over occupation measures. We first show the existence of regularized equilibria as fixed points of some simplified set-valued operator using the Kakutani–Fan–Glicksberg fixed-point theorem. Next, we prove that the regularized equilibrium converges as the regularization parameter $\lambda$ tends to 0 and the limit corresponds to a fixed point of the original operator, thereby confirming the existence of a relaxed equilibrium in the original problem.  {We also extend this entropy regularization method to the mean-field game problem where the minor players choose optimal controls.} 
		
		\vspace{0.1in}
	\noindent \textbf{Mathematics Subject Classification}: 91A13, 91A55, 60G40 
	
	\vspace{0.1in}
	\noindent {\small\textbf{Keywords:} Major-minor mean field game of stopping, Markov decision processes, relaxed equilibrium, entropy regularization, linear programming, fixed point} 
	\end{abstract}

\vspace{0.1in}    
\section{Introduction}	

After the seminal studies of Lasry and Lions \cite{2007lasry} and Huang, Caines, and Malhamé \cite{2006huang}, mean field game (MFG) problems have garnered significant development during the past decades thanks to the merit of tractability and the broad applications in different fields such as finance, economics and biology. For a preliminary introduction on this topic, we refer to Cardaliaguet \cite{Pierre2013}; and more recent advances can be found in Carmona and Delarue \cite{book:carmona1,book:carmona2}. 

With the fast-growing theoretical advances in mean-field theories and stochastic control, different variations of MFG have been proposed and studied. Among them, one burgeoning direction is to study the MFG problem of optimal stopping, where all agents aim to choose the optimal stopping time under weak interactions with the population via the  distribution of the stopped state processes or the stopping times. MFG of optimal stopping was first studied by Nutz \cite{2018nutz} and by Carmona, Delarue, and Lacker \cite{2017Carmona}, employing a probabilistic approach in a general framework. In settings where the underlying process is described by a stochastic differential equation (SDE), MFG of optimal stopping has been examined by Bertucci \cite{2018BERTUCCI} using a PDE approach, and by Bouveret, Dumitrescu, and Tankov \cite{2020tankovsicon}, as well as Dumitrescu, Leutscher, and Tankov \cite{Peter2021}, using the linear programming approach based on occupation measures. 

Another important extension of MFG problems is to investigate the asymmetric interactions between a major player and a continuum of minor players, see some pioneer studies among \cite{bensoussan_mean_2016, carmona_alternative_2017,  Carmona2016,huang_large-population_2010, nguyen_linear-quadratic-gaussian_2012,dayanikli_machine_2024,aurell_optimal_2022}. This direction of extension addresses the limitation of the stringent assumption in classical MFG that, as the number of players grows large, the influence of any single player on the population becomes asymptotically negligible, which does not match with some practical applications when some decision maker's choices have prominent impacts on all other individual players even in the large population. However, as the minor player's objective function and state process depend on the state of the major player, the policy chosen by each minor player naturally need to be adapted to the filtration generated by the major player's state process as if there exists a common noise. Due to this reason, the presence of a major player spurs many theoretical challenges closely related to the well-known difficulties caused by common noise.

In the presence of common noise, some key mathematical obstacles arise such as the infinite dimensional space of stochastic measure flows hinders some standard compactification arguments and the fixed point arguments based on topological methods, see the detailed discussion in Carmona, Delarue, and Lacker \cite{2014Carmona} and their proposed discretization procedure of common noise. In the context of MFG of optimal stopping, incorporating common noise brings more challenges. The models in \cite{2017Carmona,2018nutz} allow for common noise and establish the existence of an equilibrium by exploiting the special structure of the game. However, these models have limitations and cannot be applied to certain types of games, such as those of the war of attrition type. To the best of our knowledge, the only work that considers a general framework for MFG of optimal stopping with common noise is \cite{Peter2023}, which, due to the aforementioned technical challenges, restricts their setup to a discrete-time setting and a finite state space to facilitate the linear programming approach. 

The present paper aims to investigate a major-minor MFG, in which the major player faces either an optimal control or an optimal stopping problem, while each minor player in the large population aims to select an optimal stopping time. For technical convenience, we also focus on a discrete-time setting and assume that the state space of the major player is finite.
The mean field feature is encoded by the fact that the transition kernels and reward functions of major and minor players depend on the law of the minor players. In the context of optimal stopping, this is specified by a flow of subprobability measures \( \mu_t \) for \( t = 0, 1, \dots, T \) and \( m_t \) for \( t = 0, 1, \dots, T-1 \), where \( \mu_t \) represents the distribution of minor players who have stopped at time \( t \), and \( m_t \) represents the distribution of minor players still active in the game at time $t$. Comparing with the classical MFG problems, each minor player’s transition kernel and reward function now depend on the major player’s state. As a direct consequence, the decision making of the minor players are influenced by a form of common noise stemming from the state of the major player, which renders the processes \( \mu \) and \( m \) stochastic and adapted to the filtration generated by the major player. In contrast to \cite{Peter2023}, this form of common noise is not an exogenous source of randomness, but influenced by the major player’s choice of policy. The coupled structure in the  equilibrium stopping policies of major and minor players introduces additional complexity and remains underexplored in the existing literature.

In this study, we prove the existence of a relaxed equilibrium (see Definition \ref{def:relaxed}) for a major-minor MFG of stopping, allowing for randomized policies for both the major and minor players. Contrary to the compactification technique used in the literature of MFG problems (see e.g., \cite{2014Carmona, 2017Carmona,  Peter2021,lacker_mean_2015}), the non-uniqueness of the solutions to the major player's problem leads to a set-valued  mapping that is not convex-valued in general, which hinders the direct application of the Kakutani-Fan-Glicksberg theorem, see the discussion below Definition \ref{def:relaxed}. In response to this challenge, instead of working with the original problem directly, we propose to consider the auxiliary formulation of the major player's problem under entropy regularization, in which the entropy term of a randomized policy is added into the reward functional of the major player (see Definition \ref{def:regul}). Drawing the merit from the entropy regularization formulation that the optimal solution for the major player is unique and admits an explicit form of Gibbs-measure, we are allowed to exercise a linear programming approach to tackle the minor players' optimal stopping problem by considering a simplified auxiliary set-valued mapping. The existence of a regularized equilibrium (see Definition \ref{def:regul}) is then established by applying the Kakutani–Fan–Glicksberg fixed-point theorem for the auxiliary mapping; see Theorem \ref{them:regular_existence}. Next, we approximate the original problem by a sequence of its entropy-regularized counterparts with a vanishing regularization parameter $\lambda \rightarrow 0$, and show that the limit of the sequence of regularized equilibria is a fixed point of the original set-valued operator and it is therefore a relaxed equilibrium in the original major-minor MFG; see Theorem \ref{thm:existencerelaxed}. { Moreover, to enhance the applicability of the entropy regularization method in a more general context, we also study in Section \ref{sect:6} a class of major-minor MFG where the minor players choose optimal controls, for which we extend the method of the vanishing entropy regularization to confirm the existence of relaxed equilibrium. 
}

To summarize, the contributions of the present paper are three-fold: Firstly, our work, to the best of our knowledge, is the first attempt to study major-minor MFG in the context of optimal stopping, whereas existing literature mainly focuses on optimal control problems. Secondly, we extend the linear programming framework (see for instance \cite{Peter2023,guo2022optimization,guo2024mf}) to a more general setting by incorporating the major player's influence on all minor players' decision making and allowing for measure-dependent transition kernels. Thirdly, we develop a method by combining the entropy regularization and linear programming techniques to establish the existence of relaxed equilibria in major-minor MFG based on the vanishing regularization arguments. We highlight that the entropy regularization plays a key role to select a unique solution to the major player's problem, which successfully eliminates the technical issues arising from the direct fixed point argument for the original problem. Note that, although the entropy regularization has been recently applied to facilitate the existence of fixed point in some time-inconsistent control problems (see, e.g., \cite{erhan2024, yu2024}), our attempt to fuse this idea into 
the proof of existence of relaxed equilibrium for major-minor MFG of stopping is novel and may motivate some related future studies.  

The rest of the paper is organized as follows. Section \ref{sect:pf} introduces the problem formulation of the major-minor MFG of stopping and the definition of relaxed equilibria. In addition, the auxiliary problem under entropy regularization is also formulated therein together with the definition of regularized equilibria. Section \ref{sect:technical assumptions} details the technical assumptions required for our analysis. 
The existence of regularized equilibria for the auxiliary problem is proved in Section \ref{sect:regul}. The existence of relaxed equilibria for the original MFG problem is established in Section \ref{sect:relaxed} via vanishing entropy regularization. 
 {In Section \ref{sect:6}, we generalize the vanishing entropy regularization method to solve a major-minor MFG where minor players face optimal control problems. Finally, the paper ends with an appendix consisting of two parts: Appendix \ref{appendix:proof} contains the proofs of several lemmas in the main body of the paper, and Appendix \ref{appendixB} presents a related auxiliary result.}

\paragraph{Notations}
Throughout this paper, we equip any finite set $E$,  with the discrete metric $d^0$. For a topological space \( (E, \tau) \), we denote by \( \mathcal{B}(E) \) the Borel \( \sigma \)-algebra, by \( \mathcal{M}^s(E) \) the set of Borel finite signed measures on \( E \), by \( \mathcal{M}(E) \) the set of Borel finite positive measures on \( E \), by \( \mathcal{P}^{\text{sub}}(E) \) the set of Borel subprobability measures on \( E \) and by \( \mathcal{P}(E) \) the set of Borel probability measures on \( E \). For any $\rho\in \cP(E)$, we use  \( \operatorname{supp}(\rho) \) to stand for the support of \( \rho\).  We denote by \( M(E) \) the set of Borel measurable functions from \( E \) to \( \mathbb{R} \), by \( M_b(E) \) the subset of Borel measurable and bounded functions, by \( C(E) \) the subset of continuous functions, and by \( C_b(E) \) the subset of continuous and bounded functions. The set \( M_b(E) \) is endowed with the supremum norm \( \|\varphi\|_{\infty} = \sup_{x \in E} |\varphi(x)| \).   We use $\cY^\cX$ to denote the set of functions from $\cX$ to $\cY$ and use $\delta_{x}$ to denote the Dirac measure at $x$. { We use $P(de)$ to denote the probability measure on a topological space \( (E, \tau) \). When \( E \) is finite, we sometimes write it as $P(e')$, which should not cause any confusion.}

\section{Problem Formulation}\label{sect:pf}
 Let \( \mathbb{T} = \{0, 1, \dots, T\} \) be the set of discrete time indices and \( \mathbb{T}^* = \{1, 2, \dots, T\} \). We consider a stochastic process \((X^0, X)\) on a filtered probability space \( (\Omega, \mathcal{F}, \mathbb{F}, \mathbb{P}) \), where \( X^0 = (X^0_t)_{t \in \mathbb{T}} \)  represents the major player's state, taking values in \( S^0 \), a finite set with a discrete metric \( d^0 \), and \( X = (X_t)_{t \in \mathbb{T}} \) stands for the state of the minor players, taking values in a nonempty compact metric space \((S, d)\). 

Let \( A \) be a compact action space for the major player, with two primary cases of interest: (1) \( A \subset \mathbb{R}^\ell \), for a fixed \( \ell \in \mathbb{N} \) with \( \text{Leb}(A) > 0 \); and (2) \( A = \{0, 1\} \)\footnote{The argument in this paper can be directly extended to the case where \( A \) is a finite set.}, which covers to the scenario of an optimal stopping. 
All minor players in the large population need to solve optimal stopping problems. The precise formulations of the optimization problems for major and minor players will be given later.

In this paper, we allow the major player's state process \( X^0 \) to be non-Markovian by lifting the state space to the space of all trajectories, a technique also employed in \cite{Peter2023,kurtz_martingale_1998}. The trajectory of \( X^0 \) up to time \( t \) is described by a finite-dimensional matrix-valued process \( U_t \). Define \(\mathcal{I} := \{1, \dots, |S^0|\}\), where \( S^0 = \{z_j : j \in \mathcal{I}\} \) is the finite state space of \( X^0 \), with distinct elements \( z_j \neq z_{j'} \) for \( j \neq j' \), \((j, j') \in \mathcal{I}^2\). The process \( U \), taking values in \( W := \{0,1\}^{(T+1) \times |S^0|} \) (the space of \((T+1) \times |S^0|\)-dimensional binary matrices), is defined as follows:  
\begin{equation}\label{path}
    U_t(\omega)(s+1, j) = 1_{s \leq t} 1_{X^0_s(\omega) = z_j}, \quad (s, j) \in \mathbb{T} \times \mathcal{I}, \quad t \in \mathbb{T}, \quad \omega \in \Omega,
\end{equation}
where \( \mathbb{T} = \{0, \dots, T\} \).
Alternatively, for \( s \in \mathbb{T} \) and \( z \in S^0 \), let \( M[s, z] \) denote the \((T+1) \times |S^0|\)-dimensional matrix given by
\[
M[s, z](r+1, j) = 1_{s = r} 1_{z = z_j}, \quad (r, j) \in \mathbb{T} \times \mathcal{I}.
\]
Then, for \( t \in \mathbb{T} \), we have 
\begin{equation}\label{UX}
	 U_t = \sum_{s=0}^t M[s, X^0_s].
\end{equation}
Define a function \( \Psi_t: W \rightarrow S^0 \) by
\[
\Psi_t(u) := \sum_{j \in \mathcal{I}} z_j 1_{u(t+1, j) = 1}, \quad u \in W.
\]
It is easy to see that
\begin{equation}\label{XU}
	 X^0_s = \Psi_s(U_t),\quad \forall s \leq t\,\ \text{and}\,\  t \in \mathbb{T} ,
\end{equation}
 hence  the information about
the entire trajectory of $X^0$ up to time $t$ is encoded  in \( U_t \).  Moreover, we can conclude that $\sigma(U_t)=\cF^{X^0}_{t}=\cF^{U}_{t}$.

\subsection{Transition Law}
We next specify the transition law of the major and minor players in our setting. 
The initial states are drawn from distributions \(\tilde{P}^0_0 \in \mathcal{P}(S^0)\) and \(P_0 \in \mathcal{P}(S)\). For \(k \in \mathbb{T} \setminus \{T\}\), the transitions are governed by transition kernels:
\begin{align*}
	&X_{k+1}^0 \sim \tilde{P}^0_{k+1}(U_k, \alpha_k, m_k, dx^0), \\
	&X_{k+1} \sim P_{k+1}(X_k, U_k, m_k, dx).
\end{align*}
Here, \(\alpha_k: W \to \mathcal{P}(A)\) is the major player's relaxed feedback control, and \( \mathcal{A} \) denotes the set of all such control sequences. The term \(m_k: W \to \mathcal{P}^{\text{sub}}(S)\) is the mean-field interaction term, representing the conditional distribution of minor players that remain active at time \(k\), given the information of the major player \(\mathcal{F}^U_k\).

To overcome the issue of non-Markovian assumption, we work with \( U \) rather than \( X^0 \). By \eqref{UX}, we obtain  
\begin{align*}
	&U_{k+1} \sim P^0_{k+1}\left( U_k, \alpha_k, m_k, u' \right):=\sum_{x^0\in S}1_{U_k+M(k+1,x^0)=u'}\tilde{P}^0_{k+1} \left( U_k, \alpha_k, m_{k}, x^0\right), \quad k \in \mathbb{T} \setminus \{T\}, \\& U_0 \sim P_0^0(u):=\tilde{P}^0_0(M(0,X_0^0)=u).
\end{align*}
 It is evident that all properties of \(\tilde{P}^0(u,a,m,u')\) with respect to the variables \((a,m)\) are inherited from \(P^0\). 
  The dynamics of the joint process \((U,X)\) under a relaxed control \(\alpha \in \mathcal{A}\) and mean-field term \(m\) are thus described by the kernels:
  \begin{equation}\label{transition1}
  	\begin{split}
  			&U_{k+1}\sim \pi^0_{k+1}(U_{k},du';\alpha_k(U_{k}),m_k(U_{k})):= \int_{A}P^0_{k+1}(U_{k},a,m_{k}(U_{k}),du')\alpha_{k}(U_{k})(da),\\
  		&X_{k+1}\sim P_{k+1}(X_{k},U_{k},m_{k}(U_{k}),dx').
  	\end{split}
  \end{equation}
\subsection{Objective Functionals}
The major player's objective is to maximize a reward functional that depends on their actions and two mean-field terms: \( m = (m_k)_{k \in \mathbb{T}\setminus\{T\}} \) representing the conditional distribution of  active minor players, and \( \mu = (\mu_k)_{k \in \mathbb{T}} \) representing the conditional distribution of stopped minor players. For an admissible control policy $\alpha = (\alpha_t)_{t \in \mathbb{T}\setminus\{T\}}\in\mathcal{A}$, the reward functional is
	\begin{equation*}
J^0(\alpha; \mu, m) = \mathbb{E} \left[ \sum_{t=0}^{T-1} \int_{A} f_{t}^0(\Psi_t(U_{t}), a, m_t(U_t)) \, \alpha_{t}(U_t)(da) + g^0(\Psi_T(U_T), \mu_T(U_T)) \right].
\end{equation*}
Here $f^0_t : S^0 \times A \times \cP^{\mathrm{sub}}(S) \to \mathbb{R}$ denotes the running reward at time $t$, and $g^0 : S^0 \times \cP^{\mathrm{sub}}(S) \to \mathbb{R}$ denotes the terminal reward.

Notably, this framework can accommodate the optimal stopping feature with additional assumptions. To wit, we introduce an enlarged state space \( \bar{S}^0 = S^0 \cup \{\triangle\} \), where \( \triangle \) represents the stopped state. We also consider the action space \( A = \{0, 1\} \), where \( a = 1 \) indicates the immediate stopping and \( a = 0 \) indicates the continuation. In this setup, we assume that the functions \( \tilde{P}^0 \), \( f^0 \), and \( g^0 \) satisfy the following conditions:
\begin{equation*}
	\begin{split}
	    &\operatorname{supp}(\tilde{P}_{t+1}^0(u, 0, m,dx))\in S^0, \quad \forall (t, u, m) \in \mathbb{T}\setminus\{T\} \times W \times \mathcal{P}^{\text{sub}}(S),\\
		&\tilde{P}_{t+1}^0(u, 1, m,dx) = \delta_{\triangle}, \quad \forall (t, u, m) \in \mathbb{T}\setminus\{T\} \times W \times \mathcal{P}^{\text{sub}}(S),\\
		&\tilde{P}_{t+1}^0(u, a, m,dx) = \delta_{\triangle}, \quad \forall (t, a, m) \in \mathbb{T}\setminus\{T\} \times A \times \mathcal{P}^{\text{sub}}(S)\,\text{and}\, \Psi_t(u)=\Delta,\\
		&{f^0_{t}(\triangle, 1, m) \geq f^0_{t}(\triangle, 0, m), \quad \forall (t, m) \in \mathbb{T}\setminus\{T\} \times \mathcal{P}^{\text{sub}}(S)},\\
		&{g^0(x, \mu)=0, \quad \forall (x, \mu) \in \bar{S}^0 \times \mathcal{P}^{\text{sub}}(S)\text{\footnotemark},}\\
	\end{split}
\end{equation*}
where  the first two conditions ensure that the major player exits the game (i.e., reaches state \( \Delta \)) if and only if the action \( a = 1 \). The third condition specifies that once the major player exits the game, it is irreversible (i.e., \( \Psi_t(u) = \Delta \) implies the process stays at \( \Delta \)). {The fourth condition incentivizes the major player such that, if she exits the game before the terminal period \( T \), she will not change her action again.} 
In this scenario, the  stopping policy is given by $\tau^{0}:=\inf\{t\in \mathbb{T}: \alpha_t=1\}\wedge T$ with the convention that $\inf\emptyset=\infty$.
\footnotetext{Here, for ease of presentation, we set $g = 0$. In fact, alternative assumptions can be made to account for cases where the major player receives nonzero payoffs when she decides to stop, as our proof does not depend on these specific assumptions.}

The reward for a representative minor player depends on their chosen stopping time \( \tau \in \mathcal{T} \), where \(\mathcal{T}\) is the set of \(\mathbb{F}\)-stopping times valued in \(\mathbb{T}\). The objective is to maximize:
\begin{equation*}
 J^1(\tau; \mu, m) = \mathbb{E} \left[ \sum_{t=0}^{\tau - 1} f_t(X_t, U_t, m_t(U_t)) + g_{\tau} \left( X_{\tau}, U_{\tau}, \mu_{\tau}(U_{\tau}) \right) \right].
\end{equation*}
Here
\(
    f_t : S \times W \times \mathcal{P}^{\text{sub}}(S) \to \mathbb{R}
\) denotes the continuation reward at time \(t\),
and 
\(
    g_t : S \times W \times \mathcal{P}^{\text{sub}}(S) \to \mathbb{R}
\)
denotes the stopping reward at time \(t\).
\subsection{Linear Programming Formulation for the Minor Player's Problem}
The minor player's optimal stopping problem can be reformulated using a linear programming approach over occupation measures, following \cite{2020tankovsicon}. First, we can conclude from \eqref{transition1} that the joint process \((X,U)\) is a  Markov chain on \(S \times W\) with the transition kernels 
\begin{equation}\label{transitionkernal}
    \pi(x,u;\alpha,m;dx',du'):=\left(\pi_t(x,u;\alpha_{t-1}(u),m_{t-1}(u);dx',du')\right)_{t\in \mathbb{T}^*}
\end{equation} 
specified by 
\begin{equation*}
	\pi_t(x,u;\alpha_{t-1}(u),m_{t-1}(u);dx',du')=P_{t}(x,u,m_{t-1}(u),dx')\pi^0_{t}(u,du';\alpha_{t-1}(u),m_{t-1}(u)).
\end{equation*}
 Let \( (\Omega, \mathcal{F}, \mathbb{F}, \mathbb{P}) \) be a complete filtered probability space such that  $(X,U)$ is an $\mF$-Markov chain with these kernels.
For any \(\mathbb{F}\)-stopping time \(\tau\) valued in \(\mathbb{T}\), we define two sequences of occupation measures:
\begin{align*}
	&\tilde{m}_t(B\times \{u\}) := \mathbb{P}[X_t \in B, U_t=u, t < \tau ],\quad  \quad B \in \mathcal{B}(S),\quad u\in W, \quad t \in \mathbb{T} \setminus \{T\},\\
	&\tilde{\mu}_t(B\times \{u\}) := \mathbb{P}[X_t \in B,U_t=u,\tau= t ], \quad  \quad B \in \mathcal{B}(S), \quad u\in W,  \quad t \in \mathbb{T}.
\end{align*}

\subsubsection{Derivation of the constraint}\label{Derivation of the constraint}
As $(X,U)$ is an $\mF$-Markov chain with transition kernels $\pi(x,u;\alpha,m; dx',du')$, for any $\varphi \in C_b(\mathbb{T} \times S \times W)$ we have
\[
\mathbb{E}[\varphi(t + 1, X_{t+1}, U_{t+1}) - \varphi(t, X_t, U_t) | \mathcal{F}_t] = \mathcal{L}(\varphi)(t, X_t, U_t;\alpha_{t}(U_t),m_{t}(U_t)), \quad \text{a.s.}
\]
where for $(t, x, u) \in \mathbb{T} \setminus \{T\} \times S \times W$,
\[
\mathcal{L}(\varphi)(t, x, u;\alpha_{t}(u),m_{t}(u)) := \int_{S \times W} [\varphi(t + 1, x', u') - \varphi(t, x, u)] \pi_{t+1}(x, u;\alpha_{t}(u),m_{t}(u); dx', du').
\]
Moreover, the process $\mathcal{M}(\varphi)$ defined by $\mathcal{M}_0(\varphi) = \varphi(0, X_0, U_0)$ and
\[
\mathcal{M}_t(\varphi) = \varphi(t, X_t, U_t) - \sum_{s=0}^{t-1} \mathcal{L}(\varphi)(s, X_s, U_s;\alpha_{s}(U_s),m_{s}(U_s)), \quad t \in \mathbb{T}^*,
\]
is an $\mathbb{F}$-martingale. In particular, for any \(\mathbb{F}\)-stopping time $\tau$, the Optional Sampling Theorem implies
\[
\mathbb{E}[\varphi(\tau, X_\tau, U_\tau)] = \mathbb{E}[\varphi(0, X_0, U_0)] + \mathbb{E} \left[ \sum_{t=0}^{\tau-1} \mathcal{L}(\varphi)(t, X_t, U_t;\alpha_{t}(U_t),m_{t}(U_t)) \right].
\]
We deduce that the occupation measures satisfy the following constraint
\begin{align*}
	\sum_{t=0}^{T} \int_{S\times W} \varphi(t, x, u) \tilde{\mu}_t(dx,du)
	&= \int_{S\times W} \varphi(0, x, u) m_0^*(dx,du) \\&+ \sum_{t=0}^{T-1}  \int_{S\times W} \mathcal{L}(\varphi)(t, x, u;\alpha_t(u),m_{t}(u)) \tilde{m}_t(dx,du),
\end{align*}
for any $\varphi \in C_b(\mathbb{T} \times S \times W)$,  
where \(m_0^*(B \times \{u\}) \coloneqq \mathbb{P}(X_0 \in B, U_0 = u)\) is the initial joint distribution.

For notational convenience, let us define sets \( \mathcal{C}_\mu := \prod_{t \in \mathbb{T}} \mathcal{P}^{\text{sub}}(S)^{W} \), \( \mathcal{C}_m := \prod_{t \in \mathbb{T} \setminus \{T\}} \mathcal{P}^{\text{sub}}(S)^{W} \) and $\mathcal{V}:=\prod_{t\in\mathbb{T}}\cP^{sub}(S\times W)\times\prod_{t\in\mathbb{T}\setminus\{T\}}\cP^{sub}(S\times W)$.
{\begin{definition}[Set of Admissible Occupation Measures]\label{constraint}
	 For a fixed tuple \( (\mu, m, \alpha) \in \mathcal{C}_{\mu} \times \mathcal{C}_m \times \mathcal{A} \), the set of admissible occupation measures, \(\mathcal{R}[m; \alpha]\), is the set of pairs \( (\tilde{\mu}, \tilde{m}) \in \mathcal{V} \) such that for all test functions \( \varphi \in C_b(\mathbb{T} \times S \times W) \),
		\begin{align*}
			\sum_{t=0}^{T}\int_{S\times W} \varphi(t, x, u) \, \tilde{\mu}_t(dx,du)
			&=  \int_{S\times W} \varphi(0, x, u) \, m_0^*(dx,du) \\ &+ \sum_{t=0}^{T-1} \int_{S\times W} \mathcal{L}(\varphi)(t, x, u; \alpha_t(u), m_t(u)) \, \tilde{m}_t(dx,du),
		\end{align*}
    where \(m_0^*(B \times \{u\}) \coloneqq \mathbb{P}(X_0 \in B, U_0 = u)\) is the initial joint distribution.
\end{definition}}
{\begin{remark}
    For any fixed tuple \( (\mu, m, \alpha) \in \mathcal{C}_{\mu} \times \mathcal{C}_m \times \mathcal{A} \), it follows from Theorem~18 in \cite{Peter2023} that the set $\mathcal{R}[m;\alpha]$ characterizes all occupation measures associated with randomized stopping times, in the sense that for every  $(\tilde{\mu}, \tilde{m})\in\mathcal{R}[m;\alpha]$, there exists a complete filtered probability space $( \bar{\Omega}, \overline{\mathcal{F}}, \overline{\mathbb{F}}, \overline{\mathbb{P}})$ supporting the
    random variables $(\bar{\tau}, \bar{X}, \bar{U})$ such that 
    \begin{itemize}
        \item $(\bar{X}, \bar{U})$ is an $\overline{\mathbb{F}}$-Markov chain valued in $S \times W$ with transition kernels $\pi$ defined in \eqref{transitionkernal} and initial distribution $m_0^*$.
        \item $\bar{\tau}$ is an $\overline{\mathbb{F}}$-stopping time valued in $\mathbb{T}$.
        \item  The measures admit the following representations:
        \[
        \begin{aligned}
        & \tilde{m}_t(B\times \{u\})=\overline{\mathbb{P}}\!\left(\bar{X}_t \in B,\,  \bar{U}_t=u,  t<\bar{\tau} \right), \quad B \in \mathcal{B}(S), \; u \in W, \; t \in \mathbb{T}\setminus\{T\},  \\
        & \tilde{\mu}_t(B\times \{u\})=\overline{\mathbb{P}}\!\left(\bar{X}_t \in B,\, \bar{U}_t=u,\, \bar{\tau}=t \right), \quad B \in \mathcal{B}(S),  \; u \in W,\; t \in \mathbb{T}.
        \end{aligned}
        \]
    \end{itemize}
\end{remark}
}The minor player's problem can then be expressed as a linear programming problem over the set of admissible occupation measures.

{\begin{definition}[LP optimization criteria]\label{def:lpop}
	For \((\mu, m, \alpha) \in \mathcal{C}_{\mu} \times \mathcal{C}_m \times \mathcal{A}\), the reward functional for the minor player, \(J^1\), associated with an occupation measure pair \((\tilde{\mu}, \tilde{m}) \in \mathcal{R}[m;\alpha]\), is defined by
	\begin{align*}
		J^1(\tilde{\mu}, \tilde{m}; \mu, m) \coloneqq \sum_{t=0}^{T-1} \int_{S\times W} f_t(x, u, m_t(u)) \, \tilde{m}_t(dx,du) + \sum_{t=0}^{T} \int_{S\times W} g_t(x, u, \mu_t(u)) \, \tilde{\mu}_t(dx,du).
	\end{align*}
\end{definition}}
\subsection{Major-Minor mean field equilibrium}
Our goal is to examine the existence of the relaxed equilibrium for the major-minor mean field game. This is achieved by formulating the equilibrium conditions as a fixed-point problem.

To begin, we formally define the marginal law of the major player's process, which plays a central role in the consistency condition.
 For any given $(\alpha,m) \in \mathcal{A}\times \mathcal{C}_m$, the distribution $p_t(\cdot; \alpha, m) \in \mathcal{P}(W)$ of the major player's state $U_t$ can be determined recursively as
\begin{align*}
    &p_0(du; \alpha, m) = \mathbb{P}(U_0 \in du), \\
    &p_{t+1}(du; \alpha, m) = \int_{W} \pi^0_{t+1}(u', du; \alpha_t(u'), m_t(u')) \, p_t(du'; \alpha, m), \quad \text{for } t \in \mathbb{T}\setminus\{T\}.
\end{align*}
{\begin{definition}[Relaxed Equilibrium]\label{def:relaxed}
A relaxed equilibrium is a tuple \((\mu^*, m^*, \alpha^*, (\tilde{\mu}^*, \tilde{m}^*))\) that satisfies the following three conditions simultaneously:

\begin{enumerate}
    \item  Given the mean-field terms \((\mu^*, m^*)\), the major player's relaxed control \(\alpha^* \in \mathcal{A}\) attains the optimality: 
    \[ \alpha^* \in \mathbb{A}(\mu^*, m^*) \coloneqq \operatorname*{arg\,max}_{\alpha \in \mathcal{A}} J^0(\alpha; \mu^*, m^*). \]

    \item  Given \((\mu^*, m^*, \alpha^*)\), the  occupation measure pair \((\tilde{\mu}^*, \tilde{m}^*) \in \mathcal{V}\) attains the optimality: 
    \[ (\tilde{\mu}^*, \tilde{m}^*) \in \hat{\Theta}(\mu^*, m^*, \alpha^*) \coloneqq \operatorname*{arg\,max}_{(\tilde{\mu}, \tilde{m}) \in \mathcal{R}[m^*; \alpha^*]} J^1(\tilde{\mu}, \tilde{m}; \mu^*, m^*). \]

    \item  The mean-field terms \((\mu^*, m^*)\) must coincide with the conditional distribution of the stopped and active minor players, given the major player's information. This consistency is defined via disintegration with respect to the major player's law, $p_t^*(\cdot) \coloneqq p_t(\cdot; \alpha^*, m^*)$, such that
    \begin{align*}
        \tilde{\mu}^*_t(dx, du) &= \mu^*_t(u)(dx) \, p^*_t(du) \quad \text{for } t \in \mathbb{T}, \\
        \tilde{m}^*_t(dx, du) &= m^*_t(u)(dx) \, p^*_t(du) \quad \text{for } t \in \mathbb{T}\setminus\{T\}.
    \end{align*}
\end{enumerate}
To recast these conditions concisely as a fixed-point problem, we introduce the \emph{conditioning mapping} $\Gamma: \mathcal{V} \times \mathcal{A} \times \mathcal{C}_m \to 2^{\mathcal{C}_\mu \times \mathcal{C}_m}$. Precisely, for any inputs $((\tilde{\mu}, \tilde{m}), \alpha, m)$, we define $\Gamma((\tilde{\mu}, \tilde{m}), \alpha, m)$ as the set of all pairs $(\mu', m')$ such that:
\begin{align*}
    \tilde{\mu}_t(dx, du) &= \mu'_t(u)(dx) \, p_t(du; \alpha, m) \quad \text{for } t \in \mathbb{T}, \\
    \tilde{m}_t(dx, du) &= m'_t(u)(dx) \, p_t(du; \alpha, m) \quad \text{for } t \in \mathbb{T}\setminus\{T\}.
\end{align*}
The equilibrium fixed-point mapping $\Phi: \mathcal{C}_\mu \times \mathcal{C}_m \to 2^{\mathcal{C}_\mu \times \mathcal{C}_m}$ is then defined by:
\[
    \Phi(\mu, m) \coloneqq \bigcup_{\alpha \in \mathbb{A}(\mu, m)} \bigcup_{(\tilde{\mu}, \tilde{m}) \in \hat{\Theta}(\mu, m, \alpha)} \Gamma\big((\tilde{\mu}, \tilde{m}), \alpha, m\big).
\]
A pair of mean field flows \((\mu^*, m^*)\) induces a relaxed equilibrium if and only if it is a fixed point of this mapping, i.e., \((\mu^*, m^*) \in \Phi(\mu^*, m^*)\).
\end{definition}}
The existence of a fixed point for the mapping $\Phi$ is typically established via the Kakutani-Fan-Glicksberg's fixed point theorem; see, for example, Corollary 17.55 in \cite{aliprantis06}, which requires the mapping to have nonempty \emph{convex values}. The set $\hat{\Theta}(\mu, m, \alpha)$ is convex for any fixed $(\mu, m, \alpha)$ due to the linearity of the objective functional $J^1$. Furthermore, as we will prove in Lemma~\ref{lem:properties_gamma_theta} below, the image of this set under the conditioning mapping, $\Gamma(\hat{\Theta}(\mu, m, \alpha), \alpha, m)$, is also convex. This convexity is a crucial property that holds whenever the major player's strategy is fixed.

However, the fixed-point mapping $\Phi$ may  fail to be convex-valued. The source of this failure is the potential non-uniqueness of the major player's best response $\alpha \in \mathbb{A}(\mu, m)$. Since $\Phi$ is constructed by taking the union of these convex response sets over all such optimal controls $\alpha$, this operation can yield a non-convex image, as the following example demonstrates.

\begin{example}
We construct a simple stopping game without mean field interaction to illustrate this issue, focusing on the crucial role of the conditioning mapping $\Gamma$.

\begin{enumerate}
    \item \textbf{Setup: The Game and Major Player's Strategy.}
    Let $T=2$. The major player (MP) chooses a stopping time $\tau^0 \in \{0, 1, 2\}$ to maximize their reward $G_{\tau^0}$, where $G_0 = 0$, $G_1 = 1$, and $G_2 = 1$. The minor player (mP) chooses $\tau \in \{0, 1, 2\}$ to maximize her reward $H_\tau$, where $H_0 = 1/2$, $H_1 = \mathbf{1}_{\{\tau^0=1\}}$, and $H_2 = 1/3$. There are no mean-field interactions.

    The MP's optimal reward is 1. An optimal relaxed strategy for the MP corresponds to a probability distribution $(a_0, a_1, a_2)$ over the stopping times $\{0, 1, 2\}$. To achieve the maximum reward, the MP must stop at $t=0$ with probability zero ($a_0=0$). Any distribution of the form $(0, p, 1-p)$ for $p \in [0,1]$ is optimal. The set of optimal controls is thus $\mathbb{A} = \{\alpha^p \mid p \in [0,1]\}$, where $\alpha^p$ induces the stopping distribution $(0, p, 1-p)$.

    This set of controls generates three potential MP paths:
    \begin{itemize}
        \item Path $u^0$: MP stops at $t=0$. (Occurs with probability $a_0=0$ under any optimal $\alpha$).
        \item Path $u^1$: MP continues at $t=0$, stops at $t=1$. (Occurs with probability $p$).
        \item Path $u^2$: MP continues at $t=0$, stops at $t=2$. (Occurs with probability $1-p$).
    \end{itemize}

    \item \textbf{The Role of $\Gamma$:}
   The mP's policy, represented by the output of \(\Gamma\), is adapted to the realization of the major player's strategy;
 specifically, it is a function \(u \mapsto \mu(u)\) that specifies the mP's conditional stopping distribution $(\mu_0, \mu_1, \mu_2)$ for each potential MP path $u \in \{u^0, u^1, u^2\}$.

    \item \textbf{Constructing Two Distinct Policies in $\Phi$.} We exploit the freedom in defining policies on zero-probability paths.
    \begin{itemize}
        \item \textbf{Case 1: MP plays $\alpha^0$ (deterministic stop at $t=2$).}
        This is an optimal strategy for the MP ($p=0$). The MP's marginal law is a point mass on path $u^2$. On this realized path, the mP's rewards are $(1/2, 0, 1/3)$, so their unique optimal action is to stop at $t=0$. The conditioning $\Gamma$ fixes the policy on path $u^2$ to be $\mu(u^2) = (1,0,0)$. The policies on the zero-probability paths $u^0$ and $u^1$ are unconstrained. We make a specific choice for them: $\mu(u^0) = \mu(u^1) = (1,0,0)$. This yields a complete policy function $\mu^*$ which is a valid best response, so $\mu^* \in \Phi$.

        \item \textbf{Case 2: MP plays $\alpha^1$ (deterministic stop at $t=1$).}
        This is also an optimal strategy ($p=1$). The MP's marginal law is a point mass on $u^1$. On this path, the mP's rewards are $(1/2, 1, 1/3)$, so their unique optimal action is to stop at $t=1$. $\Gamma$ fixes $\mu(u^1) = (0,1,0)$. The policies on the zero-probability paths $u^0$ and $u^2$ are unconstrained. We choose them to be $\mu(u^0) = \mu(u^2) = (0,1,0)$. This yields a second policy $\mu^{**}$, which is also in $\Phi$.
    \end{itemize}

    \item \textbf{The Convex Combination and Contradiction.}
    Both $\mu^*$ and $\mu^{**}$ belong to $\Phi$. If $\Phi$ were convex,  $\hat{\mu} \coloneqq \frac{1}{2}\mu^* + \frac{1}{2}\mu^{**}$ must also be in $\Phi$. This combined policy prescribes a specific stopping distribution on each path:
    \[ \hat{\mu}(u^0) = (1/2, 1/2, 0), \quad \hat{\mu}(u^1) = (1/2, 1/2, 0), \quad \hat{\mu}(u^2) = (1/2, 1/2, 0). \]
    For $\hat{\mu}$ to be in $\Phi$, it must be a best response to some optimal MP strategy $\alpha^p$ for $p \in [0,1]$.
    
    Consider any $\alpha^p$ with $p \in (0,1)$. Under this strategy, paths $u^1$ and $u^2$ occur with positive probability, while $u^0$ occurs with zero probability. An optimal adapted policy must prescribe an optimal action on every path that can occur.
    \begin{itemize}
        \item Along path $u^1$ (which occurs with prob. $p > 0$), the optimal mP action is to stop at $t=1$, requiring the policy component $(0,1,0)$.
        \item Along path $u^2$ (which occurs with prob. $1-p > 0$), the optimal mP action is to stop at $t=0$, requiring the policy component $(1,0,0)$.
    \end{itemize}
    The candidate policy $\hat{\mu}$ is suboptimal on both of these realized paths, since $\hat{\mu}(u^1) \neq (0,1,0)$ and $\hat{\mu}(u^2) \neq (1,0,0)$. Therefore, $\hat{\mu}$ is not a best response to any mixed strategy $\alpha^p$ with $p \in (0,1)$. (It is also not a best response to the deterministic strategies $\alpha^0$ or $\alpha^1$).

    Since $\hat{\mu}$ is not in $\Phi$, the set $\Phi$ is not convex.
\end{enumerate}
\end{example}
This failure prevents a direct application of Kakutani-Fan-Glicksberg's fixed point theorem and is the primary motivation for introducing entropy regularization.
Despite the potential non-convexity of $\Phi$, the inner component of the mapping, corresponding to a fixed major player strategy, does possess the desired convexity property. 
{\begin{lemma}\label{lem:properties_gamma_theta}
For any fixed tuple \((\mu, m, \alpha)\) such that the set \(\hat{\Theta}(\mu, m, \alpha)\) is nonempty, the image set
    \[ \Gamma\big(\hat{\Theta}(\mu, m, \alpha), \alpha, m\big) \coloneqq \bigcup_{(\tilde{\mu}, \tilde{m}) \in \hat{\Theta}(\mu, m, \alpha)} \Gamma\big((\tilde{\mu}, \tilde{m}), \alpha, m\big) \]
    is a nonempty convex subset of \(\mathcal{C}_\mu \times \mathcal{C}_m\).
\end{lemma}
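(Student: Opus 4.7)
The plan is to exploit two linearity facts: first, that both the LP feasibility constraint in Definition~\ref{constraint} and the reward $J^1$ are affine in $(\tilde\mu,\tilde m)$, so $\hat\Theta(\mu,m,\alpha)$ is convex; second, that the disintegration defining $\Gamma$, namely $\tilde\mu_t(dx,du)=\mu'_t(u)(dx)\,p_t(du;\alpha,m)$, is affine in the ``density'' $\mu'$ once the base marginal $p_t(\cdot;\alpha,m)$ is fixed. Combining the two will reduce convexity of the image to convexity of $\hat\Theta$.

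\textbf{Nonemptiness.} First I would fix an arbitrary $(\tilde\mu,\tilde m)\in\hat\Theta(\mu,m,\alpha)$ and build a disintegration pointwise in $u\in W$. Since $W$ is finite, this reduces to an atomic formula: for each $u$ with $p_t(\{u\};\alpha,m)>0$ set $\mu'_t(u)(B):=\tilde\mu_t(B\times\{u\})/p_t(\{u\};\alpha,m)$, and for $u$ with $p_t(\{u\};\alpha,m)=0$ pick $\mu'_t(u)$ to be any element of $\mathcal P^{\mathrm{sub}}(S)$, for instance the zero measure. The mass bound $\tilde\mu_t(S\times\{u\})\le p_t(\{u\};\alpha,m)$ needed to ensure $\mu'_t(u)\in\mathcal P^{\mathrm{sub}}(S)$ will be read off from the probabilistic realization in the Remark after Definition~\ref{constraint}: the stopping time $\bar\tau$ on $(\bar\Omega,\overline{\mathcal F},\overline{\mathbb F},\overline{\mathbb P})$ gives
\[
\tilde\mu_t(S\times\{u\})=\overline{\mathbb P}(\bar U_t=u,\bar\tau=t)\le\overline{\mathbb P}(\bar U_t=u)=p_t(\{u\};\alpha,m).
\]
The same construction applied to $\tilde m$ produces $m'$, and hence $(\mu',m')\in\Gamma((\tilde\mu,\tilde m),\alpha,m)\subseteq\Gamma(\hat\Theta,\alpha,m)$.

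\textbf{Convexity.} Next I would take $({\mu'}^{(i)},{m'}^{(i)})\in\Gamma((\tilde\mu^{(i)},\tilde m^{(i)}),\alpha,m)$ with $(\tilde\mu^{(i)},\tilde m^{(i)})\in\hat\Theta(\mu,m,\alpha)$ for $i=1,2$, and fix $\lambda\in[0,1]$. The bilinearity of the product $(\mu',p)\mapsto \mu'(u)\,p(du)$ yields the identity
\[
\lambda\tilde\mu^{(1)}_t+(1-\lambda)\tilde\mu^{(2)}_t=\bigl(\lambda{\mu'}^{(1)}_t(u)+(1-\lambda){\mu'}^{(2)}_t(u)\bigr)(dx)\,p_t(du;\alpha,m),
\]
and the analogous identity for $\tilde m$. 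Convexity of $\hat\Theta$ puts the left-hand sides back in $\hat\Theta$, so the pointwise convex combination $(\lambda{\mu'}^{(1)}+(1-\lambda){\mu'}^{(2)},\,\lambda{m'}^{(1)}+(1-\lambda){m'}^{(2)})$ disintegrates an element of $\hat\Theta$ and therefore belongs to $\Gamma(\hat\Theta,\alpha,m)$. It remains in $\mathcal C_\mu\times\mathcal C_m$ because the pointwise convex combination of subprobability measures has total mass bounded by $\lambda+(1-\lambda)=1$.

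\textbf{Main obstacle.} The bulk of the argument is bookkeeping once the bilinearity of disintegration is identified; the only genuinely nontrivial ingredient is the mass bound $\tilde\mu_t(S\times\{u\})\le p_t(\{u\};\alpha,m)$ in the nonemptiness step. I would derive it from the Remark's probabilistic realization rather than attempt a direct manipulation of the test-function constraint in Definition~\ref{constraint}, which would require a delicate inductive argument in $t$.
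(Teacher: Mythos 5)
Your proof is correct, and the convexity argument coincides with the paper's own: fix the marginal \(p_t(\cdot;\alpha,m)\), use convexity of \(\hat{\Theta}(\mu,m,\alpha)\) (linearity of the constraint and of \(J^1\)), and note that the disintegration \(\tilde{\mu}_t(dx,du)=\mu'_t(u)(dx)\,p_t(du;\alpha,m)\) is affine in the kernel, so the pointwise convex combination of two disintegrations disintegrates the corresponding convex combination in \(\hat{\Theta}\). The only difference is that you also spell out nonemptiness of the image via the mass bound \(\tilde{\mu}_t(S\times\{u\})\le p_t(\{u\};\alpha,m)\) drawn from the probabilistic realization of occupation measures, a step the paper's appendix proof leaves implicit; your treatment of it (including the zero-probability states \(u\)) is sound and makes the argument slightly more complete.
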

\begin{proof}
The proof is postponed to Appendix~\ref{appendix:proof}.
\end{proof}}

The non-convexity of the fixed-point mapping $\Phi$ posed a significant obstacle. A potential approach to circumvent this is to select a single best response $\alpha$ from the set $\mathbb{A}(\mu,m)$ and analyze the fixed points of the resulting (now convex-valued) mapping $\Phi_\alpha(\mu,m) = \Gamma(\hat{\Theta}(\mu,m,\alpha),\alpha,m)$. However, it is unclear how to select a suitable \( \alpha \) that exhibits good continuity with respect to \( (\mu, m) \)\footnote{Here, continuity is understood in the weak sense: \( \alpha(\mu^n, m^n)(u) \) converges weakly to \( \alpha(\mu, m)(u) \) pointwise if \( (\mu^n(u), m^n(u)) \) converges weakly to \( (\mu(u), m(u)) \) pointwise.}, in order to ensure the applicability of the fixed point theorem.  According to Michael's Selection Theorem, a continuous selection is guaranteed to exist if the mapping has convex closed values and, crucially, is lower hemicontinuous. As the following example demonstrates, the best-response mapping $\mathbb{A}(\mu,m)$ fails to be lower hemicontinuous precisely at the points where the major player is indifferent between multiple actions.
\begin{example}
Consider a simple one-period ($T=1$) deterministic control problem for the major player with a binary action space $A = \{0, 1\}$ and no terminal reward. The objective is to choose a relaxed control  to maximize:
\[
\mathbb{E} \left[ \int_A f^0(a, m_0(U_0)) \, \alpha_0(U_0)(da) \right].
\]
For a fixed initial state $u \in W$, the optimal control $\alpha^*(u) \in \mathcal{P}(A)$ is any measure supported on $$\operatorname*{arg\,max}_{a \in A} f^0(a, m_0(u)).$$ This leads to:
\begin{itemize}
    \item If $f^0(1, m_0(u)) > f^0(0, m_0(u))$, the unique best response is $\alpha^*(u) = \delta_1$.
    \item If $f^0(1, m_0(u)) < f^0(0, m_0(u))$, the unique best response is $\alpha^*(u) = \delta_0$.
    \item If $f^0(1, m_0(u)) = f^0(0, m_0(u))$, the major player is indifferent, and any relaxed control $\alpha^*(u) = p\delta_1 + (1-p)\delta_0$ for $p \in [0,1]$ is optimal.
\end{itemize}
Now, consider a sequence of mean-field terms $\{m_0^n\}_n$ converging to $m_0$ such that $f^0(1, m_0^n(u)) > f^0(0, m_0^n(u))$ for all $n$, but in the limit, $f^0(1, m_0(u)) = f^0(0, m_0(u))$. For each $n$, the only possible selection is $\alpha^n(u) = \delta_1$. However, at the limit $m_0$, the set of best responses  is the entire simplex $\text{conv}\{\delta_0, \delta_1\}$. The sequence of unique best responses $\delta_1$ converges, but we could select a different limit point, such as $\delta_0$, which is also a valid best response at the limit. This demonstrates that the best-response mapping $\mathbb{A}$ is not lower hemicontinuous, and Michael's Selection Theorem does not apply.
\end{example}
In response to the aforementioned challenges using the direct arguments, we propose an auxiliary problem by adding an entropy regularization in the reward functional of the major player's problem. We first establish the existence of a regularized equilibrium for a simplified fixed-point mapping. Then, by letting the regularization parameter \( \lambda \) tend to 0, we show that the limit is a relaxed equilibrium of the original MFG problem. To this end, {
let us now introduce the Shannon entropy. Let $\mathcal{D}(A)$ be the set of probability distributions on the action space $A$ such that
\begin{itemize}
    \item If $A \subset \mathbb{R}^\ell$ with $\mathrm{Leb}(A) > 0$, $\mathcal{D}(A)$ is the set of probability density functions $\phi: A \to [0,\infty)$ with $\int_A \phi(a)da = 1$. The entropy is $\mathcal{H}(\phi) \coloneqq -\int_A \phi(a)\ln \phi(a)\,da$.
    \item If $A=\{0,1\}$, $\mathcal{D}(A)$ is the set of probability mass functions $\phi: A \to [0,1]$ with $\sum_{a \in A} \phi(a) = 1$. The entropy is $\mathcal{H}(\phi) \coloneqq -\sum_{a \in A} \phi(a)\ln \phi(a)$.
\end{itemize}
In both cases, the Shannon entropy $\mathcal{H}$ is a strictly concave functional on $\mathcal{D}(A)$.
\begin{definition}[Regularized  Controls]\label{def:regular_con}
A feedback relaxed control $\alpha \in \mathcal{A}$ is called a \emph{regularized (relaxed) control} if, for each  $(t,u) \in (\mathbb{T}\setminus\{T\}) \times W$,  the measure $\alpha_t(u)$ admits a density (or mass function) $\alpha_t(u)(\cdot) \in \mathcal{D}(A)$ such that $\mathcal{H}(\alpha_t(u)) > -\infty$.
We denote by $\mathcal{D}$ the set of all regularized controls.
\end{definition}
}
For a fixed regularization parameter $\lambda > 0$, we define the major player's $\lambda$-regularized reward functional as:
\begin{equation}\label{eq:major_regularized_reward}
J^0_{\lambda}(\alpha; \mu, m) \coloneqq J^0(\alpha; \mu, m) + \lambda\,\mathbb{E}\left[ \sum_{t=0}^{T-1} \mathcal{H}(\alpha_t(U_t)) \right].
\end{equation}
The major player's problem is now to maximize $J^0_{\lambda}$ over the set of regularized controls $\mathcal{D}$. Due to the strict concavity of the entropy term, for any given $(\mu, m)$, the regularized problem admits a unique optimal control, which we denote by $\alpha^\lambda(\mu, m)$. 
{\begin{definition}[Regularized Equilibrium]\label{def:regul}
A pair of mean-field flows \((\mu^*, m^*) \in \mathcal{C}_\mu \times \mathcal{C}_m\) is a \emph{$\lambda$-regularized equilibrium} if it is a fixed point of the mapping $\Phi_\lambda: \mathcal{C}_{\mu} \times \mathcal{C}_m \to 2^{\mathcal{C}_{\mu} \times \mathcal{C}_m}$ defined as follows:
\[
    \Phi_\lambda(\mu, m) \coloneqq \Gamma\big(\hat{\Theta}(\mu, m, \alpha^\lambda(\mu,m)), \alpha^\lambda(\mu,m), m\big).
\]
Here, $\alpha^\lambda(\mu,m)$ is the unique solution to the regularized problem \eqref{eq:major_regularized_reward}, and $\hat{\Theta}$ and $\Gamma$ are as defined in Definition \ref{def:relaxed}.
\end{definition}}

\begin{remark}
Both the relaxed and regularized equilibria are strong solutions in the sense of \cite{2014Carmona}, i.e., {in our case the mean-field terms are adapted to the major player's controlled state process}.
\end{remark}
\section{Technical Assumptions}\label{sect:technical assumptions}
\textbf{Topology.}
We equip all spaces of probability or sub-probability measures (e.g., $\mathcal{P}(S)$, $\mathcal{P}(A)$, $\mathcal{P}^{\text{sub}}(S \times W)$) with the \emph{topology of weak convergence}. All product spaces—namely, the space of relaxed controls $\mathcal{A}$, the spaces of mean-field flows $\mathcal{C}_m$ and $\mathcal{C}_\mu$, and the space of occupation measures $\mathcal{V}$—are endowed with the corresponding \emph{product topology}. A sequence $(m^n)$ converges to $m$ in $\mathcal{C}_m$ if $m_t^n(u) \to m_t(u)$ weakly for every pair $(t,u)$; convergence in $\mathcal{A}$, $\mathcal{C}_\mu$, and $\mathcal{V}$ is defined analogously on their respective components. Since the underlying state and action spaces $(S, W, A)$ are compact and the time horizon is finite, it follows from Prokhorov's and Tychonoff's theorems that all the aforementioned product spaces $\mathcal{A}$, $\mathcal{C}_m$, $\mathcal{C}_\mu$, and $\mathcal{V}$ are \emph{compact metric spaces}.

To facilitate our fixed point arguments and the convergence analysis of the vanishing entropy regularization, let us unpack the following standing assumptions imposed on the model. 
{\begin{assumption}\label{a1new:P^0}
\begin{enumerate}[\upshape (i)]
    \item \emph{(Major Player Transition Kernel):} For each fixed $(k,u) \in (\mathbb{T} \setminus \{T\}) \times W$, the stochastic kernel
    \(
        (a, m) \mapsto P^0_{k+1}(u, a, m, du)
    \)
    from $A \times \mathcal{P}^{\text{sub}}(S)$ to $\mathcal{P}(W)$ is continuous; that is, if $(a_n,m_n) \to (a,m)$ in $A \times \cP^{\text{sub}}(S)$, then \[ P^0_{k+1}(u,a_n,m_n,du) \;\rightarrow\; P^0_{k+1}(u,a,m,du) \quad \text{weakly}. \] Moreover, when $\operatorname{Leb}(A) > 0$, the mapping $a \mapsto P^0_{k+1}(u, a, m, u')$ is Lipschitz continuous, uniformly with respect to other arguments.

    \item \emph{(Minor Player Transition Kernel):} For each fixed $(k,u) \in (\mathbb{T} \setminus \{T\}) \times W$, the stochastic kernel
    \(
        (x, m) \mapsto P_{k+1}(x, u, m, dx)
    \)
    from $S \times \mathcal{P}^{\text{sub}}(S)$ to $\mathcal{P}(S)$ is continuous.
\end{enumerate}
\end{assumption}
\begin{remark}\label{remark:pi0}
	 Condition (i) in Assumption \ref{a1new:P^0} implies that for any
		$(k,u,u')\in \mathbb{T}\setminus\{T\}\times W^2$ the mapping $(\alpha,m) \mapsto \pi^0_{k+1}(u,u';\alpha,m)$ from $\mathcal{P}(A) \times \mathcal{P}^{\text{sub}}(S)$ to $[0,1]$, that is
		\begin{equation*}
			\pi^0_{k+1}(u,u';\alpha,m):=\int_{A}P^0_{k+1}(u,a,m,u')\alpha(da),
		\end{equation*}
	is continuous. Indeed, for any sequence $(\alpha_n,m_n)\to(\alpha,m)$ in $\mathcal{P}(A) \times \mathcal{P}^{\text{sub}}(S)$,
       {\small \begin{align*}
            \left|\pi^0_{k+1}(u,u';\alpha_n,m_n)-\pi^0_{k+1}(u,u';\alpha,m)\right|
            &\leq \left|\int_{A}P^0_{k+1}(u,a,m_n,u')\alpha_n(da)-\int_{A}P^0_{k+1}(u,a,m,u')\alpha_n(da)\right|\\
            &\quad +\left|\int_{A}P^0_{k+1}(u,a,m,u')\alpha_n(da)-\int_{A}P^0_{k+1}(u,a,m,u')\alpha(da)\right|\\
            &\leq \sup_{a\in A}\left|P^0_{k+1}(u,a,m_n,u')-P^0_{k+1}(u,a,m,u')\right|\\
            &\quad +\left|\int_{A}P^0_{k+1}(u,a,m,u')\alpha_n(da)-\int_{A}P^0_{k+1}(u,a,m,u')\alpha(da)\right|\to 0, 
        \end{align*}}as $n\to\infty$. The first term converges to zero because $P^0_{k+1}(u,\cdot,\cdot,u')$ is continuous on the compact set $A \times \mathcal{P}^{\text{sub}}(S)$ and thus uniformly continuous. The second term converges to zero by the definition of weak convergence of measures, as $a \mapsto P^0_{k+1}(u,a,m,u')$ is a bounded and continuous function on $A$.
\end{remark}}
\begin{remark}\label{remark:pi}
	{By Assumption \ref{a1new:P^0} and Remark \ref{remark:pi0}, we conclude that  
		$\pi_t(x, u; \alpha, m; dx', du') : S \times W \times \mathcal{P}(A) \times \mathcal{P}^{\text{sub}}(S) \to \mathcal{P}(S \times W)$,  
		where  
		\[
		\pi_t(x, u; \alpha, m; dx', du') := P_{t}(x, u, m, dx') \pi^0_{t}(u, du'; \alpha, m),
		\]  
		is continuous for each \( t \in \mathbb{T}^* \). }
\end{remark}
{\begin{assumption}\label{a2new:f0g0}
\begin{enumerate}[\upshape (i)]
    \item \emph{(Major Player Running Reward):} For each fixed $(t,x) \in (\mathbb{T} \setminus \{T\}) \times S^0$, the running reward function
    \(
        (a, m) \mapsto f^0_t(x, a, m)
    \)
    is continuous. Moreover, when $\operatorname{Leb}(A) > 0$, the mapping $a \mapsto f^0_t(x, a, m)$ is Lipschitz continuous, uniformly with respect to other arguments.

    \item \emph{(Major Player Terminal Reward):} For each fixed $x \in S^0$, the terminal reward function
    \(
        m \mapsto g^0(x, m)
    \)
    is continuous.
\end{enumerate}
\end{assumption}
\begin{assumption}\label{a3:minor}
\begin{enumerate}[\upshape (i)]
    \item \emph{(Minor Player Continuation  Reward):} For each fixed $(t,u) \in (\mathbb{T} \setminus \{T\}) \times W$, the continuation reward function
    \(
        (x, m) \mapsto f_t(x, u, m)
    \)
    is continuous.

    \item \emph{(Minor Player Stopping  Reward):} For each fixed $(t,u) \in \mathbb{T} \times W$, the stopping reward function
    \(
        (x, m) \mapsto g_t(x, u, m)
    \)
    is continuous.
\end{enumerate}
\end{assumption}}
We next state three auxiliary lemmas that will be used later in the analysis. Their proofs are standard and are therefore deferred to Appendix \ref{appendix:proof}.
\begin{lemma}\label{lemma:r[m,a]}
Under Assumption \ref{a1new:P^0}, the set $\mathcal{R}[m;\alpha]$ defined in Definition \ref{constraint} is compact.
\end{lemma}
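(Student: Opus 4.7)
The plan is to show that $\mathcal{R}[m;\alpha]$ is a closed subset of the ambient space $\mathcal{V}$; since $\mathcal{V}$ is already compact (as a finite product of spaces of sub-probability measures on the compact metric space $S \times W$, endowed with weak convergence, noting that $W$ is finite), compactness of $\mathcal{R}[m;\alpha]$ will follow immediately. Thus the entire task reduces to proving closedness.

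To this end, I would take any sequence $(\tilde{\mu}^n, \tilde{m}^n) \in \mathcal{R}[m;\alpha]$ converging to $(\tilde{\mu}, \tilde{m}) \in \mathcal{V}$ in the product weak topology, fix an arbitrary $\varphi \in C_b(\mathbb{T} \times S \times W)$, and verify that the defining linear constraint in Definition~\ref{constraint} passes to the limit. The left-hand side is straightforward: for each fixed $t$, the function $(x,u) \mapsto \varphi(t,x,u)$ lies in $C_b(S \times W)$, and weak convergence of $\tilde{\mu}^n_t$ to $\tilde{\mu}_t$ yields convergence of the integrals term-by-term. The initial-distribution term on the right-hand side is independent of $n$.

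The main step is to handle the $\mathcal{L}(\varphi)$ term, and this is where I expect the only subtlety to arise. For each fixed $t \in \mathbb{T} \setminus \{T\}$, I need to verify that the integrand
\[
(x,u) \;\longmapsto\; \mathcal{L}(\varphi)(t,x,u;\alpha_t(u),m_t(u)) \;=\; \int_{S\times W}\bigl[\varphi(t+1,x',u') - \varphi(t,x,u)\bigr]\,\pi_{t+1}(x,u;\alpha_t(u),m_t(u);dx',du')
\]
belongs to $C_b(S \times W)$. Boundedness is clear since $\varphi$ is bounded and $\pi_{t+1}$ is a transition probability. For continuity: since $W$ is equipped with the discrete metric, continuity in $u$ is automatic, so it suffices to check continuity in $x$ for each fixed $u$. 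With $u$ fixed, $\alpha_t(u)$ and $m_t(u)$ are fixed elements of $\mathcal{P}(A)$ and $\mathcal{P}^{\text{sub}}(S)$, so by Remark~\ref{remark:pi} the kernel $x \mapsto \pi_{t+1}(x,u;\alpha_t(u),m_t(u);\cdot)$ is weakly continuous; integrating the bounded continuous function $(x',u') \mapsto \varphi(t+1,x',u')$ against this kernel therefore yields a continuous function of $x$, and subtracting the continuous term $\varphi(t,x,u)$ preserves continuity.

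Having established that $\mathcal{L}(\varphi)(t,\cdot,\cdot;\alpha_t(\cdot),m_t(\cdot)) \in C_b(S \times W)$ for each $t$, weak convergence of $\tilde{m}^n_t$ to $\tilde{m}_t$ yields
\[
\int_{S\times W} \mathcal{L}(\varphi)(t,x,u;\alpha_t(u),m_t(u))\,\tilde{m}^n_t(dx,du) \;\longrightarrow\; \int_{S\times W} \mathcal{L}(\varphi)(t,x,u;\alpha_t(u),m_t(u))\,\tilde{m}_t(dx,du),
\]
for every $t$. Summing over $t$ and combining with the convergence of the left-hand side shows that $(\tilde{\mu},\tilde{m})$ satisfies the constraint for the given $\varphi$; since $\varphi \in C_b(\mathbb{T} \times S \times W)$ was arbitrary, we conclude $(\tilde{\mu},\tilde{m}) \in \mathcal{R}[m;\alpha]$. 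Hence $\mathcal{R}[m;\alpha]$ is closed in the compact space $\mathcal{V}$ and is therefore compact.
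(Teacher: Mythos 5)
Your proposal is correct and follows essentially the same route as the paper's proof: compactness of the ambient space $\mathcal{V}$ reduces the problem to closedness, which is verified by passing the linear constraint to the limit using the fact that $(x,u)\mapsto \mathcal{L}(\varphi)(t,x,u;\alpha_t(u),m_t(u))$ is bounded and continuous (via Remark~\ref{remark:pi}). Your additional detail on why this integrand is continuous (fixing $u$, which is legitimate since $W$ is finite with the discrete metric) is a slight elaboration of the step the paper handles by citing Remark~\ref{remark:pi} directly.
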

\begin{lemma}\label{lemma:Gamma}
	 Let Assumptions \ref{a1new:P^0} and \ref{a3:minor} hold. Then, the map $$\mathcal{V}\times(\mathcal{C}_{\mu}\times\mathcal{C}_{m}) \ni((\bar{\mu}, \bar{m}),(\mu, m)) \mapsto J^1(\bar{\mu}, \bar{m};\mu, m) \in \mathbb{R}$$ defined in Definition \ref{def:lpop} is continuous. 
\end{lemma}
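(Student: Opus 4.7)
The plan is to decompose $J^1$ into a finite sum of simpler integrals and establish continuity of each summand separately. By linearity, it suffices to prove that for each $t$, the map $((\bar{\mu},\bar{m}),(\mu,m)) \mapsto \int_{S \times W} f_t(x,u,m_t(u)) \, \bar{m}_t(dx,du)$ is continuous, together with the analogous stopping-reward term. Since $W$ is finite with the discrete metric, any $\nu \in \mathcal{P}^{\text{sub}}(S \times W)$ decomposes as $\nu = \sum_{u \in W} \nu(\cdot,\{u\})$, and weak convergence on $S \times W$ is equivalent to weak convergence of each marginal $\nu(\cdot,\{u\})$ on $S$ (test against $\phi(x)\mathbf{1}_{\{u\}}$, noting that $\mathbf{1}_{\{u\}}$ is continuous on the discrete set $W$). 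Hence the running-reward term becomes the finite sum $\sum_{u \in W} \int_S f_t(x,u,m_t(u)) \, \bar{m}_t(dx,\{u\})$, and the task reduces to continuity of each summand.

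Fix $(t,u)$ and suppose $(\bar{m}^n_t(\cdot,\{u\}), m^n_t(u)) \to (\bar{m}_t(\cdot,\{u\}), m_t(u))$ in $\mathcal{P}^{\text{sub}}(S) \times \mathcal{P}^{\text{sub}}(S)$. By Assumption \ref{a3:minor}(i), the function $(x,m) \mapsto f_t(x,u,m)$ is continuous on the compact metric space $S \times \mathcal{P}^{\text{sub}}(S)$ (compactness of $\mathcal{P}^{\text{sub}}(S)$ follows from compactness of $S$ via Prokhorov's theorem). Joint continuity on a compact product space upgrades pointwise convergence to uniform convergence: the functions $F^n(x) := f_t(x,u,m^n_t(u))$ converge uniformly on $S$ to $F(x) := f_t(x,u,m_t(u))$, and each is bounded and continuous. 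Combining this with the weak convergence $\bar{m}^n_t(\cdot,\{u\}) \to \bar{m}_t(\cdot,\{u\})$ via the triangle inequality
$$
\Bigl| \int_S F^n \, d\bar{m}^n_t(\cdot,\{u\}) - \int_S F \, d\bar{m}_t(\cdot,\{u\}) \Bigr| \leq \|F^n - F\|_\infty \cdot \bar{m}^n_t(S,\{u\}) + \Bigl| \int_S F \, d\bar{m}^n_t(\cdot,\{u\}) - \int_S F \, d\bar{m}_t(\cdot,\{u\}) \Bigr|,
$$
the first term vanishes since $\|F^n - F\|_\infty \to 0$ and $\bar{m}^n_t(S,\{u\}) \leq 1$, while the second vanishes by weak convergence applied to the bounded continuous test function $F$. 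Summing over the finite set $W$ gives continuity of the running-reward term, and the stopping-reward terms are handled identically using Assumption \ref{a3:minor}(ii).

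I do not foresee a significant obstacle here; the only subtlety is that both the integrand and the measure vary simultaneously, which the triangle-inequality splitting handles in a standard way. The key enabling features are the finiteness of $W$ (reducing integration on $S \times W$ to a finite sum of integrations on $S$), the compactness of $\mathcal{P}^{\text{sub}}(S)$ (promoting pointwise to uniform convergence of the integrand family), and the joint continuity of $f_t, g_t$ given by Assumption \ref{a3:minor}.
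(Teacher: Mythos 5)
Your proof is correct and follows essentially the same route as the paper: the paper's proof simply invokes "the same reasoning as in Remark 3.2" together with the finiteness of $W$, which is exactly your argument — split the difference of integrals into a term controlled by uniform convergence of the integrand (obtained from joint continuity of $f_t,g_t$ on the compact set $S\times\mathcal{P}^{\text{sub}}(S)$, i.e.\ uniform continuity) and a term handled by weak convergence of the (sub)probability measures, summed over the finite set $W$. Your write-up just spells out the details the paper leaves implicit.
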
 
{\begin{lemma}\label{lemma:conditioning}
    Let Assumption \ref{a1new:P^0} hold. The conditioning mapping $\Gamma:\mathcal{V}\times\mathcal{A}\times\mathcal{C}_m\rightarrow 2^{\mathcal{C}_{\mu}\times\mathcal{C}_{m}}$ defined in Definition \ref{def:relaxed} has a closed graph.
\end{lemma}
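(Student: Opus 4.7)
The plan is to verify the closed graph property directly. Let $((\tilde{\mu}^n, \tilde{m}^n), \alpha^n, m^n) \to ((\tilde{\mu}, \tilde{m}), \alpha, m)$ in $\mathcal{V} \times \mathcal{A} \times \mathcal{C}_m$ and let $(\mu'^n, m'^n) \in \Gamma((\tilde{\mu}^n, \tilde{m}^n), \alpha^n, m^n)$ converge to some $(\mu', m') \in \mathcal{C}_\mu \times \mathcal{C}_m$; the goal is to check that $(\mu', m')$ satisfies the two disintegration identities defining $\Gamma((\tilde{\mu}, \tilde{m}), \alpha, m)$.

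The first key step is to establish pointwise convergence of the major player's marginal law, namely $p_t(u; \alpha^n, m^n) \to p_t(u; \alpha, m)$ for every $(t,u) \in \mathbb{T} \times W$. I will prove this by induction on $t$. The base case $t=0$ is immediate since $p_0$ does not depend on $(\alpha, m)$. For the inductive step I use the defining recursion
\[
p_{t+1}(u; \alpha, m) = \sum_{u' \in W} \pi^0_{t+1}(u', u; \alpha_t(u'), m_t(u'))\, p_t(u'; \alpha, m),
\]
and combine the induction hypothesis with the joint continuity of $(\alpha, m) \mapsto \pi^0_{t+1}(u', u; \alpha, m)$ supplied by Remark \ref{remark:pi0}. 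The convergences $\alpha^n \to \alpha$ in $\mathcal{A}$ and $m^n \to m$ in $\mathcal{C}_m$ are, by definition of the product topology, exactly the pointwise weak convergences $\alpha^n_t(u') \to \alpha_t(u')$ and $m^n_t(u') \to m_t(u')$ for each $(t,u')$; since the sum over $W$ is finite, the conclusion follows.

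The second step is to pass to the limit in the disintegration identities. Because $W$ carries the discrete topology and is finite, weak convergence in $\mathcal{P}^{\text{sub}}(S \times W)$ is equivalent to weak convergence of each restricted slice $\tilde{\mu}^n_t(\,\cdot\,, \{u\}) \in \mathcal{P}^{\text{sub}}(S)$ for every $u \in W$. Applying this to the identity
\[
\tilde{\mu}^n_t(dx, \{u\}) = p_t(u; \alpha^n, m^n)\, \mu'^n_t(u)(dx),
\]
the left-hand side converges weakly to $\tilde{\mu}_t(dx, \{u\})$, while on the right-hand side the scalar $p_t(u; \alpha^n, m^n)$ converges to $p_t(u; \alpha, m)$ by Step 1 and $\mu'^n_t(u) \to \mu'_t(u)$ weakly by convergence in $\mathcal{C}_\mu$, so the product converges weakly to $p_t(u; \alpha, m)\, \mu'_t(u)(dx)$. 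Uniqueness of weak limits yields the identity for $\mu'$; the analogous argument applied to the slices of $\tilde{m}^n$ gives the identity for $m'$. Hence $(\mu', m') \in \Gamma((\tilde{\mu}, \tilde{m}), \alpha, m)$, closing the graph.

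The main (and essentially only) technical ingredient is Step 1, which propagates the joint continuity of $\pi^0$ through time to produce continuity of the flow $(\alpha, m) \mapsto p_t(\,\cdot\,; \alpha, m)$; this is precisely what Assumption \ref{a1new:P^0} and the derived Remark \ref{remark:pi0} are designed to supply. A minor care point is the possibility that $p_t(u; \alpha, m) = 0$ for some $u$, in which case the limit identity forces $\tilde{\mu}_t(\,\cdot\,, \{u\}) = 0$; this is automatically consistent because the total mass $\tilde{\mu}^n_t(S\times\{u\}) = p_t(u;\alpha^n,m^n)\, \mu'^n_t(u)(S)$ tends to zero, and the slice $\mu'_t(u)$ is then left unrestricted, exactly as permitted by the definition of $\Gamma$.
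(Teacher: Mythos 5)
Your proposal is correct and follows essentially the same route as the paper: first establish $p_t(\cdot;\alpha^n,m^n)\to p_t(\cdot;\alpha,m)$ by induction using Remark \ref{remark:pi0}, then pass to the limit in the disintegration identity (your slice-by-slice argument over the finite set $W$ is equivalent to the paper's test-function computation on $S\times W$). The remark about the zero-mass slices is a nice explicit touch, but no substantive difference from the paper's proof.
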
}
\begin{remark}\label{remark:convex}
   Lemmas~\ref{lemma:r[m,a]}--\ref{lemma:Gamma} ensure that, for any triple $(\mu,m,\alpha)\in\mathcal{C}_\mu\times\mathcal{C}_m\times\mathcal{A}$,   $\hat{\Theta}(\mu,m,\alpha)$ defined in Definition \ref{def:relaxed} is nonempty. In combination with Lemma~\ref{lem:properties_gamma_theta}, this further implies that $\Gamma(\hat{\Theta}(\mu,m,\alpha),\alpha,m)$ constitutes a nonempty convex subset of $\mathcal{C}_\mu \times \mathcal{C}_m$.
\end{remark}

 For the case of $A\subset \mR^\ell$  with \( \text{Leb}(A) > 0 \), we require the following assumption from  \cite{erhan2024} to properly control the entropy term.
\begin{assumption}\label{a:A}
	When $\ell>1$, there exists $\vartheta>0$ and $\iota \in(0, \pi / 2]$ such that for any $a \in A$, there is a cone with vertex $a$ and angle $\iota($ denoted by $\operatorname{cone}(a, \iota))$ that satisfies $\left(\operatorname{cone}(a, \iota) \cap B_{\vartheta}(a)\right) \subseteq A$. When $\ell=1$, there exists $\vartheta>0$ such that for any $a \in A$, either $[a-\vartheta, a]$ or $[a, a+\vartheta]$ is contained in $A$.
\end{assumption}

\section{Existence of Regularized Equilibrium}\label{sect:regul}
Given \( (\mu, m) \in \mathcal{C}_{\mu} \times \mathcal{C}_m \), we will solve the regularized control problem \eqref{eq:major_regularized_reward} using the dynamic programming principle (DPP). Define the value function by
\begin{equation*}
\begin{split}
    	V^{\lambda}(t,u)=&\sup_{\tilde{\alpha}\in\mathcal{D}_{t}}\mathbb{E} \bigg[ \sum_{s=t}^{T-1} \left( \int_{A} f_{s}^0(\Psi_s(U_s), a, m_s(U_s)) \tilde{\alpha}_s(U_s)(da) + \lambda \mathcal{H}(\tilde{\alpha}_s(U_s)) \right) \\&\qquad+ g^0(\Psi_T(U_T), \mu_{T}(U_{T}))\bigg|U_{t}=u \bigg],
\end{split}
\end{equation*}
where $\mathcal{D}_t$ denotes the set of admissible regularized controls from time $t$ onwards.
By DPP, we can derive that
\begin{equation*}
	\begin{split}
		&V^{\lambda}(t,u)=\max_{\tilde{\alpha}\in {\mathcal{D}(A)}}\bigg\{\int_{A} f_{t}^0(\Psi_t(u), a, m_t(u)) \tilde{\alpha}(u)(da) + \lambda \mathcal{H}(\tilde{\alpha}(u)) \\&\qquad\qquad\qquad\qquad+\sum_{u'\in W}V^{\lambda}(t+1,u')\int_{A}P^0_{t+1}(u,a,m_{t}(u),u')\tilde{\alpha}(u)(da)\bigg\},\\
		&V^{\lambda}(T,u)=g^0(\Psi_T(u), \mu_{T}(u)).
	\end{split}
\end{equation*}
{It follows from the strict concavity of the entropy  and the first-order condition that the optimal regularized control is characterized by}
\begin{equation*}
	\alpha^{\lambda}_t(u)(a)=\frac{e^{\frac{1}{\lambda}(f^0_{t}(\Psi_t(u), a, m_t(u))+\sum_{u'\in W}V^{\lambda}(t+1,u')P^0_{t+1}(u,a,m_{t}(u),u'))}}{\int_Ae^{\frac{1}{\lambda}(f^0_{t}(\Psi_t(u), a, m_t(u))+\sum_{u'\in W}V^{\lambda}(t+1,u')P^0_{t+1}(u,a,m_{t}(u),u'))}da},\quad t\in \mathbb{T}\setminus\{T\}.
\end{equation*}
For the case that $A=\left\{0,1\right\}$, we have
\begin{align*}
&\alpha^{\lambda}_t(u)(1)=\frac{e^{\frac{1}{\lambda}(f^0_{t}(\Psi_t(u), 1, m_t(u))+\sum_{u'\in W}V^{\lambda}(t+1,u')P^0_{t+1}(u,1,m_{t}(u),u'))}}{\sum_{i=0}^1e^{\frac{1}{\lambda}(f^0_{t}(\Psi_t(u), i, m_t(u))+\sum_{u'\in W}V^{\lambda}(t+1,u')P^0_{t+1}(u,i,m_{t}(u),u'))}},\quad t\in \mathbb{T}\setminus\{T\},\\
&\alpha^{\lambda}_t(u)(0)=\frac{e^{\frac{1}{\lambda}(f^0_{t}(\Psi_t(u), 0, m_t(u))+\sum_{u'\in W}V^{\lambda}(t+1,u')P^0_{t+1}(u,0,m_{t}(u),u'))}}{\sum_{i=0}^1e^{\frac{1}{\lambda}(f^0_{t}(\Psi_t(u), i, m_t(u))+\sum_{u'\in W}V^{\lambda}(t+1,u')P^0_{t+1}(u,i,m_{t}(u),u'))}},\quad t\in \mathbb{T}\setminus\{T\}.	
\end{align*}
\begin{remark}\label{remark:alphacontinu}
{By Assumptions \ref{a1new:P^0} and \ref{a2new:f0g0}, and using backward induction, we conclude that \(\alpha^{\lambda}\) is continuous in \((\mu,m)\). In the sequel, when we need to emphasize the dependence  \(\alpha^{\lambda}\) on \((\mu,m)\), we write \(\alpha^\lambda\) as \(\alpha^\lambda(\mu,m)\) or $(\alpha_k^\lambda(u;\mu,m))_{k,u}$.}
\end{remark}

To establish the existence of a regularized equilibrium, we first demonstrate the continuity of the set $\mathcal{R}^*(\mu,m):=\mathcal{R}[m;\alpha^\lambda(\mu,m)]$, in the context of set-valued mappings. Afterwards, we prove the existence of a fixed point for $\Phi_\lambda$ using Kakutani-Fan-Glicksberg's fixed point theorem.
\begin{lemma}\label{setR*contin}
Under Assumptions \ref{a1new:P^0}-\ref{a3:minor}, the set-valued mapping $\mathcal{R}^*:(\mu,m) \mapsto \mathcal{R}[m; \alpha^\lambda(\mu,m)]$ is continuous from $\mathcal{C}_\mu \times \mathcal{C}_m$ to $2^\mathcal{V}$.
\end{lemma}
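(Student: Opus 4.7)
The plan is to factor $\mathcal{R}^*$ as the composition $(\mu, m) \mapsto (m, \alpha^\lambda(\mu, m)) \mapsto \mathcal{R}[m;\alpha^\lambda(\mu,m)]$. Since the first map is continuous by Remark \ref{remark:alphacontinu}, the task reduces to showing that the set-valued map $(\alpha, m) \mapsto \mathcal{R}[m;\alpha]$ from $\mathcal{A} \times \mathcal{C}_m$ to $2^\mathcal{V}$ is continuous, which I would establish by proving upper and lower hemicontinuity separately.

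For upper hemicontinuity, compactness of $\mathcal{V}$ together with Lemma \ref{lemma:r[m,a]} reduces the task to closedness of the graph. Given $(\alpha^n, m^n) \to (\alpha, m)$ and $(\tilde\mu^n, \tilde m^n) \in \mathcal{R}[m^n;\alpha^n]$ converging to $(\tilde\mu, \tilde m)$, I would pass to the limit in the LP constraint of Definition \ref{constraint} for each $\varphi \in C_b(\mathbb{T} \times S \times W)$. The LHS converges by weak convergence. For the RHS, a triangle-inequality split separates the change in integrand (against $\tilde m_t^n$) from the change in measure (against the limit integrand): the first piece vanishes by uniform convergence of $(x,u) \mapsto \mathcal{L}(\varphi)(t, x, u; \alpha_t^n(u), m_t^n(u))$, which follows from joint continuity of $\mathcal{L}(\varphi)$ (Remark \ref{remark:pi}), uniform continuity on the compact parameter space, and finiteness of $W$; the second vanishes by weak convergence of $\tilde m_t^n$.

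For lower hemicontinuity, I would recast the LP constraint as a forward recursion on $\rho_t := \tilde\mu_t + \tilde m_t$ (with the convention $\tilde m_T \equiv 0$): the constraint is equivalent to $\rho_0 = m_0^*$ together with $\rho_{t+1}(dx',du') = \int \pi_{t+1}(x, u; \alpha_t(u), m_t(u); dx', du')\, \tilde m_t(dx, du)$ for $0 \leq t \leq T-1$. Given $(\tilde\mu, \tilde m) \in \mathcal{R}[m;\alpha]$, extract the Borel stopping rate $r_t := d\tilde\mu_t/d\rho_t \in [0,1]$ (extended arbitrarily off $\operatorname{supp}(\rho_t)$). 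For $(\alpha^n, m^n) \to (\alpha, m)$, define $(\tilde\mu^n, \tilde m^n)$ recursively via $\rho_0^n := m_0^*$, $\tilde\mu_t^n := r_t\rho_t^n$, $\tilde m_t^n := (1-r_t)\rho_t^n$, and $\rho_{t+1}^n := \int \pi_{t+1}^n(x, u; \alpha_t^n(u), m_t^n(u); \cdot)\, \tilde m_t^n(dx, du)$; by construction $(\tilde\mu^n, \tilde m^n) \in \mathcal{R}[m^n;\alpha^n]$.

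The main obstacle is proving weak convergence $(\tilde\mu_t^n, \tilde m_t^n) \to (\tilde\mu_t, \tilde m_t)$ by induction on $t$. The push-forward step preserves weak convergence by the joint continuity of Remark \ref{remark:pi} combined with uniform continuity on the compact parameter space, but the multiplication step by the merely Borel function $r_t$ does not. I would address this by approximating $r_t$ in $L^1(\rho_t)$ by continuous functions $\tilde r_t^{\epsilon}$ (Lusin's theorem on the compact metric space $S \times W$), running the recursive construction with $\tilde r_t^{\epsilon}$ in place of $r_t$ (under which each step is weakly continuous in the measure variables), and then passing to the limit $\epsilon \to 0$ along a diagonal subsequence $\epsilon_n \downarrow 0$. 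The delicate point is controlling the $L^1$-error against $\rho_t^n$ uniformly in $n$; since each $\rho_t^n$ is obtained from the fixed measure $m_0^*$ by $t$ successive push-forwards under continuously-varying kernels, I would exploit this structural dependence (for instance by dominating all relevant measures by a common finite reference measure constructed from the weakly convergent sequence) to propagate the uniform approximation through the induction and close the argument.
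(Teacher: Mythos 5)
Your reduction to the continuity of $(\alpha,m)\mapsto\mathcal{R}[m;\alpha]$ and your upper hemicontinuity argument (closed graph, triangle-inequality split, uniform continuity of $\mathcal{L}(\varphi)$ on the compact parameter space, finiteness of $W$) are essentially identical to Step~1 of the paper's proof and to Lemma~\ref{sethatr_con}. The forward recursion $\rho_0=m_0^*$, $\rho_{t+1}=(\tilde m_t)\pi_{t+1}$ is also a correct reformulation of the constraint in Definition~\ref{constraint}.

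The lower hemicontinuity argument, however, has a genuine gap exactly at the point you flag, and the remedies you sketch do not close it under the paper's hypotheses. The step $r_t\rho_t^n\rightharpoonup r_t\rho_t$ requires more than weak convergence $\rho_t^n\rightharpoonup\rho_t$ when $r_t$ is merely Borel, and the Lusin argument needs the exceptional set $\{r_t\neq\tilde r_t^{\epsilon}\}$ to have small $\rho_t^n$-measure \emph{uniformly in $n$}. Assumption~\ref{a1new:P^0} only requires the kernels $P_{t+1}(x,u,m,dx')$ to be weakly continuous; it does not require them to be absolutely continuous with respect to any reference measure. For instance, deterministic dynamics $P_{t+1}(x,u,m,dx')=\delta_{F(x,u,m)}(dx')$ with continuous $F$ are admissible, in which case the measures $\rho_t^n$ are push-forwards of $m_0^*$ under $n$-dependent maps and can be mutually singular with $\rho_t$ and with one another. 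No common finite dominating measure exists in general, so the $L^1(\rho_t^n)$-error of the Lusin approximant cannot be controlled uniformly in $n$, and the diagonal argument does not go through. This is not a removable technicality: the decomposition of $\rho_t$ into $\tilde\mu_t$ and $\tilde m_t$ genuinely encodes a stopping rule that is only defined $\rho_t$-almost everywhere, and transplanting it onto the singular measures $\rho_t^n$ via a pointwise-defined density is where the information is lost.

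The paper avoids this by working at the level of random variables rather than densities. Using the Blackwell--Dubins construction of Appendix~\ref{appendixB}, it builds a single filtered probability space carrying Markov chains $(\overline{X}^n,\overline{U}^n)$ and $(\overline{X},\overline{U})$ with the respective kernels such that $(\overline{X}^n_k,\overline{U}^n_k)\to(\overline{X}_k,\overline{U}_k)$ almost surely, then (via Theorem~18 of \cite{Peter2023}) represents the given $(\tilde\mu,\tilde m)$ by a single randomized stopping time $\bar\tau$ on an enlargement of that space, and finally defines $(\tilde\mu^n,\tilde m^n)$ as the occupation measures of $(\overline{X}^n,\overline{U}^n)$ with the \emph{same} $\bar\tau$. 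Convergence then follows from almost sure convergence and the Bounded Convergence Theorem applied to $\varphi(\overline{X}^n_t,\overline{U}^n_t)\mathbf{1}_{t<\bar\tau}$; the stopping decision is a fixed event on the common space, so no multiplication of $n$-dependent measures by a Borel density ever occurs. To repair your argument you would either need an additional equicontinuity or domination hypothesis on the kernels, or you should replace the density-transport step by this coupling.
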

\begin{proof}
	\textit{Step 1}. We first prove the upper hemicontinuity. By the Closed Graph Theorem, it suffices to show that $\mathcal{R}^*$ has closed graph. Consider a sequence $\left(\tilde{\mu}^n, \tilde{m}^n\right) \in \mathcal{R}^*(\overline{\mu}^n,\overline{m}^n)=\mathcal{R}[\overline{m}^n;\alpha^{\lambda}(\overline{\mu}^n,\overline{m}^n)]$ such that $\left(\tilde{\mu}^n, \tilde{m}^n\right)_{n \geq 1}\rightharpoonup(\tilde{\mu}, \tilde{m})$  and $(\overline{\mu}^n,\overline{m}^n)\rightharpoonup(\overline{\mu},\overline{m})$. Let us show that $(\tilde{\mu}, \tilde{m}) \in \mathcal{R}^*(\overline{\mu},\overline{m})$.  

Since $\left(\tilde{\mu}^n, \tilde{m}^n\right) \in \mathcal{R}^*(\overline{\mu}^n, \overline{m}^n)$, for all $\varphi \in C_b(\mathbb{T} \times S \times W)$, it holds that
		\begin{align*}
			\sum_{t=0}^{T} \int_{S\times W} \varphi(t, x, u) \tilde{\mu}^n_t(dx,du)
			&=  \int_{S\times W} \varphi(0, x, u) m_0^*(dx,du)\\&+\sum_{t=0}^{T-1} \int_{S\times W} \mathcal{L}(\varphi)(t, x, u;\alpha^{\lambda}_t(u;\overline{\mu}^n,\overline{m}^n),\overline{m}^n_{t}(u))\tilde{m}^n_t(dx,du).
		\end{align*}
        We need to show that this equality holds in the limit. By weak convergence, $\int \varphi \, d\tilde{\mu}^n_t \to \int \varphi \, d\tilde{\mu}_t$. The critical step is to show the convergence of the integral involving the generator.
        Note that
	{\small	\begin{align*}
			\int_{S\times W}& \mathcal{L}(\varphi)(t, x, u;\alpha^{\lambda}_t(u;\overline{\mu}^n,\overline{m}^n),\overline{m}^n_{t}(u))\tilde{m}^n_t(dx,du)\\&=\sum_{u',u\in W}\pi^0_{t+1}(u,u';\alpha^{\lambda}_t(u;\overline{\mu}^n,\overline{m}^n),\overline{m}^n_{t}(u))\int_{S}\bigg(\int_{S}\varphi(t+1,x',u')P_{t+1}(x,u,\overline{m}^n_{t}(u),dx')\bigg)\tilde{m}^n_t(dx,\{u\})\\&-\int_{S\times W}\varphi(t,x,u)\tilde{m}^n_t(dx,du).
		\end{align*}}
	  { Then, combining Assumption \ref{a1new:P^0}, Remark \ref{remark:pi0}, and Remark \ref{remark:alphacontinu}, and using the same argument as in Remark \ref{remark:pi0},  we have
	{\small	$$
	\lim _{n \rightarrow \infty} 	\int_{S\times W} \mathcal{L}(\varphi)(t, x, u;\alpha^{\lambda}_t(u;\overline{\mu}^n,\overline{m}^n),\overline{m}^n_{t}(u))\tilde{m}^n_t(dx,du)= \int_{S\times W} \mathcal{L}(\varphi)(t, x, u;\alpha^{\lambda}_t(u;\overline{\mu},\overline{m}),\overline{m}_{t}(u))\tilde{m}_t(dx,du) .
		$$}}
		It readily follows that \( (\tilde{\mu}, \tilde{m}) \in \mathcal{R}^*(\overline{\mu}, \overline{m}) \).

	\textit{Step 2.} We now prove the lower hemicontinuity. Consider a sequence \( (\overline{\mu}^n, \overline{m}^n)_{n \geq 1} \subset \mathcal{C}_{\mu}\times\mathcal{C}_{m} \) such that \( (\overline{\mu}^n, \overline{m}^n) \rightharpoonup (\overline{\mu}, \overline{m}) \) and let \( (\mu, m) \in \mathcal{R}^{\star}(\overline{\mu}, \overline{m}) = \mathcal{R}[\bar{m};\alpha^{\lambda}(\overline{\mu},\overline{m})] \). Our task is to show that, up to a subsequence, we can find  \( (\mu^n, m^n) \in \mathcal{R}^{\star}(\overline{\mu}^n, \overline{m}^n) = \mathcal{R}[\bar{m}^n;\alpha^{\lambda}(\overline{\mu}^n,\overline{m}^n)] \) and \( (\mu^n, m^n) \rightharpoonup (\mu, m) \). 
	
	
By the argument in Appendix \ref{appendixB}, we can construct a filtered probability space \((\Omega, \mathcal{F}, \mathbb{F}, P)\) such that \((\tilde{X}^n, \tilde{U}^n)\) and \((\tilde{X}, \tilde{U})\) are \(\mathbb{F}\)-Markov chains taking values in \(S \times W\), with transition kernels  
\[
\pi(x, u; \alpha^{\lambda}(\overline{\mu}^n,\overline{m}^n), \overline{m}^n; dx', du') \quad \text{and} \quad \pi(x, u; \alpha^{\lambda}(\overline{\mu},\overline{m}), \overline{m}; dx', du'),
\]  
and the same initial distribution \(m_0^*\). Moreover,  
\[
(\tilde{X}^n_k, \tilde{U}^n_k) \to (\tilde{X}_k, \tilde{U}_k), \quad \forall k \in\mathbb{T}, \, P\text{-a.s.}
\]
		
Following the argument in Theorem~18 of Appendix~A in \cite{Peter2023}, we conclude that for \( (\mu, m) \in \mathcal{R}[\bar{m}; \alpha^{\lambda}(\overline{\mu},\overline{m})] \), it can be associated with randomized stopping times. Specifically, we can extend the filtered probability space \((\Omega, \mathcal{F}, \mathbb{F}, P)\) to \((\overline{\Omega}, \overline{\mathcal{F}}, \overline{\mathbb{F}}, \overline{\mathbb{P}})\) such that
 \begin{itemize}
 	\item  \((\overline{\Omega}, \overline{\mathcal{F}}, \overline{\mathbb{P}})\) is a complete probability space endowed with a complete filtration \(\overline{\mathbb{F}}\), which supports random variables $(\overline{\tau},\overline{X}, \overline{U},\overline{X}^1, \overline{U}^1,\cdots)$.
 	
 	\item \((\overline{X}, \overline{U})\) and \((\overline{X}^n,\overline{U}^n)\) are \(\overline{\mathbb{F}}\)-Markov chains valued in \(S \times W\), with transition kernels  \[ \pi(x, u; \alpha^{\lambda}(\overline{\mu},\overline{m}), \overline{m}; dx', du') \quad \text{and} \quad \pi(x, u; \alpha^{\lambda}(\overline{\mu}^n,\overline{m}^n), \overline{m}^n; dx', du'),\]  
     and the same initial distribution \(m_0^*\), satisfying  \(( \overline{X}^n_k,\overline{U}^n_k) \to (\overline{X}_k, \overline{U}_k)\),  \(\forall k \in \mathbb{T}\), \(\overline{\mP}\)-a.s.

 	\item $\bar{\tau}$ is an $\overline{\mathbb{F}}$-stopping time valued in $\mathbb{T}$.

 	\item The measures have the following representations:
 	\begin{align*}
 		& m_t(B\times \{u\})=\overline{\mathbb{P}}\left[\overline{X}_t \in B, \,\overline{U}_t=u,\, t<\overline{\tau}\right], \quad B \in \mathcal{B}(S), \quad u \in W,\quad t \in \mathbb{T} \backslash\{T\}, \\
 		& \mu_t(B\times \{u\})=\overline{\mathbb{P}}\left[\overline{X}_t \in B,\, \overline{U}_t=u,\, \overline{\tau}=t \right], \quad B \in \mathcal{B}(S), \quad u \in W, \quad t \in \mathbb{T}.
 	\end{align*}
 \end{itemize}
  Now, let us define
\begin{align*}
	& m^n_t(B\times \{u\}):=\overline{\mathbb{P}}\left[\overline{X}^n_t \in B,\,\overline{U}^n_t=u,\, t<\overline{\tau}\right], \quad B \in \mathcal{B}(S), \quad u \in W, \quad t \in \mathbb{T} \backslash\{T\},  \\
	& \mu^n_t(B\times \{u\}):=\overline{\mathbb{P}}\left[\overline{X}^n_t \in B, \, \overline{U}^n_t=u,\, \overline{\tau}=t \right], \quad B \in \mathcal{B}(S), \quad u \in W,\quad t \in \mathbb{T}.
\end{align*}
 Then, we have that \( (\mu^n, m^n) \in \mathcal{R}^{\star}(\bar{\mu}^n, \bar{m}^n) = \mathcal{R}[\bar{m}^n;\alpha^{\lambda}(\overline{\mu}^n,\overline{m}^n)] \) by an argument similar to that  in subsection \ref{Derivation of the constraint}.

We next proceed to show that $(\mu^n, m^n) \rightharpoonup (\mu, m)$. For any \(\varphi\in C_{b}( S\times W) \), we have
\begin{align*}
\int_{S\times W}\varphi(x,u)m^n_t(dx,du)-\int_{S\times W}\varphi(x,u)m_t(dx,du)&=\mE^{\overline{\mP}}[\varphi(\overline{X}^n_t,\overline{U}^n_t)\mathbf{1}_{t<\bar{\tau}}]-\mE^{\overline{\mP}}[\varphi(\overline{X}_t,\overline{U}_t)\mathbf{1}_{t<\bar{\tau}}].
\end{align*}
  By the a.s. convergence of $(\overline{X}^n_t, \overline{U}^n_t)$ to $(\overline{X}_t, \overline{U}_t)$ and the Bounded Convergence Theorem, the right-hand side converges to zero. A similar argument holds for $\mu^n$. Therefore,  $(\mu^n, m^n) \rightharpoonup (\mu, m)$, which completes the proof of lower hemicontinuity.
\end{proof}

By a direct modification of the proof of Lemma \ref{setR*contin}, we can get the next result.

\begin{lemma}\label{sethatr_con}
Suppose that Assumptions \ref{a1new:P^0}and \ref{a3:minor}  hold. Then,  the set-valued mapping $(m, \alpha) \mapsto \mathcal{R}[m; \alpha]$ is continuous from $\mathcal{C}_m \times \mathcal{A}$ to $2^\mathcal{V}$.
\end{lemma}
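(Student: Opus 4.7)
The plan is to mirror the two-step argument in the proof of Lemma \ref{setR*contin}, with the only change being that the regularized control $\alpha^\lambda(\overline{\mu},\overline{m})$, previously a continuous function of the mean-field inputs, is replaced by the independent argument $\alpha \in \mathcal{A}$. Consequently, Remark \ref{remark:alphacontinu} is no longer needed, and all the continuity requirements are supplied directly by Assumption \ref{a1new:P^0} together with Remark \ref{remark:pi0}, which ensures joint continuity of $\pi^0_{t+1}(u,u';\alpha,m)$ in $(\alpha,m)$ and hence of the full kernel $\pi_t(x,u;\alpha,m;dx',du')$ on $S\times W\times\mathcal{P}(A)\times\mathcal{P}^{\text{sub}}(S)$.

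For upper hemicontinuity, I would invoke the Closed Graph Theorem on the compact space $\mathcal{V}$ and take a sequence $(\tilde{\mu}^n,\tilde{m}^n)\in \mathcal{R}[m^n;\alpha^n]$ with $(\tilde{\mu}^n,\tilde{m}^n)\rightharpoonup(\tilde{\mu},\tilde{m})$ and $(m^n,\alpha^n)\rightharpoonup(m,\alpha)$. Each pair satisfies the constraint in Definition \ref{constraint} for $\varphi\in C_b(\mathbb{T}\times S\times W)$. I would pass to the limit exactly as in Step~1 of Lemma \ref{setR*contin}, rewriting the generator term as
\begin{align*}
\int_{S\times W}\mathcal{L}(\varphi)(t,x,u;\alpha^n_t(u),m^n_t(u))\,\tilde{m}^n_t(dx,du)
&=\sum_{u,u'\in W}\pi^0_{t+1}(u,u';\alpha^n_t(u),m^n_t(u))\\
&\quad\times\int_{S}\!\Bigl(\int_{S}\varphi(t{+}1,x',u')P_{t+1}(x,u,m^n_t(u),dx')\Bigr)\tilde{m}^n_t(dx,\{u\})\\
&\quad -\int_{S\times W}\varphi(t,x,u)\,\tilde{m}^n_t(dx,du),
\end{align*}
and applying the uniform-continuity argument of Remark \ref{remark:pi0} together with the weak convergence of $\tilde{m}^n_t$ to obtain the corresponding identity in the limit, placing $(\tilde{\mu},\tilde{m})\in\mathcal{R}[m;\alpha]$.

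For lower hemicontinuity, I would take $(m^n,\alpha^n)\rightharpoonup(m,\alpha)$ and any $(\mu,\tilde m)\in\mathcal{R}[m;\alpha]$ and, as in Step~2 of Lemma \ref{setR*contin}, invoke the construction of Appendix \ref{appendixB} to build a single filtered probability space $(\Omega,\mathcal{F},\mathbb{F},P)$ carrying $\mathbb{F}$-Markov chains $(\tilde X^n,\tilde U^n)$ with kernel $\pi(\,\cdot\,;\alpha^n,m^n)$ and $(\tilde X,\tilde U)$ with kernel $\pi(\,\cdot\,;\alpha,m)$, sharing the initial law $m_0^\ast$ and satisfying $(\tilde X^n_k,\tilde U^n_k)\to(\tilde X_k,\tilde U_k)$ a.s.\ for every $k\in\mathbb{T}$. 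Then, following Theorem~18 of \cite{Peter2023}, I would enlarge the space to also carry an $\overline{\mathbb{F}}$-stopping time $\overline\tau$ realizing the occupation measures $(\mu,\tilde m)$, and simply define
\begin{align*}
m^n_t(B\times\{u\})&:=\overline{\mathbb{P}}[\overline X^n_t\in B,\overline U^n_t=u,t<\overline\tau],\\
\mu^n_t(B\times\{u\})&:=\overline{\mathbb{P}}[\overline X^n_t\in B,\overline U^n_t=u,\overline\tau=t].
\end{align*}
These lie in $\mathcal{R}[m^n;\alpha^n]$ (by the derivation of Section \ref{Derivation of the constraint}) and converge weakly to $(\mu,\tilde m)$ by a.s.\ convergence and the Bounded Convergence Theorem against $\varphi\in C_b(S\times W)$.

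The main obstacle, as in the original, is the lower hemicontinuity step, which relies on the existence of the a.s.-convergent coupling of the Markov chains provided in Appendix \ref{appendixB}; once this is available, the remainder is a straightforward copy-paste adaptation of Lemma \ref{setR*contin}, using that $(\alpha,m)$ now enters linearly and continuously into every kernel without needing any additional regularity on a best-response selection.
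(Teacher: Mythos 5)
Your proposal is correct and is exactly what the paper intends: the paper proves this lemma by "a direct modification of the proof of Lemma \ref{setR*contin}", which is precisely your plan of treating $\alpha$ as a free argument (so Remark \ref{remark:alphacontinu} is no longer needed), using Assumption \ref{a1new:P^0} and Remark \ref{remark:pi0} for the closed-graph/upper hemicontinuity step, and the Appendix \ref{appendixB} coupling (which is already stated there for general sequences $\alpha^n\to\alpha$, $m^n\to m$) together with the Theorem~18 construction from \cite{Peter2023} for lower hemicontinuity.
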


\begin{theorem}\label{them:regular_existence}
Suppose that Assumptions \ref{a1new:P^0}-\ref{a3:minor} are satisfied, for every $\lambda>0$, there exists a regularized equilibrium.
\end{theorem}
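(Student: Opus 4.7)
The plan is to apply the Kakutani--Fan--Glicksberg fixed-point theorem to the mapping $\Phi_\lambda$. The ambient set $\mathcal{C}_\mu \times \mathcal{C}_m$ is a nonempty, convex, compact subset of a locally convex Hausdorff space (each factor sits inside the product of spaces of finite signed measures on $S$, equipped with the weak topology, which is metrizable and locally convex since $S$ and $W$ are compact). So I only need to verify that $\Phi_\lambda$ has nonempty convex values and closed graph.

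For nonemptiness of $\Phi_\lambda(\mu,m)$, Lemma~\ref{lemma:r[m,a]} yields that $\mathcal{R}[m;\alpha^\lambda(\mu,m)]$ is nonempty and compact, and Lemma~\ref{lemma:Gamma} gives the continuity of $J^1(\cdot\,;\mu,m)$, so the argmax set $\hat{\Theta}(\mu,m,\alpha^\lambda(\mu,m))$ is nonempty. Remark~\ref{remark:convex} then yields that $\Phi_\lambda(\mu,m) = \Gamma(\hat{\Theta}(\mu,m,\alpha^\lambda(\mu,m)), \alpha^\lambda(\mu,m), m)$ is nonempty. For convexity, since in the regularized problem the best response of the major player is the unique control $\alpha^\lambda(\mu,m)$, the image $\Phi_\lambda(\mu,m)$ is exactly a set of the form treated by Lemma~\ref{lem:properties_gamma_theta} with $\alpha$ held fixed; hence it is convex. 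This is precisely the feature the entropy regularization was introduced to secure, as contrasted with the non-convex example for the original mapping $\Phi$.

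For the closed graph, I decompose $\Phi_\lambda$ into three building blocks. First, by Remark~\ref{remark:alphacontinu}, the mapping $(\mu,m) \mapsto \alpha^\lambda(\mu,m)$ is continuous. Second, define the argmax correspondence
\[
\Theta(\mu,m) \;:=\; \argmax_{(\tilde{\mu},\tilde{m}) \in \mathcal{R}^*(\mu,m)} J^1(\tilde{\mu}, \tilde{m};\mu, m).
\]
By Lemma~\ref{setR*contin}, $\mathcal{R}^*$ is continuous (both upper and lower hemicontinuous) with nonempty compact values, and by Lemma~\ref{lemma:Gamma} the objective $J^1$ is jointly continuous. Berge's Maximum Theorem therefore yields that $\Theta$ is upper hemicontinuous with nonempty compact values. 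Third, Lemma~\ref{lemma:conditioning} gives that $\Gamma$ has a closed graph. Now, to verify the closed graph of $\Phi_\lambda$, take any sequence $(\mu^n, m^n) \to (\mu, m)$ and $(\mu'^{\,n}, m'^{\,n}) \in \Phi_\lambda(\mu^n, m^n)$ with $(\mu'^{\,n}, m'^{\,n}) \to (\mu', m')$. Pick $(\tilde{\mu}^n, \tilde{m}^n) \in \Theta(\mu^n,m^n)$ with $(\mu'^{\,n},m'^{\,n}) \in \Gamma((\tilde{\mu}^n, \tilde{m}^n), \alpha^\lambda(\mu^n,m^n), m^n)$. Since $\mathcal{V}$ is compact, pass to a subsequence so that $(\tilde{\mu}^n, \tilde{m}^n) \to (\tilde{\mu}, \tilde{m})$. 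Upper hemicontinuity of $\Theta$ places $(\tilde{\mu}, \tilde{m}) \in \Theta(\mu, m)$; continuity of $\alpha^\lambda$ gives $\alpha^\lambda(\mu^n,m^n) \to \alpha^\lambda(\mu,m)$; the closed graph of $\Gamma$ then gives $(\mu', m') \in \Gamma((\tilde{\mu}, \tilde{m}), \alpha^\lambda(\mu,m), m) \subset \Phi_\lambda(\mu, m)$.

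With nonempty convex values and closed graph over the compact convex domain $\mathcal{C}_\mu \times \mathcal{C}_m$, the Kakutani--Fan--Glicksberg theorem (e.g., Corollary~17.55 in \cite{aliprantis06}) yields a fixed point $(\mu^*, m^*) \in \Phi_\lambda(\mu^*, m^*)$, which by Definition~\ref{def:regul} is a $\lambda$-regularized equilibrium. The main technical step is the closed-graph verification, whose delicate ingredient is Berge's theorem applied to $\Theta$; this ultimately rests on the full continuity (both directions) of $\mathcal{R}^*$ furnished by Lemma~\ref{setR*contin}, where the lower-hemicontinuity half requires the coupling construction on the extended probability space that transfers randomized stopping times between the approximating and limiting kernels.
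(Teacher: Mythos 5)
Your proposal is correct and follows essentially the same route as the paper: Kakutani--Fan--Glicksberg on $\Phi_\lambda$ over the compact convex set $\mathcal{C}_\mu \times \mathcal{C}_m$, with nonempty convex values via Remark~\ref{remark:convex}, and the closed graph obtained from the continuity of $\alpha^\lambda$ (Remark~\ref{remark:alphacontinu}), Berge's Maximum Theorem applied to the argmax correspondence using Lemmas~\ref{setR*contin} and~\ref{lemma:Gamma}, and the closed graph of $\Gamma$ (Lemma~\ref{lemma:conditioning}). The subsequence extraction in $\mathcal{V}$ and the final assembly match the paper's argument step for step.
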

\begin{proof}
	 The proof consists of applying the Kakutani-Fan-Glicksberg fixed-point theorem to the set-valued mapping $\Phi_\lambda: \mathcal{C}_\mu \times \mathcal{C}_m \to 2^{\mathcal{C}_\mu \times \mathcal{C}_m}$, defined as:
\[
    \Phi_\lambda(\mu, m) = \Gamma\big(\hat{\Theta}(\mu, m, \alpha^\lambda(\mu,m)), \alpha^\lambda(\mu,m), m\big).
\] First, note that $\mathcal{C}_{\mu}\times\mathcal{C}_{m}$ is nonempty, compact, convex and is included in
	$$
	\prod_{t \in \mathbb{T}} \mathcal{M}^s(S)^{W} \times \prod_{t \in \mathbb{T} \backslash\{T\}} \mathcal{M}^s(S)^{W},
	$$
	which is a locally convex Hausdorff space. Moreover,  Remark \ref{remark:convex} implies that  $\Phi_\lambda$  has nonempty convex values. 
   We now show that $\Phi_\lambda$ has a closed graph. Let $((\mu^n, m^n), (\mu'^n, m'^n))$ be a sequence in the graph of $\Phi_\lambda$ converging to $((\mu, m), (\mu', m'))$. We must show that $(\mu', m') \in \Phi_\lambda(\mu, m)$. The proof proceeds by leveraging the continuity of the intermediate mappings. Let $\alpha^\lambda_n \coloneqq \alpha^\lambda(\mu^n, m^n)$ and let $(\tilde{\mu}^n, \tilde{m}^n) \in \hat{\Theta}(\mu^n, m^n, \alpha^\lambda_n)$ be a sequence such that $(\mu'^n, m'^n) \in \Gamma((\tilde{\mu}^n, \tilde{m}^n), \alpha^\lambda_n, m^n)$.
\begin{enumerate}
    \item By Remark \ref{remark:alphacontinu}, $\alpha^\lambda_n \to \alpha^\lambda(\mu, m)$.
    \item The mapping $(\mu,m) \mapsto \hat{\Theta}(\mu, m, \alpha^\lambda(\mu,m))$ is upper hemicontinuous and has nonempty compact values by Berge's Maximum Theorem (e.g., Theorem 17.31 in \cite{aliprantis06}), for which Lemmas \ref{lemma:Gamma} and \ref{setR*contin} are required. This implies that the sequence $(\tilde{\mu}^n, \tilde{m}^n)$ has a convergent subsequence whose limit $(\tilde{\mu}, \tilde{m})$  lies in $\hat{\Theta}(\mu, m, \alpha^\lambda(\mu,m))$ (cf. Theorem 17.16 in \cite{aliprantis06}).
    \item By Lemma \ref{lemma:conditioning}, the conditioning mapping $\Gamma$ has a closed graph, which implies that $$(\mu', m') \in \Gamma((\tilde{\mu}, \tilde{m}), \alpha^\lambda(\mu, m), m).$$
\end{enumerate}
This establishes that $(\mu', m') \in \Phi_\lambda(\mu, m)$, so the graph is closed.
    With the help of Kakutani-Fan-Glicksberg’s fixed point theorem, we conclude that a regularized equilibrium exists.
\end{proof}

\section{Existence of Relaxed Equilibrium}\label{sect:relaxed}

Back to our original MFG problem, we need to address the existence of the relaxed equilibrium in Definition \ref{def:relaxed}. As a direct modification of Lemma 1 in \cite{erhan2024}, we first have the next result.

\begin{lemma}\label{lemma:H}
Let Assumptions \ref{a1new:P^0}, \ref{a2new:f0g0}, and \ref{a:A} hold. Then, for sufficiently small \( \lambda > 0 \) and for any $t\in \mathbb{T}\setminus\{T\}$,
\[
\sup_{u \in W} |\mathcal{H}(\alpha^{\lambda}_t(u))| \leq \kappa_{1} + \kappa_2 |\ln\lambda| + \ell \ln(1 + \sup_{u \in W}|V^{\lambda}(t+1,u)|),
\]
where \( \kappa_{1} \) and \( \kappa_{2} \) are positive constants independent of \( \lambda \).
\end{lemma}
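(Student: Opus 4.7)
The approach will closely mirror the proof of Lemma~1 in \cite{erhan2024}, adapted to the major-minor setup. The starting point is the explicit Gibbs-measure form of the optimal regularized control derived in Section~\ref{sect:regul}:
$$\alpha^{\lambda}_t(u)(a)=\frac{e^{Q^{\lambda}_t(u,a)/\lambda}}{Z^{\lambda}_t(u)},\qquad Z^{\lambda}_t(u):=\int_A e^{Q^{\lambda}_t(u,a)/\lambda}\,da,$$
where $Q^{\lambda}_t(u,a):=f^{0}_t(\Psi_t(u),a,m_t(u))+\sum_{u'\in W}V^{\lambda}(t+1,u')P^{0}_{t+1}(u,a,m_t(u),u')$. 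Assumptions \ref{a1new:P^0}(i) and \ref{a2new:f0g0}(i), combined with the finiteness of $S^{0}$, imply that $a\mapsto Q^{\lambda}_t(u,a)$ is Lipschitz with a constant $L^{\lambda}_t\le C_0(1+V^{*}_{t+1})$, where $V^{*}_{t+1}:=\sup_{u'\in W}|V^{\lambda}(t+1,u')|$ and $C_0$ is independent of $\lambda$. Setting $M^{\lambda}_t(u):=\sup_{a\in A}Q^{\lambda}_t(u,a)$, the entropy satisfies the identity
$$\mathcal{H}(\alpha^{\lambda}_t(u))=\ln Z^{\lambda}_t(u)-\lambda^{-1}\int_A Q^{\lambda}_t(u,a)\alpha^{\lambda}_t(u)(a)\,da.$$

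The key step is a two-sided estimate on $Z^{\lambda}_t(u)$. The upper bound $Z^{\lambda}_t(u)\le \mathrm{Leb}(A)\,e^{M^{\lambda}_t(u)/\lambda}$ is immediate. For the matching lower bound, I pick a near-maximizer $a^{\star}$ of $Q^{\lambda}_t(u,\cdot)$ and invoke Assumption \ref{a:A} to obtain a cone (or a one-sided interval when $\ell=1$) with vertex $a^{\star}$, opening angle $\iota$, and radius $r\le\vartheta$, contained in $A$ and of Lebesgue measure at least $c_{\iota}r^{\ell}$ for a geometric constant $c_{\iota}>0$. Lipschitz continuity gives $Q^{\lambda}_t(u,a)\ge M^{\lambda}_t(u)-L^{\lambda}_t r$ on this set, hence
$$Z^{\lambda}_t(u)\ge c_{\iota}r^{\ell}\,e^{(M^{\lambda}_t(u)-L^{\lambda}_t r)/\lambda},\qquad r\in(0,\vartheta].$$
Plugging both bounds into the entropy identity, and using $Q^{\lambda}_t(u,\cdot)\le M^{\lambda}_t(u)$ to control the integral term from above by $M^{\lambda}_t(u)/\lambda$, I get
$$\mathcal{H}(\alpha^{\lambda}_t(u))\ge -L^{\lambda}_t r/\lambda+\ell\ln r+\ln c_{\iota}\qquad\text{and}\qquad \mathcal{H}(\alpha^{\lambda}_t(u))\le\ln\mathrm{Leb}(A).$$
Choosing $r=\lambda/L^{\lambda}_t$, which lies in $(0,\vartheta]$ once $\lambda$ is sufficiently small, then using $\ln L^{\lambda}_t\le \ln C_0+\ln(1+V^{*}_{t+1})$ and $-\ln\lambda=|\ln\lambda|$ for $\lambda<1$, yields
$$|\mathcal{H}(\alpha^{\lambda}_t(u))|\le \kappa_1+\ell|\ln\lambda|+\ell\ln(1+V^{*}_{t+1}),$$
uniformly in $u\in W$, which is the claimed bound with $\kappa_2=\ell$. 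The discrete case $A=\{0,1\}$ is handled separately and trivially through $|\mathcal{H}|\le\ln 2$.

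The main obstacle is producing the Lebesgue-lower bound of order $r^{\ell}$ near the maximizer, which is precisely why the uniform interior cone property in Assumption~\ref{a:A} (and its one-dimensional analogue) is essential: without it, $a^{\star}$ could sit at a corner of $A$ where the volume of admissible neighbourhoods decays faster than $r^{\ell}$, destroying the clean $\ell|\ln\lambda|$ scaling. A secondary subtlety is that $L^{\lambda}_t$ may itself depend on $\lambda$ through $V^{*}_{t+1}$, so that the choice $r=\lambda/L^{\lambda}_t$ requires $\lambda$ small relative to $L^{\lambda}_t\vartheta^{-1}$; this is absorbed by the qualifier "for sufficiently small $\lambda$" in the statement and is reconciled with a priori bounds on $V^{\lambda}$ in the subsequent vanishing-regularization argument of Section~\ref{sect:relaxed}.
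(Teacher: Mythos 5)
Your proposal follows exactly the route the paper intends: the paper gives no self-contained proof but invokes ``a direct modification of Lemma 1 in \cite{erhan2024}'', and that argument is precisely your Gibbs/Laplace-type estimate --- the entropy identity $\mathcal{H}(\alpha^{\lambda}_t(u))=\ln Z^{\lambda}_t(u)-\lambda^{-1}\int_A Q^{\lambda}_t(u,a)\,\alpha^{\lambda}_t(u)(a)\,da$, the trivial upper bound $\mathcal{H}\le\ln\operatorname{Leb}(A)$, the cone condition of Assumption~\ref{a:A} giving $Z^{\lambda}_t(u)\ge c_{\iota}r^{\ell}e^{(M^{\lambda}_t(u)-L^{\lambda}_t r)/\lambda}$, and the Lipschitz bound $L^{\lambda}_t\le C_0(1+\sup_{u'}|V^{\lambda}(t+1,u')|)$ from Assumptions~\ref{a1new:P^0}(i) and \ref{a2new:f0g0}(i). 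The resulting constants ($\kappa_2=\ell$) match the statement.

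One step is justified incorrectly, though the estimate itself is easily repaired. You choose $r=\lambda/L^{\lambda}_t$ and assert it lies in $(0,\vartheta]$ ``once $\lambda$ is sufficiently small''; this is backwards, since the constraint is $\lambda\le L^{\lambda}_t\vartheta$ and it fails precisely when $L^{\lambda}_t$ is small (and is vacuous when $L^{\lambda}_t=0$, e.g.\ if $f^0$ and $P^0$ do not depend on $a$), a regime that shrinking $\lambda$ does not help. Your further remark that this is ``reconciled with a priori bounds on $V^{\lambda}$ in Section~\ref{sect:relaxed}'' is also circular, because Lemma~\ref{lemma:Vlambda} is itself deduced from Lemma~\ref{lemma:H}; moreover an upper bound on $V^{\lambda}$ only bounds $L^{\lambda}_t$ from above, which makes $\lambda/L^{\lambda}_t$ larger, not smaller. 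The correct fix is local and one line: take $r=\min\{\vartheta,\lambda/L^{\lambda}_t\}$ (or treat $L^{\lambda}_t\le\lambda/\vartheta$ as a separate case). If $\lambda/L^{\lambda}_t\le\vartheta$ your computation goes through unchanged; if $\lambda/L^{\lambda}_t>\vartheta$ then $L^{\lambda}_t\vartheta/\lambda<1$ and the same lower bound with $r=\vartheta$ gives $\mathcal{H}(\alpha^{\lambda}_t(u))\ge \ln c_{\iota}+\ell\ln\vartheta-1$, a constant, so the claimed inequality holds a fortiori. With that adjustment the proof is complete and uniform in $u$ and $t$, as required.
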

Then, by the dynamic programming equation, we obtain
\[
\sup_{u \in W} |V^{\lambda}(t, u)| \leq M + \kappa_1 \lambda + \kappa_2 \lambda |\ln \lambda| + \ell \lambda \ln(1 + \sup_{u \in W} |V^{\lambda}(t+1, u)|) + \sup_{u \in W} |V^{\lambda}(t+1, u)|,
\]
where $M$ is a positive function independent of \( \lambda \).
Thus, by induction, we arrive at the following result:
\begin{lemma}\label{lemma:Vlambda}
Let Assumptions \ref{a1new:P^0}, \ref{a2new:f0g0}, and \ref{a:A} hold.	Given \( \lambda_n \in (0, 1] \) with \( \lambda_n \downarrow 0 \), we have \[\sup_{n,t,u} |V^{\lambda_n}(t,u)| < \infty .\]
\end{lemma}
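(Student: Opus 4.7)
The plan is to carry out a straightforward backward induction on $t \in \mathbb{T}$, using Lemma~\ref{lemma:H} as the input to the dynamic programming inequality already displayed just above the lemma statement. For each $t \in \mathbb{T}$, define the candidate uniform bound
\[
    B_t := \sup_{n \geq 1} \sup_{u \in W} |V^{\lambda_n}(t,u)|,
\]
and the goal is to prove $B_t < \infty$ for every $t$, since there are only finitely many time indices.

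First I would handle the terminal case $t = T$. Here $V^{\lambda_n}(T,u) = g^0(\Psi_T(u), \mu_T(u))$. Since $S^0$ is finite, $\mathcal{P}^{\mathrm{sub}}(S)$ is compact in the weak topology (as $S$ is compact), and $g^0$ is continuous in $(x,\mu)$ by Assumption~\ref{a2new:f0g0}(ii), the function $g^0$ is bounded on $S^0 \times \mathcal{P}^{\mathrm{sub}}(S)$. Hence $B_T \leq \|g^0\|_\infty < \infty$, uniformly in $n$ (the bound is even independent of the mean-field flow $\mu$).

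For the inductive step, assume $B_{t+1} < \infty$. Applying the dynamic programming inequality stated just before Lemma~\ref{lemma:Vlambda} to $\lambda = \lambda_n \in (0,1]$ and taking the supremum over $u \in W$, one obtains
\[
    \sup_{u \in W}|V^{\lambda_n}(t,u)| \leq M + \kappa_1 \lambda_n + \kappa_2 \lambda_n |\ln \lambda_n| + \ell\, \lambda_n \ln\!\bigl(1 + \sup_{u \in W}|V^{\lambda_n}(t+1,u)|\bigr) + \sup_{u \in W} |V^{\lambda_n}(t+1,u)|.
\]
Now I exploit $\lambda_n \in (0,1]$: the function $\lambda \mapsto \lambda|\ln \lambda|$ is bounded on $(0,1]$ (attains its maximum $1/e$), so $\kappa_1 \lambda_n + \kappa_2 \lambda_n|\ln \lambda_n| \leq \kappa_1 + \kappa_2/e =: C_1$. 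Also $\lambda_n \leq 1$ and monotonicity of $\ln(1+\cdot)$ together with the inductive hypothesis give $\ell\,\lambda_n \ln(1 + \sup_u|V^{\lambda_n}(t+1,u)|) \leq \ell\ln(1 + B_{t+1})$. Consequently
\[
    B_t \leq M + C_1 + \ell \ln(1 + B_{t+1}) + B_{t+1} < \infty.
\]

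Iterating this estimate from $t = T$ backwards to $t = 0$ yields finite bounds $B_0, B_1, \dots, B_T$, whence $\sup_{n,t,u} |V^{\lambda_n}(t,u)| = \max_{0 \leq t \leq T} B_t < \infty$. There is no genuine obstacle; the only point that requires attention is the observation that restricting to $\lambda_n \in (0,1]$ is precisely what neutralizes the potentially unbounded factor $|\ln \lambda|$ in Lemma~\ref{lemma:H} (through the $\lambda|\ln \lambda|$ term in the DPP bound) and reduces $\lambda_n \ln(1 + B_{t+1})$ to $\ln(1 + B_{t+1})$, so that the backward recursion closes in finite time.
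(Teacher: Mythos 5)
Your proof is correct and follows essentially the same route as the paper: the paper also combines the displayed DPP inequality with Lemma~\ref{lemma:H} and concludes "by induction," and your write-up simply makes explicit the terminal bound on $g^0$ (via compactness and Assumption~\ref{a2new:f0g0}) and the elementary facts $\lambda_n|\ln\lambda_n|\leq 1/e$ and $\lambda_n\leq 1$ that close the backward recursion uniformly in $n$.
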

We are now ready to present the main result.
\begin{theorem}\label{thm:existencerelaxed}
Under Assumptions \ref{a1new:P^0}-\ref{a3:minor}, we have the following results:
\begin{itemize}
    \item If  $A\subset \mR^\ell$  with \( \text{Leb}(A) > 0 \) and Assumption \ref{a:A} holds, then a relaxed equilibrium exists.
    \item If \( A = \{0, 1\} \), no further assumption is needed, and a relaxed equilibrium exists.
\end{itemize}
\end{theorem}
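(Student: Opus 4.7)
My plan is a sequential compactness argument that sends the regularization parameter to zero. Fix $\lambda_n \downarrow 0$ and let $(\mu^{*,n}, m^{*,n})$ be a $\lambda_n$-regularized equilibrium produced by Theorem \ref{them:regular_existence}, with major-player response $\alpha^{*,n} \coloneqq \alpha^{\lambda_n}(\mu^{*,n}, m^{*,n})$ and minor-player occupation measure $(\tilde{\mu}^{*,n}, \tilde{m}^{*,n}) \in \hat{\Theta}(\mu^{*,n}, m^{*,n}, \alpha^{*,n})$ satisfying $(\mu^{*,n}, m^{*,n}) \in \Gamma((\tilde{\mu}^{*,n}, \tilde{m}^{*,n}), \alpha^{*,n}, m^{*,n})$. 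Because $\mathcal{C}_\mu \times \mathcal{C}_m \times \mathcal{A} \times \mathcal{V}$ is compact, along a subsequence the quadruple $(\mu^{*,n}, m^{*,n}, \alpha^{*,n}, (\tilde{\mu}^{*,n}, \tilde{m}^{*,n}))$ converges to some $(\mu^*, m^*, \alpha^*, (\tilde{\mu}^*, \tilde{m}^*))$. The task is then to verify the three clauses of Definition \ref{def:relaxed} for this limit.

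The crux is the major player's optimality $\alpha^* \in \mathbb{A}(\mu^*, m^*)$. For any test control $\beta \in \mathcal{D}$, the optimality of $\alpha^{*,n}$ for $J^0_{\lambda_n}$ yields
\[
J^0(\alpha^{*,n}; \mu^{*,n}, m^{*,n}) + \lambda_n\, \mathbb{E}\Bigl[\sum_{t=0}^{T-1}\mathcal{H}(\alpha^{*,n}_t(U_t))\Bigr] \;\geq\; J^0(\beta; \mu^{*,n}, m^{*,n}) + \lambda_n\, \mathbb{E}\Bigl[\sum_{t=0}^{T-1}\mathcal{H}(\beta_t(U_t))\Bigr].
\]
Lemmas \ref{lemma:H} and \ref{lemma:Vlambda} bound the entropy of $\alpha^{*,n}$ by $\kappa_1+\kappa_2|\ln\lambda_n|+\ell\ln(1+C)$ in the continuous case (so $\lambda_n$ times this vanishes) and by $\ln 2$ when $A=\{0,1\}$; either way the $\lambda_n$-term on the left disappears in the limit. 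To cover an arbitrary $\beta \in \mathcal{A}$, I will pre-regularize: construct $\beta^{\epsilon} \in \mathcal{D}$ approximating $\beta$ with finite entropy $C_\epsilon$, by convolving pointwise-in-$u$ with a smooth bump of radius $\epsilon$ after a slight interior shift (using Assumption \ref{a:A} to keep the support inside $A$) when $\operatorname{Leb}(A)>0$, or by the mixture $(1-\epsilon)\beta+\epsilon\cdot\operatorname{Unif}(A)$ when $A=\{0,1\}$. Sending $n\to\infty$ in the inequality (using continuity of $J^0$ in all its arguments, which follows from Assumptions \ref{a1new:P^0}--\ref{a2new:f0g0} together with continuity of the marginals $p_t(\cdot;\alpha,m)$) and then $\epsilon\downarrow 0$ gives $J^0(\alpha^*;\mu^*,m^*)\geq J^0(\beta;\mu^*,m^*)$ for every $\beta\in\mathcal{A}$.

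The remaining two clauses are standard consequences of the continuity lemmas. Upper hemicontinuity from Lemma \ref{sethatr_con} yields $(\tilde{\mu}^*,\tilde{m}^*)\in\mathcal{R}[m^*;\alpha^*]$; for any competitor $(\bar{\mu},\bar{m})\in\mathcal{R}[m^*;\alpha^*]$, lower hemicontinuity provides approximants $(\bar{\mu}^n,\bar{m}^n)\in\mathcal{R}[m^{*,n};\alpha^{*,n}]$ with $(\bar{\mu}^n,\bar{m}^n)\rightharpoonup(\bar{\mu},\bar{m})$, and passing to the limit in the inequality $J^1(\tilde{\mu}^{*,n},\tilde{m}^{*,n};\mu^{*,n},m^{*,n})\geq J^1(\bar{\mu}^n,\bar{m}^n;\mu^{*,n},m^{*,n})$ via Lemma \ref{lemma:Gamma} delivers $(\tilde{\mu}^*,\tilde{m}^*)\in\hat{\Theta}(\mu^*,m^*,\alpha^*)$. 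The consistency $(\mu^*,m^*)\in\Gamma((\tilde{\mu}^*,\tilde{m}^*),\alpha^*,m^*)$ is immediate from the closed-graph property of $\Gamma$ in Lemma \ref{lemma:conditioning}. Assembling these three facts shows $(\mu^*,m^*)\in\Phi(\mu^*,m^*)$, i.e.\ a relaxed equilibrium exists.

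The main obstacle is the smoothing step in the continuous-action case: one must produce approximants $\beta^\epsilon\in\mathcal{D}$ whose entropies are finite at each fixed $\epsilon$ (so that the $\lambda_n$-penalty vanishes first in the $n\to\infty$ limit) while still recovering $J^0(\beta;\cdot,\cdot)$ as $\epsilon\downarrow 0$, uniformly enough in $(\mu,m)$ over the compact index set. The local cone condition in Assumption \ref{a:A} is precisely what makes a convolution-after-inward-shift construction work pointwise on $A$; without it, boundary-concentrated controls may resist being approximated by absolutely continuous densities supported in $A$. When $A=\{0,1\}$ the construction is trivial and Assumption \ref{a:A} becomes unnecessary, which accounts for the dichotomy in the theorem's statement.
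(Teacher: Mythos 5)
Your proposal is correct, but it reaches the key step---the major player's optimality of the limit control---by a genuinely different route than the paper. The paper works through dynamic programming: it extracts a limit $V$ of the regularized value functions $V^{\lambda_n}$ (via Lemmas \ref{lemma:H}--\ref{lemma:Vlambda}), uses the explicit Gibbs form of $\alpha^{\lambda_n}$ to show the limiting control only charges the argmax sets $E_t(V(t+1,\cdot),u,m_t)$ (this is where Assumption \ref{a:A} enters, through a Lebesgue lower bound on the near-maximizer sets), then passes to the limit in the regularized Bellman equation and concludes by a verification argument. You instead argue variationally: the regularized optimality inequality against a fixed finite-entropy competitor, the bounds of Lemmas \ref{lemma:H}--\ref{lemma:Vlambda} to kill the $\lambda_n$-penalty on both sides, joint continuity of $J^0$ in $(\alpha,\mu,m)$ (not isolated as a lemma in the paper, but it follows from Assumptions \ref{a1new:P^0}--\ref{a2new:f0g0} exactly as in Remark \ref{remark:pi0}, since $W$ is finite), and a density statement $\sup_{\mathcal{D}}J^0=\sup_{\mathcal{A}}J^0$, which is where Assumption \ref{a:A} enters for you. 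Your route avoids the DPP and the support analysis entirely, at the price of the mollification lemma; the paper's route is longer but also yields the structure of the limiting control and its value function. Three small points to tighten: the ``interior shift'' must be performed pointwise in $a$ (no global inward translation exists for a general $A$; transport each action onto $\mathrm{cone}(a,\iota)\cap B_{\epsilon}(a)$, whose volume is bounded below by $c\,\epsilon^{\ell}$, so the resulting density is bounded and the entropy finite, and one should address measurability of this kernel in $a$---the uniform Lipschitz continuity in $a$ assumed when $\mathrm{Leb}(A)>0$ then gives $|J^0(\beta^{\epsilon})-J^0(\beta)|=O(\epsilon)$ directly); when $A=\{0,1\}$ every distribution already has finite entropy, so $\mathcal{D}$ coincides with $\mathcal{A}$ and no approximation is needed at all; and the uniformity in $(\mu,m)$ you worry about at the end is not required, since you send $n\to\infty$ at fixed $\epsilon$ before letting $\epsilon\downarrow 0$. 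Your treatment of the minor player and of the consistency condition simply unpacks by hand (via the two hemicontinuity directions of Lemma \ref{sethatr_con} together with Lemma \ref{lemma:Gamma}) what the paper gets from Berge's Maximum Theorem, and uses Lemma \ref{lemma:conditioning} in the same way, so those steps match the paper.
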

\begin{proof}
\textbf{Step 1: Major player's problem.} 

\textit{Case 1: $A\subset \mR^\ell$  with \( \text{Leb}(A) > 0 \).}
Since \( a \mapsto f^0_{t}(x, a, m) \) and \( a \mapsto P_{t+1}^0(u, a, m, u') \) are continuous (by Assumptions \ref{a1new:P^0} and \ref{a2new:f0g0} ), and \( A \) is compact, for any \( (t, y, u,m) \in \mathbb{T} \setminus \{T\} \times \mathbb{R}^{|W|} \times W\times\cP^{sub}(S)^{W} \), we have that
\begin{equation}\label{E}
    E_t(y, u, m) := \operatorname{arg \, max}_{a \in A} \left\{ f^0_{t}(\Psi_t(u), a, m(u)) + P^0_{t+1}(u, a, m(u), \cdot) \cdot y \right\}
\end{equation}
is nonempty and compact, where $P^0_{t+1}(u, a, m(u), \cdot):=(P^0_{t+1}(u, a, m(u), u'))_{u'\in W}$. For any \( (t,y,m) \in \mathbb{T} \setminus \{T\}\times \mathbb{R}^{|W|}\times\cP^{sub}(S)^{W} \), consider the collection
\[
\Upsilon_t(y,m) := \{ \alpha \in \cP(A)^{W} : \operatorname{supp}(\alpha(u)) \subset E_t(y, u, m), \quad \forall u \in W \}.
\]

Let \( \{\lambda_n\}_{n \geq 1} \) be a sequence in \( (0, 1] \) with \(\lambda_n \downarrow 0 \). For each \(n\), Theorem~\ref{them:regular_existence} guarantees the existence of a $\lambda_n$-regularized equilibrium. This provides a sequence of tuples \( \{(\mu^n, m^n, \alpha^n, (\tilde{\mu}^n, \tilde{m}^n))\}_{n \geq 1} \), where $\alpha^n = \alpha^{\lambda_n}(\mu^n, m^n)$ is the major player's regularized best response, $(\tilde{\mu}^n, \tilde{m}^n) \in \hat{\Theta}(\mu^n, m^n, \alpha^n)$, and the consistency condition $(\mu^n, m^n) \in \Gamma((\tilde{\mu}^n, \tilde{m}^n), \alpha^n, m^n)$ holds. Since  $\mathcal{C}_{\mu} \times \mathcal{C}_{m}\times\mathcal{A}\times\mathcal{V}$ is compact, the sequence of equilibrium tuples admits a convergent subsequence, which we do not relabel, converging to a limit point \( (\mu, m, \alpha, (\tilde{\mu}, \tilde{m})) \). 

{ Furthermore, because the sequence 
\(\{V^{\lambda_n}(t, u)\}_{n \in \mathbb{N}}\) is bounded in \(\mathbb{R}\) by Lemma \ref{lemma:Vlambda} 
and the time and state spaces \(\mathbb{T}\) and \(W\) are finite, there exists a subsequence, still denoted by \(\{V^{\lambda_n}(t, u)\}_{n \in \mathbb{N}}\), that converges for any \((t,u)\in \mathbb{T}\times W\). We denote the limit by \(V(t,u) \in \mathbb{R}\).}


Now, we claim that \( \alpha_t \in \Upsilon_t(V(t+1,\cdot),m_t) \) for every $t\in\mathbb{T}\setminus\{T\}$, that is, we need to show that \( \alpha_t(u) \in \mathcal{P}(A) \) is supported by the compact set \( E_t(V(t+1,\cdot), u,m_t) \subset A \)  for all \((t, u) \in\mathbb{T}\setminus \{T\} \times W\). Here, $V(t+1,\cdot):=(V(t+1,u'))_{u'\in W}$. Now let us fix a $t\in \mathbb{T} \setminus \{T\}$. Because this holds trivially when \( E_t(V(t+1,\cdot), u,m_t) = A \), we assume \( E_t(V(t+1,\cdot), u,m_t) \subsetneq A\) in the following. Our goal is to prove that for any \( u \in W \), \( \alpha_t(u)(B_r(a_0)) = 0 \) for all \( a_0 \in A \) and \( r > 0 \) such that
\[
\overline{B_r(a_0)} \cap E_t(V(t+1,\cdot), u,m_t) = \emptyset \quad \text{and} \quad B_r(a_0) \subseteq A.
\]
Note that this readily implies \( \operatorname{supp}(\alpha_t(u)) \subset E_t(V(t+1,\cdot), u,m_t) \) for all \( u\in W \), as desired. To this end, take a continuous and bounded function \( h : A \to [0,1] \) such that
\begin{equation}\label{aux:h}
h(a) \equiv 1 \quad \text{for } u \in B_r(a_0) \quad \text{and} \quad h(a) \equiv 0 \quad \text{for } u \notin \overline{B_{r + d/2}(a_0)},
\end{equation}
where \( d := \operatorname{dist}(\overline{B_r(a_0)}, E_t(V(t+1,\cdot), u,m_t)) \) is strictly positive because \( \overline{B_r(a_0)} \) and \( E_t(V(t+1,\cdot), u,m_t) \) are disjoint compact sets. 

Consider \( \beta_t(u) := \max_{a \in A} \{ f_t^0(\Psi_t(u), a, m_{t}(u)) + P^0_{t+1}(u, a, m_t(u), \cdot) \cdot V(t+1,\cdot) \} < \infty \). Note that \( d > 0 \) implies
\[
\varepsilon := \beta_t(u) - \sup \{ f_t^0(\Psi_t(u), a, m_{t}(u)) + P^0_{t+1}(u, a, m_t(u), \cdot) \cdot V(t+1,\cdot): a \in \overline{B_r(a_0)} \} > 0.
\]
 { Also,  the continuity of \( a \mapsto  f_t^0(\Psi_t(u), a, m_{t}(u)) + P^0_{t+1}(u, a, m_t(u), \cdot) \cdot V(t+1,\cdot) \)  ensures that the set of near-maximizers contains an open neighborhood around the maximizer. Moreover, the geometric property of the set~$A$ (Assumption~\ref{a:A}) guarantees this neighborhood has positive volume within~$A$.} It thus follows that
\[
L := \operatorname{Leb} \left( \left\{ a \in A : f_t^0(\Psi_t(u), a, m_{t}(u)) + P^0_{t+1}(u, a, m_t(u), \cdot) \cdot V(t+1,\cdot) {\geq} \beta_t(u) - \frac{\varepsilon}{4} \right\} \right) > 0.
\]
{As \( V^{\lambda_n}(t+1,u) \to V(t+1,u) \), $f_t^0(\Psi_t(u), a, m^n_{t}(u))\rightarrow f_t^0(\Psi_t(u), a, m_{t}(u))$ and $P^0_{t+1}(u, a, m^n_t(u), u')\rightarrow P^0_{t+1}(u, a, m_t(u), u')$ for all \( n \in \mathbb{N} \) large enough}, we have:
\begin{align*}
	&\beta_t(u)- \sup \{ f_t^0(\Psi_t(u), a, m^n_{t}(u)) + P^0_{t+1}(u, a, m^n_t(u), \cdot) \cdot V^n(t+1,\cdot): a \in \overline{B_r(a_0)} \} > \frac{\varepsilon}{2}, \\ 
	&\operatorname{Leb} \left( \left\{ a \in A : f_t^0(\Psi_t(u), a, m^n_{t}(u)) + P^0_{t+1}(u, a, m^n_t(u), \cdot) \cdot V^n(t+1,\cdot) {\geq} \beta_t(u) - \frac{\varepsilon}{2} \right\} \right) \geq \frac{L}{2},
\end{align*}
where the first inequality follows from the Berge Maximum Theorem.
As a result, we have that
\begin{align*}
	&\limsup_{n \to \infty} \int_A h(a) \alpha^{n}_t(u)(da)\\ &\leq \limsup_{n \to \infty} \int_{\overline{B_{r + d/2}(a_0)}} h(a) \frac{e^{\frac{1}{\lambda_n}(f^0_{t}(\Psi_t(u), a, m^n_t(u))+P^0_{t+1}(u, a, m^n_t(u), \cdot) \cdot V^n(t+1,\cdot))}}{\int_Ae^{\frac{1}{\lambda_n}(f^0_{t}(\Psi_t(u), a, m^n_t(u))+P^0_{t+1}(u, a, m^n_t(u), \cdot) \cdot V^n(t+1,\cdot))}(da)}(da)\\
	&= \limsup_{n \to \infty} \int_{\overline{B_{r + d/2}(a_0)}} h(a) \frac{e^{\frac{1}{\lambda_n}(f^0_{t}(\Psi_t(u), a, m^n_t(u))+P^0_{t+1}(u, a, m^n_t(u), \cdot) \cdot V^n(t+1,\cdot) - (\beta_t(u) - \epsilon / 2))}}{\int_A e^{\frac{1}{\lambda_n}(f^0_{t}(\Psi_t(u), a, m^n_t(u))+P^0_{t+1}(u, a, m^n_t(u), \cdot) \cdot V^n(t+1,\cdot) - (\beta_t(u) - \epsilon / 2))}(da)}(da)\\
	&\leq \limsup_{n \to \infty} \int_{\overline{B_{r + d/2}(a_0)}} h(a) \frac{e^{-\frac{\epsilon}{2\lambda_n}}}{\int_{\left\{ a \in A : f_t^0(\Psi_t(u), a, m^n_{t}(u)) + P^0_{t+1}(u, a, m^n_t(u), \cdot) \cdot V^n(t+1,\cdot) \geq \beta_t(u) - \frac{\varepsilon}{2} \right\}}1(da)}(da)                                         \\
	&\leq \limsup_{n \to \infty} \frac{2e^{-\frac{\epsilon}{2 \lambda_n}}}{L} \operatorname{Leb}\left(\overline{B_{r + d/2}(a_0)}\right) = 0. 
\end{align*}
In view that \( h \) is continuous and bounded that satisfies \eqref{aux:h}, it holds that
\begin{align*}
	&\alpha_t(u)\left(\overline{B_r(a_0)}\right) \leq \int_A h(a) \alpha_t(u)(da) = \lim_{n \to \infty} \int_A h(a) \alpha^{n}_t(u)(da) = 0
\end{align*}
The claim \( \alpha_t \in \Upsilon_t(V(t+1, \cdot), m_t) \) for every \( t \in \mathbb{T} \setminus \{T\} \) follows from the arbitrariness of \( t \).

We next prove that for any \((t,u)\in \mathbb{T}\setminus\{T\} \times W\),
\begin{align*}
	&V(t,u) = \int_{A} f_{t}^0(\Psi_t(u), a, m_t(u)) \alpha_t(u)(da) + \sum_{u' \in W} V(t+1, u') \int_{A} P^0_{t+1}(u, a, m_t(u), u') \alpha_t(u)(da),\\
	&V(T,u) = g^0(\Psi_T(u),\mu_T(u)),
\end{align*}
which, combined with the fact that \( \alpha_t \in\Upsilon_t(V(t+1, \cdot), m_t) \) for every \( t \in \mathbb{T} \setminus \{T\} \), implies that \( V \) satisfies the following dynamic programming equation:
\begin{align*}\label{V:dpp}
	&V(t,u)=\sup_{\alpha\in\cP(A)^W}\bigg\{\int_{A} f_{t}^0(\Psi_t(u), a, m_t(u)) \alpha(u)(da) \\&\qquad\qquad\qquad\qquad+\sum_{u'\in W}V(t+1,u')\int_{A}P^0_{t+1}(u,a,m_{t}(u),u'){\alpha}(u)(da)\bigg\},\\
	&V(T,u) = g^0(\Psi_T(u),\mu_T(u)).
\end{align*}
Recall that  for every \( t \in \mathbb{T} \setminus \{T\} \), $V^{\lambda_n}$ satisfies 
\begin{align*}
	V^{\lambda_n}(t,u)&=\int_{A} f_{t}^0(\Psi_t(u), a, m^n_t(u))\alpha_t^{\lambda_n}(u)(da) + \lambda_n \mathcal{H}(\alpha_t^{\lambda_n}(u)) \\&\qquad+\sum_{u'\in W}V^{\lambda_n}(t+1,u')\int_{A}P^0_{t+1}(u,a,m^n_{t}(u),u')\alpha^{\lambda_n}_t(u)(da).
\end{align*}
{Taking the limit as $n \to \infty$ and using Assumptions \ref{a1new:P^0} and \ref{a2new:f0g0}, Lemma \ref{lemma:H} and the reasoning analogous to Remark \ref{remark:pi0}, we obtain the following result:
	\begin{equation*}
		V(t,u) = \int_{A} f_{t}^0(\Psi_t(u), a, m_t(u)) \alpha_t(u)(da) + \sum_{u' \in W} V(t+1, u') \int_{A} P^0_{t+1}(u, a, m_t(u), u') \alpha_t(u)(da).
	\end{equation*}}
 Furthermore, $V(T,u) = g^0(\Psi_T(u),\mu_T(u))$  follows directly from taking the limit.

Finally, by a standard verification argument, we can conclude that \( \alpha \) solves the major player's problem  corresponding to \( (\mu, m) \), that is, $\alpha\in \mathbb{A}(\mu,m)$.

\textit{Case 2: $A = \{0, 1\}$.}
It is clear that for any $t\in \mathbb{T}\setminus\{T\}$,
\begin{equation}\label{entropyalpha2}
|\mathcal{H}(\alpha^{\lambda}_t(u))| \leq 2 \max_{p \in [0, 1]} |p \log(p)| = \frac{2}{e}.
\end{equation}
In a similar fashion, we can obtain the uniform boundedness of the value function $V^{\lambda}$ with $\lambda\in[0,1]$. Note that \( \alpha^{\lambda}_t(u)\in \mathcal{P}(\{0, 1\}) \) for any $(t,u)\in \mathbb{T}\setminus\{T\}\times W$. Therefore, there exists a sequence \( \{\lambda_n\}_{n \in \mathbb{N}} \) in \( (0, 1] \) with \( \lambda_n \to 0 \) such that \( (\mu^n, m^n, \tilde{\mu}^n,\tilde{m}^n, \alpha^{\lambda_n}, V^{\lambda_n}) \) converges to \( (\mu, m, \tilde{\mu},\tilde{m}, \alpha, V) \).

Define for any $(t,u)\in\mathbb{T}\setminus\{T\}\times W$ and $i=0,1$,
\begin{align*}
	g(t,u,i):=f^0_{t}(\Psi_t(u), i, m_t(u))+\sum_{u'\in W}V(t+1,u')P^0_{t+1}(u,i,m_{t}(u),u').
\end{align*}
We claim that, for any  $(t,u)\in \mathbb{T}\setminus\{T\}\times W$,
\begin{equation}\label{alphadpp}
	\begin{split}
			&\alpha_t(u)(1)=0,\,\alpha_t(u)(0)=1,\,\ \text{if}\, g(t,u,0)> g(t,u,1),\\
		&\alpha_t(u)(1)=1,\,\alpha_t(u)(0)=0,\,\ \text{if}\, g(t,u,0)< g(t,u,1).
	\end{split}
\end{equation} 
We prove the case when \( g(t, u, 0) > g(t, u, 1) \). Let \( \epsilon := g(t, u, 0) - g(t, u, 1) > 0 \). 
Recall that
\begin{align*}
	&\alpha^{\lambda_n}_t(u)(1)=\frac{e^{\frac{1}{\lambda_n}g^n(t,u,1)}}{\sum_{i=0}^1e^{\frac{1}{\lambda_n}g^n(t,u,i)}}=\frac{1}{1+e^{\frac{1}{\lambda_n}(g^n(t,u,0)-g^n(t,u,1))}},\quad t\in \mathbb{T}\setminus\{T\},
\end{align*}
where 
\begin{align*}
g^n(t,u,i):=f^0_{t}(\Psi_t(u), i, m^n_t(u))+\sum_{u'\in W}V^{\lambda_n}(t+1,u')P^0_{t+1}(u,i,m^n_{t}(u),u').
\end{align*}
Thanks to Assumptions \ref{a1new:P^0} and \ref{a2new:f0g0}, we obtain that $g^n(t,u,i)\rightarrow g(t,u,i)$ as $n\rightarrow\infty$. Then, $g^n(t,u,0)-g^n(t,u,1)\geq\frac{\epsilon}{2}$ when $n$  is large enough. Hence,
\begin{equation*}
\alpha_t(u)(1)=\lim_{n \rightarrow \infty}\alpha^{\lambda_n}_t(u)(1)\leq \lim_{n \rightarrow \infty} \frac{1}{1+e^{\frac{\epsilon}{2\lambda_n}}}=0.
\end{equation*}
We next proceed to show that
\begin{align*}
	&V(t,u) = \sum_{i=0}^{1}f_{t}^0(\Psi_t(u), i, m_t(u))\alpha_t(u)(i) + \sum_{u' \in W, i\in\{0,1\}} V(t+1, u') P^0_{t+1}(u, i, m_t(u), u') \alpha_t(u)(i),\\
	&V(T,u) = g^0(\Psi_T(u),\mu_T(u)),
\end{align*}
which can be verified by taking the limit in the equations below, noting the uniform bound in \eqref{entropyalpha2}:
\begin{align*}
	&V^{\lambda_n}(t,u) = \sum_{i=0}^{1}f_{t}^0(\Psi_t(u), i, m^n_t(u))\alpha^{\lambda_n}_t(u)(i) +\lambda_n\mathcal{H}(\alpha^{\lambda_n}_t(u))\\&\qquad\qquad+ \sum_{u' \in W, i\in\{0,1\}} V^{\lambda_n}(t+1, u') P^0_{t+1}(u, i, m^n_t(u), u') \alpha^{\lambda_n}_t(u)(i),\\
	&V^{\lambda_n}(T,u) = g^0(\Psi_T(u),\mu^n_T(u)).
\end{align*}
This, together with \eqref{alphadpp}, implies that \( V \) satisfies the dynamic programming equation. Therefore, a standard verification argument yields that \( \alpha \) solves the major player's problem associated with \( (\mu, m) \), i.e., \( \alpha \in \mathbb{A}(\mu,m) \).

{\textbf{Step 2: Minor player's problem}

We have shown in step 1 that \( \alpha \in \mathbb{A}(\mu, m) \). Next, we will show that  \( (\tilde{\mu}, \tilde{m}) \in \hat{\Theta}(\mu, m, \alpha) \).

First, recall that for each $n$, $(\tilde{\mu}^n, \tilde{m}^n) \in \hat{\Theta}(\mu^n, m^n, \alpha^n)$, and we have the convergence $(\mu^n, m^n, \alpha^n) \to (\mu, m, \alpha)$ and $(\tilde{\mu}^n, \tilde{m}^n) \to (\tilde{\mu}, \tilde{m})$.

By Lemmas \ref{lemma:Gamma} and \ref{sethatr_con} and the Berge's Maximum Theorem, we obtain that the set-valued mapping $\hat{\Theta}$ is upper hemicontinuous and compact-valued. As its range space $\mathcal{V}$ is a compact Hausdorff space, the Closed Graph Theorem can be applied to deduce that $\hat{\Theta}$ has a closed graph.
 Therefore, we conclude that
\( (\tilde{\mu}, \tilde{m}) \in \hat{\Theta}(\mu, m, \alpha). \)

\textbf{Step 3: Consistency condition}

Finally, we verify the mean-field consistency condition \((\mu, m) \in \Gamma((\tilde{\mu}, \tilde{m}), \alpha, m)\). This is a direct consequence of the closed graph property of the conditioning mapping $\Gamma$, established in Lemma \ref{lemma:conditioning}.

Having verified all three conditions in Definition \ref{def:relaxed}, we conclude that $(\mu,m,\alpha,\tilde{\mu},\tilde{m})$ constitutes a relaxed equilibrium in the original MFG problem.}
\end{proof}

{
\begin{remark}
If  \(A\) is convex and, for each 
\((t,x,m,u,y)\in\mathbb{T}\setminus\{T\}\times S^0\times\cP^{\mathrm{sub}}(S)\times W\times\mathbb{R}^{|W|}\), the map
\[
    a \mapsto f^0_t(x,a,m)+\sum_{u' \in W} P^0_{t+1}(u,a,m,u')\,y_{u'}
\]
is strictly concave, then \(E_t(y,u,m)\) defined in \eqref{E} is a singleton. Consequently, the optimal relaxed control for the major player reduces to a strict control (or pure strategy).
\end{remark}}
{\section{MFG of Controls for the Minor Players}\label{sect:6}
In this section, we extend the entropy regularization method to the setting where the minor players face optimal control problems. Specifically, we consider a system consisting of a major player, whose path process is denoted by $U = (U_k)_{k \in \mathbb{T}}$, and a representative minor player with state process $X = (X_k)_{k \in \mathbb{T}}$. For  $k\in\mathbb{T}\setminus\{T\}$, the state transitions are governed by the following controlled stochastic kernels:
{\small\begin{equation}\label{transition_control}
  	\begin{split}
  			&U_{k+1}\sim \pi^0_{k+1}(U_{k},du';\alpha^0_k(U_{k}),\mu_k(U_{k})):= \int_{A^0}P^0_{k+1}(U_{k},a,\mu_{k}(U_{k}),du')\,\alpha^0_{k}(U_{k})(da),\\
  		&X_{k+1}\sim \pi^{1}_{k+1}(X_{k},U_{k},dx';\alpha^1_k(X_k,U_k),\mu_{k}(U_{k})):=\int_{A^1}P^1_{k+1}(X_k,U_{k},a,\mu_{k}(U_{k}),dx')\,\alpha^1_{k}(X_k,U_{k})(da).
  	\end{split}
\end{equation}}
Here, \(A^0\) and \(A^1\) denote the compact action sets of the major and minor players, respectively.  
The policies of the major and minor players are specified by measurable maps
$\alpha^0_k: W \to \mathcal{P}(A^0)$ and $\alpha^1_k: S \times W \to \mathcal{P}(A^1)$, respectively.
The  sets of admissible policies for the major and minor players are denoted by \(\mathcal{A}^0\) and \(\mathcal{A}^1\), respectively.

The mean-field interaction is captured by the collection $\mu = (\mu_k)_{k=0}^T$. For each $k\in\mathbb{T}\setminus\{T\}$, the term $\mu_k: W \to \mathcal{P}(S \times A^1)$ describes the minor player's conditional state-action distribution at time $k$, given the major player's information $U_k$. The terminal term $\mu_T: W \to \mathcal{P}(S)$ represents the conditional distribution of the minor player's terminal state $X_T$, given $U_T$.

The major player's objective is to maximize the reward functional:
\begin{equation*}
J^0(\alpha^0;\mu) = \mathbb{E} \left[ \sum_{k=0}^{T-1} \int_{A^0} f_{k}^0(U_{k},a,\mu_k(U_k))\,\alpha^0_{k}(U_k)(da) + g^0(U_T, \mu_{T}(U_T)) \right],
\end{equation*}
and the representative minor player aims to solve a stochastic control problem with the reward functional:
\begin{equation*}
J^1(\alpha^1;\alpha^0,\mu) = \mathbb{E} \left[ \sum_{k=0}^{T-1} \int_{A^1} f_{k}^1(X_k,U_{k},a,\mu_k(U_k))\,\alpha^1_{k}(X_k,U_k)(da) + g^1(X_T,U_T,\mu_{T}(U_T)) \right].
\end{equation*}
\subsection{Definitions of equilibrium}
The major player's best-response mapping \(\mathbb{A}(\mu): \mathcal{C}_{\mu}\rightarrow 2^{\mathcal{A}^0}\) is defined as :
\begin{align*}
    \mathbb{A}(\mu) := \operatorname*{arg\,max}_{\alpha^0 \in \mathcal{A}^0} J^0(\alpha^0;\mu),
\end{align*}
where \(\mathcal{C}_{\mu} := \left(\prod_{k \in \mathbb{T}\setminus\{T
\}} \mathcal{P}(S\times A^1)^{W}\right)\times\cP(S)^{W}\) is the space of mean-field flows.   

For any given \(\mu \in \mathcal{C}_{\mu}\) and \(\alpha^0 \in \mathcal{A}^0\), the minor player's  value function at time $t \in \mathbb{T}$ is defined as:
{\begin{align*}
    V^{1,\alpha^0,\mu}_t(x,u)=\sup_{\alpha^1 \in \mathcal{A}^1_t} \mathbb{E} \bigg[ \sum_{s=t}^{T-1} &\int_{A^1} f_{s}^1(X_s,U_{s},a,\mu_s(U_s))\,\alpha^1_{s}(X_s,U_s)(da)\\ &\quad\quad+ g^1(X_T,U_T,\mu_{T}(U_T)) \,\bigg|\, (X_t,U_t)=(x,u) \bigg].
\end{align*}}Here, \(\mathcal{A}^1_t\) denotes the set of the representative minor player's admissible controls from time \(t\) onwards.
Under mild continuity assumptions (see Assumption \ref{newa}), the value function $V^{1,\alpha^0,\mu}$ is well-defined and serves as the unique solution to the dynamic programming equation. The associated Bellman optimality operator $T_t^{\alpha^0, \mu}: C_b(S \times W) \to C_b(S \times W)$ is given by:
\begin{equation*}
    (T_t^{\alpha^0,\mu} v)(x,u) = \max _{a \in A^1}\left\{ f^1_t(x,u,a,\mu_t(u))+\int_{S\times W} v(x',u') \,\pi_{t+1}(x, u,a;\alpha^0_t(u),\mu_t(u);dx',du')\right\}.
\end{equation*}
where
\begin{align*}
    \pi_{t+1}(x,u,a;\alpha^0_t(u),\mu_t(u);dx',du') := \pi_{t+1}^0(u,du';\alpha^0_{t}(u),\mu_{t}(u))\,P^1_{t+1}(x,u,a,\mu_{t}(u),dx'),\quad t\in\mathbb{T}\setminus\{T\}.
\end{align*}
The value function satisfies the dynamic programming principle, expressed via the Bellman equation for all $t \in \mathbb{T}\setminus\{T\}$:
\[
V^{1,\alpha^0,\mu}_t(x,u) = (T_t^{\alpha^0,\mu}V^{1,\alpha^0,\mu}_{t+1})(x,u),
\]
with the terminal condition $V^{1,\alpha^0,\mu}_T(x,u) = g^1(x,u,\mu_T(u))$.

To analyze the equilibrium, we reformulate the minor player's problem in the space of state-action measure flows, $\mathcal{V} \coloneqq \left(\prod_{k\in\mathbb{T}\setminus\{T\}} \mathcal{P}(S \times W\times A^1)\right)\times\mathcal{P}(S \times W)$. We define two key sets:
\begin{itemize}
    \item The set of dynamically consistent flows is given by:
\begin{align*}
    \mathbb{C}(\alpha^0,\mu)=&\bigg\{v\in\mathcal{V}: v_{0}|_{S\times W}(dx,du)=m_0^*(dx,du),\\& v_{t+1}|_{S\times W}(dx,du)=\int_{S \times W\times A^1} \pi_{t+1}(x',u',a;\alpha^0_t(u'),\mu_t(u');dx,du) v_t(dx', du', da)\bigg\}
\end{align*}
    \item The set of optimal flows is defined by the dynamic programming principle
{\small\begin{align*}
\mathbb{B}(\alpha^0,\mu) =\bigg\{&v \in\mathcal{V}: \forall t\in\mathbb{T}\setminus\{T\}, v_t\Big(\Big\{(x,u,a): f_t^1(x,u,a,\mu_t(u)) 
\\&+ \int_{S\times W} V^{1,\alpha^0,\mu}_{t+1}(x',u')\, \pi_{t+1}(x,u,a;\alpha^0_t(u),\mu_t(u);dx',du')
= \left(T_t^{\alpha^0,\mu}V^{1,\alpha^0,\mu}_{t+1}\right)(x,u)\Big\}\Big)=1\bigg\} .
\end{align*}}
\end{itemize}
Note that the set \(\mathbb{C}(\alpha^0,\mu)\) characterizes the \emph{feasible state-action flows} consistent with the system dynamics, whereas \(\mathbb{B}(\alpha^0,\mu)\) characterizes the \emph{optimality condition} for the policy obtained by disintegrating the state-action flow in response to \((\alpha^0,\mu)\). 
Accordingly, the set of \emph{optimal state-action flows} for the minor player is given by
\begin{align*}
    \mathbb{D}(\alpha^0,\mu) := \mathbb{B}(\alpha^0,\mu) \cap \mathbb{C}(\alpha^0,\mu),
\end{align*}
which collects all optimal state-action measure flows, given the major player's control \(\alpha^0\) and the mean field process \(\mu\).

To relate these flows back to the mean-field term, we introduce the conditioning mapping
\(\Gamma: \mathcal{V} \to 2^{\mathcal{C}_\mu}\) that  
for any flow \(v = (v_t)_{t \in \mathbb{T}} \in \mathcal{V}\), its image \(\Gamma(v)\) is defined by
\begin{align*}
    \Gamma(v) := \bigg\{ \mu  \in \mathcal{C}_\mu: \;\; 
    &v_t(dx,du,da) = \mu_t(u)(dx,da)\,(v_t|_{W})(du), \;\; \forall t \in \mathbb{T}\setminus\{T\} \\
    &v_T(dx,du) = \mu_T(u)(dx)\,(v_T|_{W})(du)\bigg\}.
\end{align*}
By the disintegration theorem, the stochastic kernel \(\mu_t(u)\) is uniquely determined for 
\(u \in \operatorname{supp}(v_t|_{W})\).  
However, for any state \(u \in W\) such that \((v_t|_{W})(\{u\}) = 0\), the conditional distribution 
\(\mu_t(u)\) is not constrained by the disintegration and may be chosen arbitrarily from \(\mathcal{P}(S\times A^1)\) (for $t<T$) or $\mathcal{P}(S)$ (for $t=T$).
\begin{definition}[Relaxed Equilibrium]
A tuple \((\mu^*,\alpha^{0,*},v^*)\) is a relaxed equilibrium if it satisfies the following three conditions simultaneously:
\begin{enumerate}
    \item  Given the mean-field terms \(\mu^*\), the major player's relaxed control \(\alpha^{0,*} \in \mathcal{A}^0\) attains the optimality:
    \[ \alpha^{0,*} \in \mathbb{A}(\mu^*).\]

    \item  Given \((\mu^*,  \alpha^{0,*})\), the minor player's state-action flow \(v^* \in \mathcal{V}\) attains the optimality: 
    \[ v^* \in \mathbb{D}(\alpha^{0,*},\mu^*).\]

    \item The mean-field terms \(\mu^*\) is consistent with the minor player's state-action flow \(v^*\):
    \begin{align*}
        &v^*_t(dx, du,da) = \mu^*_t(u)(dx,da)  \, (v^*_t|_{W})(du), \quad\forall t \in \mathbb{T}\setminus\{T\}, \\
        &v^*_T(dx, du) = \mu^*_T(u)(dx) \, (v^*_T|_{W})(du).
    \end{align*}
\end{enumerate}
\end{definition}
With the preceding definitions, we can characterize the relaxed equilibrium in this major-minor MFG in terms of a fixed-point problem. Let us define the set-valued mapping $\Phi: \mathcal{C}_\mu \rightarrow 2^{\mathcal{C}_\mu}$ as follows:
\begin{align*}
    \Phi(\mu) \coloneqq \Gamma\left(\bigcup_{\alpha^0 \in \mathbb{A}(\mu)} \mathbb{D}(\alpha^0,\mu)\right).
\end{align*}
The set of relaxed equilibria for the major-minor MFG is precisely the set of fixed points of this mapping. 
Similarly, the fixed-point mapping \(\Phi\) may fail to be convex-valued. 
However, the required convexity property is recovered when the major player's control is fixed, as formalized in the following lemma.
\begin{lemma} \label{lemma:control}
\begin{itemize}
    \item [(i)] For any fixed tuple $(\alpha^0,\mu)\in\cA^0\times\mathcal{C}_\mu$  such that $\mathbb{D}(\alpha^0,\mu)$ is nonempty, the image set $\Gamma(\mathbb{D}(\alpha^0,\mu))$ is a nonempty convex subset of $\mathcal{C}_\mu$.
   \item[(ii)] The conditioning mapping $\Gamma: \mathcal{V} \rightarrow 2^{\mathcal{C}_\mu}$ has a closed graph.
\end{itemize}
\end{lemma}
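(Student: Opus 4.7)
The plan is to first make a structural observation that drives both parts: since the major player's transition kernel $\pi^0$ does not depend on the minor player's action, the $W$-marginal of any $v\in\mathbb{C}(\alpha^0,\mu)$ is determined solely by $(\alpha^0,\mu)$. Integrating $x$ and $a$ out of the identity defining $\mathbb{C}(\alpha^0,\mu)$ yields
\[
v_{t+1}|_W(du)=\int_W \pi^0_{t+1}(u',du;\alpha^0_t(u'),\mu_t(u'))\,(v_t|_W)(du'),\qquad v_0|_W=m_0^*|_W,
\]
so by induction $v_t|_W$ coincides with a fixed probability $p_t$ on the finite set $W$ for every $v\in\mathbb{C}(\alpha^0,\mu)$.

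For part (i), nonemptiness will be immediate: picking any $v\in\mathbb{D}(\alpha^0,\mu)$ and setting $\bar\mu_t(u)(dx,da):=v_t(dx,\{u\},da)/p_t(\{u\})$ on $\operatorname{supp}(p_t)$, extended arbitrarily elsewhere (for instance as a fixed reference element of $\mathcal{P}(S\times A^1)$) and with the obvious analogue at $t=T$, produces an element of $\Gamma(v)$. For convexity, I would first observe that $\mathbb{D}(\alpha^0,\mu)$ is itself convex: the equations defining $\mathbb{C}(\alpha^0,\mu)$ are linear in $v$, and the condition $v_t(B_t)=1$ defining $\mathbb{B}(\alpha^0,\mu)$, where $B_t$ is the fixed measurable subset of $(x,u,a)$ realising the Bellman optimum, is likewise linear in $v_t$. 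Then, given $\bar\mu^{(1)},\bar\mu^{(2)}\in\Gamma(\mathbb{D}(\alpha^0,\mu))$ with witnesses $v^{(1)},v^{(2)}\in\mathbb{D}(\alpha^0,\mu)$, I would set $v^\theta:=\theta v^{(1)}+(1-\theta)v^{(2)}\in\mathbb{D}(\alpha^0,\mu)$. Because $v^{(i)}_t|_W=p_t$, the definition of $\Gamma$ gives $v^{(i)}_t(dx,du,da)=\bar\mu^{(i)}_t(u)(dx,da)\,p_t(du)$, whence
\[
v^\theta_t(dx,du,da)=\bigl[\theta\bar\mu^{(1)}_t(u)+(1-\theta)\bar\mu^{(2)}_t(u)\bigr](dx,da)\,p_t(du),
\]
so $\theta\bar\mu^{(1)}+(1-\theta)\bar\mu^{(2)}\in\Gamma(v^\theta)\subset\Gamma(\mathbb{D}(\alpha^0,\mu))$, yielding convexity.

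For part (ii), I would let $v^n\to v$ in $\mathcal{V}$ and $\mu^n\to\mu^*$ in $\mathcal{C}_\mu$ with $\mu^n\in\Gamma(v^n)$. For each $t\in\mathbb{T}\setminus\{T\}$ and any $\varphi\in C_b(S\times W\times A^1)$, the defining relation of $\Gamma$ reads
\[
\int\varphi\,dv^n_t=\sum_{u\in W}(v^n_t|_W)(\{u\})\int_{S\times A^1}\varphi(x,u,a)\,\mu^n_t(u)(dx,da).
\]
Because $W$ is finite and discrete, $(v^n_t|_W)(\{u\})\to(v_t|_W)(\{u\})$ and $\mu^n_t(u)\to\mu^*_t(u)$ weakly for every $u$, so passing to the limit delivers the analogous identity for $v_t$ and $\mu^*_t$; since $C_b(S\times W\times A^1)$ separates measures, this is exactly the condition $\mu^*\in\Gamma(v)$, and the terminal case $t=T$ is handled identically using test functions in $C_b(S\times W)$.

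The step requiring care is the structural observation that $v|_W$ is constant across $\mathbb{D}(\alpha^0,\mu)$; without this invariance, the convex combination $v^\theta$ would not disintegrate into the corresponding convex combination of kernels, and the argument for (i) would collapse in the same way as the example following Definition~\ref{def:relaxed} collapsed for the overall mapping $\Phi$. Once this observation is in place, both claims reduce to linearity of the disintegration identity and routine weak-convergence arguments on a finite marginal space.
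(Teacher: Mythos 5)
Your proof is correct and follows essentially the route the paper intends: the paper omits the argument, asserting it is identical to the proofs of Lemmas \ref{lem:properties_gamma_theta} and \ref{lemma:conditioning}, and your write-up is exactly that adaptation carried out in the control setting. The one genuine addition is your preliminary observation that $v_t|_W$ is the same for every $v\in\mathbb{C}(\alpha^0,\mu)$ (because $\pi^0_{t+1}$ involves neither the minor player's state nor action), and this is precisely the point the paper's ``identical'' claim leaves implicit: in Section \ref{sect:6} the disintegration defining $\Gamma$ is taken against $v_t|_W$ rather than against an externally fixed marginal $p_t(\cdot;\alpha,m)$ as in Definition \ref{def:relaxed}, so without this marginal invariance the convexity argument of Lemma \ref{lem:properties_gamma_theta} would not transfer. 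Your convexity of $\mathbb{D}(\alpha^0,\mu)$ (linearity of the constraints in $\mathbb{C}$ together with preservation of $v_t(B_t)=1$ under convex combinations) plays the role of the convexity of $\hat{\Theta}$ used there, and your closed-graph argument for (ii) is, if anything, simpler than that of Lemma \ref{lemma:conditioning}, since convergence of the marginals $v^n_t|_W\to v_t|_W$ on the finite set $W$ follows immediately from weak convergence, with no inductive argument on the major player's law required.
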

\begin{proof}
The proofs are identical to those of Lemmas \ref{lem:properties_gamma_theta} and \ref{lemma:conditioning}, and are hence omitted.
\end{proof}

 Similar to the stopping case, we are motivated to consider whether the entropy regularization method can be applied in this control context. To this end, we introduce the major player's regularized reward functional for a regularization parameter $\lambda > 0$:
\begin{align*}
    J^0_{\lambda}(\alpha^0;\mu)
    \coloneqq J^0(\alpha^0;\mu)
    + \lambda\,\mathbb{E}\!\left[ \sum_{t=0}^{T-1} \mathcal{H}(\alpha_t^0(U_t)) \right].
\end{align*}
The major player's regularized best-response control is then defined as
\begin{align*}
    \alpha^0_{\lambda}(\mu) 
    \coloneqq \operatorname*{arg\,max}_{\alpha^0 \in \mathcal{D}} J^0_{\lambda}(\alpha^0;\mu),
\end{align*}
where $\mathcal{D}$ denotes the set of all regularized controls as in Definition~\ref{def:regular_con}.  A $\lambda$-regularized equilibrium is then characterized as a fixed point of the regularized equilibrium mapping 
$\Phi_\lambda: \mathcal{C}_\mu \rightarrow 2^{\mathcal{C}_\mu}$
 given by
\[
    \Phi_\lambda(\mu) \coloneqq \Gamma\big(\mathbb{D}(\alpha^0_{\lambda}(\mu),\mu)\big).
\]
\subsection{Technical assumptions}\label{subsect:assump}
We introduce the following standing assumptions within the control context. As the assumptions on the major player's problem are analogous to those in Section~\ref{sect:technical assumptions}, 
we only focus on the conditions related to the minor player's problem. 
\begin{assumption}\label{newa}
	\begin{enumerate}[\upshape (i)]
    \item \emph{(Minor Player Transition Kernel):} For each fixed $(k,u) \in (\mathbb{T} \setminus \{T\}) \times W$, the mapping
    \(
        (x, a, m) \mapsto P^1_{k+1}(x, u, a, m, dx)
    \)
    from $S \times A^1 \times \mathcal{P}(S\times A^1)$ to $\mathcal{P}(S)$ is continuous.

    \item \emph{(Minor Player Running Reward):} For each fixed $(t,u) \in (\mathbb{T} \setminus \{T\}) \times W$, the running reward function
    \(
        (x, a, m) \mapsto f^1_t(x, u, a, m)
    \)
    is continuous on $S \times A^1 \times \mathcal{P}(S\times A^1)$.

    \item \emph{(Minor Player Terminal Reward):} For each fixed $u \in W$, the terminal reward function
    \(
        (x, m) \mapsto g^1(x, u, m)
    \)
    is continuous on $S \times \mathcal{P}(S)$.
\end{enumerate}
\end{assumption}
\begin{remark}\label{remark:6.3}
Under Assumption~\ref{newa}, for any fixed pair $(\alpha^0, \mu)\in \mathcal{A}^0\times\mathcal{C}_{\mu}$, it is clear that the minor player's value function $V^{1,\alpha^0,\mu}$ is well-defined and is the unique continuous solution to the dynamic programming equation. Consequently, by a standard measurable selection argument, the set of optimal state-action flows $\mathbb{D}(\alpha^0, \mu)$ is nonempty.
\end{remark}
\subsection{Existence of relaxed equilibrium}
We again investigate the existence of a relaxed equilibrium by first establishing the existence of a regularized equilibrium and then taking the limit as the regularization vanishes.

To this end, we apply the Kakutani–Fan–Glicksberg fixed point theorem to the set-valued mapping  $\Phi_\lambda: \mathcal{C}_\mu \rightarrow 2^{\mathcal{C}_\mu}$ given by \[ \Phi_\lambda(\mu) \coloneqq \Gamma\big(\mathbb{D}(\alpha^0_{\lambda}(\mu),\mu)\big). \]
By Lemma~\ref{lemma:control} and Remark~\ref{remark:6.3}, $\Phi_\lambda$ has nonempty convex values; it thus remains to verify that $\Phi_\lambda$ has a closed graph. 
\begin{lemma}\label{lemma:control_closed}
    Under  Assumption \ref{a1new:P^0} (i), Assumption \ref{a2new:f0g0} and  Assumption~\ref{newa}, the set-valued mapping $\mathbb{D}: \mathcal{A}^0 \times \mathcal{C}_\mu \to 2^{\mathcal{V}}$ given by 
    \begin{align*}
    \mathbb{D}(\alpha^0,\mu) := \mathbb{B}(\alpha^0,\mu) \cap \mathbb{C}(\alpha^0,\mu),
\end{align*}
has a closed graph.
\end{lemma}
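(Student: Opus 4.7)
\noindent\textbf{Proof plan for Lemma \ref{lemma:control_closed}.} The strategy is to verify closedness of the two constituent graphs separately: first for the dynamic-consistency set $\mathbb{C}$, and then for the optimality set $\mathbb{B}$, using a uniform convergence argument for the latter. Throughout, let $(\alpha^{0,n},\mu^n,v^n) \to (\alpha^0,\mu,v)$ with $v^n \in \mathbb{D}(\alpha^{0,n},\mu^n)$; compactness of $\cA^0,\mathcal{C}_\mu,\mathcal{V}$ ensures we may work with the full sequence.

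\smallskip
\noindent\emph{Step 1: Closedness of $\mathbb{C}$.} For each test function $\varphi\in C_b(S\times W)$, the consistency relation
\[
\int_{S\times W}\!\varphi(x,u)\,v^n_{t+1}|_{S\times W}(dx,du)
=\int_{S\times W\times A^1}\!\!\Big(\int_{S\times W}\varphi(x',u')\,\pi_{t+1}(x,u,a;\alpha^{0,n}_t(u),\mu^n_t(u);dx',du')\Big)v^n_t(dx,du,da)
\]
holds for every $n$. By Assumption~\ref{a1new:P^0}(i), Remark~\ref{remark:pi0} (for $\pi^0$) and Assumption~\ref{newa}(i) (for $P^1$), the kernel $\pi_{t+1}(x,u,a;\alpha^0_t(u),\mu_t(u);\cdot)$ varies continuously (as a probability measure on $S\times W$) in $(x,a,\alpha^0,\mu)$. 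Hence the inner integral defines a bounded continuous function of $(x,u,a)$ that converges uniformly on the compact set $S\times W\times A^1$ as $n\to\infty$. Combining uniform convergence with the weak convergence $v^n_t \to v_t$ lets us pass to the limit and conclude $v\in\mathbb{C}(\alpha^0,\mu)$; the initial condition is preserved trivially.

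\smallskip
\noindent\emph{Step 2: Joint continuity of the minor player's value function.} I will show by backward induction on $t$ that $(x,u,\alpha^0,\mu)\mapsto V^{1,\alpha^0,\mu}_t(x,u)$ is continuous and bounded. At $t=T$, this is Assumption~\ref{newa}(iii). For the inductive step, assume continuity at $t+1$. Since $S\times W$ is compact and $V^{1,\alpha^{0,n},\mu^n}_{t+1}\to V^{1,\alpha^0,\mu}_{t+1}$ pointwise, Dini's theorem together with joint continuity gives uniform convergence on $S\times W$. Combined with weak convergence of $\pi_{t+1}$ in its parameters, the map
\[
(x,u,a,\alpha^0,\mu)\mapsto f^1_t(x,u,a,\mu_t(u))+\int_{S\times W}\!V^{1,\alpha^0,\mu}_{t+1}(x',u')\,\pi_{t+1}(x,u,a;\alpha^0_t(u),\mu_t(u);dx',du')
\]
is jointly continuous by Assumption~\ref{newa}(ii). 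Applying Berge's Maximum Theorem with respect to $a\in A^1$ yields continuity of $T_t^{\alpha^0,\mu}V^{1,\alpha^0,\mu}_{t+1}(x,u)=V^{1,\alpha^0,\mu}_t(x,u)$ in $(x,u,\alpha^0,\mu)$, closing the induction.

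\smallskip
\noindent\emph{Step 3: Closedness of $\mathbb{B}$ and conclusion.} Define the nonnegative Bellman gap
\[
\Psi_t(x,u,a;\alpha^0,\mu):=\bigl(T_t^{\alpha^0,\mu}V^{1,\alpha^0,\mu}_{t+1}\bigr)(x,u)-\Bigl[f^1_t(x,u,a,\mu_t(u))+\!\int\! V^{1,\alpha^0,\mu}_{t+1}\,\pi_{t+1}(x,u,a;\alpha^0_t(u),\mu_t(u);\cdot)\Bigr],
\]
which is jointly continuous by Step~2 and satisfies $v\in\mathbb{B}(\alpha^0,\mu)\iff\int\Psi_t\,dv_t=0$ for all $t<T$. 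Write $\Psi^n_t:=\Psi_t(\cdot;\alpha^{0,n},\mu^n)$ and $\Psi^\infty_t:=\Psi_t(\cdot;\alpha^0,\mu)$. Joint continuity on the compact set $S\times W\times A^1$ and the convergence $(\alpha^{0,n},\mu^n)\to(\alpha^0,\mu)$ yield $\|\Psi^n_t-\Psi^\infty_t\|_\infty\to 0$. Hence
\[
\Bigl|\!\int\Psi^\infty_t\,dv_t\Bigr|\le\Bigl|\!\int\Psi^\infty_t\,dv_t-\!\int\Psi^\infty_t\,dv^n_t\Bigr|+\|\Psi^n_t-\Psi^\infty_t\|_\infty+\Bigl|\!\int\Psi^n_t\,dv^n_t\Bigr|\longrightarrow 0,
\]
using weak convergence $v^n_t\to v_t$ and $\int\Psi^n_t\,dv^n_t=0$. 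Since $\Psi^\infty_t\ge 0$, this forces $v_t(\{\Psi^\infty_t=0\})=1$, so $v\in\mathbb{B}(\alpha^0,\mu)$. Together with Step~1, $v\in\mathbb{D}(\alpha^0,\mu)$, proving the closed-graph property. The main obstacle is Step~2: establishing \emph{joint} continuity of $V^{1,\alpha^0,\mu}$ in the parameters (as opposed to mere pointwise limits) is what upgrades the portmanteau-type argument in Step~3 to the uniform convergence needed to kill the Bellman gap.
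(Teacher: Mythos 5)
Your proposal is correct in substance and handles the consistency set $\mathbb{C}$ exactly as the paper does (test functions, continuity of the composite kernel, uniform continuity on compacts, weak convergence of $v^n_t$). Where you genuinely diverge is in the treatment of $\mathbb{B}$: the paper proves uniform convergence $V^{1,\alpha^{0,n},\mu^n}_t \to V^{1,\alpha^0,\mu}_t$ by backward induction with explicit sup estimates and then \emph{cites} Proposition~3.9 of \cite{saldi_markov--nash_2018} (and the discussion following its proof) to conclude that the limiting flow satisfies the optimality condition, whereas you make this step self-contained: you introduce the nonnegative Bellman-gap function $\Psi_t$, observe that membership in $\mathbb{B}$ is equivalent to $\int \Psi_t\,dv_t=0$, and kill the gap in the limit by combining $\|\Psi^n_t-\Psi^\infty_t\|_\infty\to 0$ with weak convergence $v^n_t\to v_t$ and $\Psi^\infty_t\ge 0$. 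This buys a proof that does not lean on an external result, at the cost of establishing the slightly stronger statement of joint continuity of $(x,u,\alpha^0,\mu)\mapsto V^{1,\alpha^0,\mu}_t(x,u)$; given compactness of $S$, $W$, $\mathcal{A}^0$, $\mathcal{C}_\mu$, that joint continuity is indeed equivalent in strength to the paper's uniform convergence along arbitrary convergent parameter sequences, so the two routes are interchangeable. One repair is needed in your Step~2: Dini's theorem is not applicable (there is no monotonicity in $n$); the uniform convergence of $V^n_{t+1}$ on $S\times W$ should instead be deduced from the induction hypothesis of joint continuity together with uniform continuity on the compact product $S\times W\times\mathcal{A}^0\times\mathcal{C}_\mu$ — with that substitution the induction, and hence the whole argument, goes through.
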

\begin{proof}
    Let $(\alpha^{0,n}, \mu^n) \to (\alpha^0, \mu)$ and $v^n \in \mathbb{D}(\alpha^{0,n}, \mu^n)$ with $v^n \to v$.  To prove $\mathbb{D}$ has a closed graph, it is sufficient to prove $v \in \mathbb{D}(\alpha^0, \mu)$.

We first show $v \in \mathbb{C}(\alpha^0, \mu)$. For each $n$ and $t \in \mathbb{T}\setminus\{T\}$, we have
\[
v^n_{t+1}|_{S\times W}(dx,du)
= \int_{S \times W\times A^1} \pi_{t+1}(x',u',a;\alpha^{0,n}_t(u'),\mu^n_t(u');dx,du)\, v^n_t(dx', du', da).
\]
To verify that the same equation holds in the limit, we test against an arbitrary $f \in C_b(S\times W)$. Define
\[
G_{t+1}(x',u',a,\alpha,\mu) \coloneqq \int_{S\times  W} f(x,u)\,
\pi_{t+1}(x',u',a;\alpha,\mu;dx,du).
\]
By the continuity assumptions on the kernels and the argument in Remark~\ref{remark:pi0}, $G_{t+1}$ is bounded and continuous in all variables. As $S \times W \times A^1 \times \cP(A^0) \times \cP(S\times A^1)$ is compact, $G_{t+1}$ is in fact uniformly continuous. Using this notation, the consistency condition for $v^n$ becomes:
\[
\int_{S\times W} f(x,u) \, v^n_{t+1}|_{S\times W}(dx,du) = \int_{S \times W\times A^1} G_{t+1}(x', u', a, \alpha^{0,n}_t(u'), \mu^n_t(u')) \, v^n_t(dx', du', da).
\]
By taking the limit as $n \to \infty$, the left-hand side converges to $\int f \, dv_{t+1}|_{S\times W}$ by the definition of weak convergence of $v^n_{t+1}$. For the right-hand side, as $W$ is finite and $G_{t+1}$ is uniformly continuous, we obtain
\begin{align}\label{equ:convergence}
  &\lim _{n \rightarrow \infty}\int_{S \times W\times A^1} G_{t+1}(x',u',a,\alpha^{0,n}_t(u'),\mu^n_t(u')) v^n_t(dx', du', da)\\
    &=\int_{S \times W\times A^1} G_{t+1}(x',u',a,\alpha^{0}_t(u'),\mu_t(u')) v_t(dx', du', da)\nonumber\\
    &=  \int_{S \times W\times A^1} \int_{S\times  W} f(x,u)  \pi_{t+1}(x',u',a;\alpha^{0}_t(u'),\mu_t(u');dx,du) v_t(dx', du', da).\nonumber
\end{align}
 Therefore, 
$$
v_{t+1}|_{S\times W}(dx,du)=\int_{S \times W\times A^1} \pi_{t+1}(x',u',a;\alpha^{0}_t(u'),\mu_t(u');dx,du) v_t(dx', du', da).
$$
from which we deduce that $v \in \mathbb{C}(\alpha^0, \mu)$.

It remains to prove that $v \in \mathbb{B}(\alpha^0, \mu)$.  
For notational convenience, set $V_t^n := V_t^{1,\alpha^{0,n},\mu^n}$ and $V_t := V_t^{1,\alpha^{0},\mu}$.  
By Proposition~3.9 in \cite{saldi_markov--nash_2018}, together with the discussion following its proof, it suffices to show that $V_t^n \to V_t$ uniformly for each $t \in \mathbb{T}$.

Recall that both $V^n$ and $V$ satisfy the dynamic programming equation:
\begin{align*}
    &V^n_t(x,u) = (T^{\alpha^{0,n},\mu^n}_t V^n_{t+1})(x,u), \quad V^n_T(x,u) = g^1(x,u,\mu^n_T(u)),\\
    &V_t(x,u) = (T^{\alpha^{0},\mu}_t V_{t+1})(x,u), \quad V_T(x,u) = g^1(x,u,\mu_T(u)).
\end{align*}
 We prove the uniform convergence of the value functions, $V^n_t \to V_t$, by backward induction. At the terminal time $T$, since $g^1$ is uniformly continuous and $W$ is finite, we have
\[
    \lim_{n\to\infty} \sup_{(x,u) \in S \times W} |V^n_T(x,u) - V_T(x,u)| = 0.
\]
Now, assume the claim holds for time $t+1$, i.e., $\sup_{(x',u') \in S \times W} |V^n_{t+1}(x',u') - V_{t+1}(x',u')| \to 0$ as $n \to \infty$. Then, for time $t$, we have
\begin{align*}
    \sup_{(x,u)} &|V^n_t(x,u) - V_t(x,u)| 
    \\&= \sup_{(x,u)} \bigg| \max_{a \in A^1} \Big[ f^1_t(x,u,a,\mu_t^n(u)) + \langle V^n_{t+1}, \pi_{t+1}(x,u,a;\alpha^{0,n}_t(u),\mu^n_t(u)) \rangle \Big] \\
    & - \max_{a \in A^1} \Big[ f^1_t(x,u,a,\mu_t(u)) + \langle V_{t+1}, \pi_{t+1}(x,u,a;\alpha^{0}_t(u),\mu_t(u)) \rangle \Big] \bigg|  \\
    &\le \sup_{(x,u,a)} \left| f^1_t(x,u,a,\mu_t^n(u)) - f^1_t(x,u,a,\mu_t(u)) \right| \\
    &+\sup_{(x,u,a)} \left|  \langle V^n_{t+1}, \pi_{t+1}(x,u,a;\alpha^{0,n}_t(u),\mu^n_t(u)) \rangle - \langle V_{t+1}, \pi_{t+1}(x,u,a;\alpha^{0}_t(u),\mu_t(u)) \rangle \right| \\
    &\le \sup_{(x,u,a)} \left| f^1_t(x,u,a,\mu_t^n(u)) - f^1_t(x,u,a,\mu_t(u)) \right| + \sup_{(x',u')} |V^n_{t+1}(x',u') - V_{t+1}(x',u')| \\
    & + \sup_{(x,u,a)} \bigg| \int_{S \times W} V_{t+1}(x',u')  \pi_{t+1}(x,u,a;\alpha^{0,n}_t(u),\mu^n_t(u);dx',du')\\ &-\int_{S \times W} V_{t+1}(x',u') \pi_{t+1}(x,u,a;\alpha^{0}_t(u),\mu_t(u);dx',du')  \bigg|,
\end{align*}
where $\langle V, \pi \rangle$ denotes the integral of $V$ with respect to $\pi$. Then, by the uniform continuity of \(f^1_t\) and \(\pi_{t+1}\), together with the finiteness of \(W\) and the induction hypothesis, we obtain
\[
    \lim_{n\to\infty} \sup_{(x,u) \in S \times W} |V^n_t(x,u) - V_t(x,u)| = 0.
\]
 By backward induction, the uniform convergence holds for all $t \in \mathbb{T}$, which implies $v \in \mathbb{B}(\alpha^0, \mu)$. Hence, $v \in \mathbb{D}(\alpha^0, \mu)$ as desired. 
\end{proof}

With the above lemma, the closed graph property of $\Phi_\lambda$ follows from a composition of continuous and closed-graph mappings. Let $(\mu^n, \mu'^n) \to (\mu, \mu')$ be a sequence in the graph of $\Phi_\lambda$. There exists $v^n \in \mathbb{D}(\alpha^{0}_\lambda(\mu^n), \mu^n)$ such that $\mu'^n \in \Gamma(v^n)$. By compactness of $\mathcal{V}$, we can extract a convergent subsequence $v^n \to v$. By the closed graph of $\mathbb{D}$ (Lemma \ref{lemma:control_closed}) and the continuity of $\alpha^0_\lambda$ (Remark \ref{remark:alphacontinu}), we have $v \in \mathbb{D}(\alpha^0_\lambda(\mu), \mu)$. By the closed graph of $\Gamma$ (Lemma~\ref{lemma:control}), we have $\mu' \in \Gamma(v)$. This chain of arguments implies $\mu' \in \Phi_\lambda(\mu)$, so $\Phi_\lambda$ has a closed graph. Consequently, a $\lambda$-regularized equilibrium exists for every $\lambda > 0$.

Let \( \{\lambda_n\}_{n \geq 1} \) be a sequence in \((0, 1]\) with \(\lambda_n \downarrow 0\). For each \(n\), let \( (\mu^n, \alpha^{0,n}, v^n) \) be a $\lambda_n$-regularized equilibrium tuple, where $\alpha^{0,n} = \alpha^0_{\lambda_n}(\mu^n)$ and $v^n \in \mathbb{D}(\alpha^{0,n}, \mu^n)$ such that $\mu^n \in \Gamma(v^n)$. In view that $\mathcal{C}_{\mu} \times \mathcal{A} \times \mathcal{V}$ is compact, the sequence of equilibrium tuples admits a convergent subsequence converging to a limit point \( (\mu, \alpha, v) \). 

Analogous to Theorem~\ref{thm:existencerelaxed},  one can show that $\alpha \in \mathbb{A}(\mu)$. By the closed graph properties of $\mathbb{D}$ and $\Gamma$, it then follows that $v \in \mathbb{D}(\alpha, \mu)$ and $\mu \in \Gamma(v)$, thereby establishing the existence of a relaxed equilibrium in the original major-minor MFG.

}

\appendix
\section{Proofs of Lemmas}\label{appendix:proof}
\begin{proof}[Proof of Lemma \ref{lem:properties_gamma_theta}]
     Let $K \coloneqq \Gamma(\hat{\Theta}(\mu, m, \alpha), \alpha, m)$.  For a fixed $\alpha$ and $m$, the marginal law of the major player's state, $p_t(\cdot; \alpha, m)$, is uniquely determined and fixed.
Let $(\mu^1, m^1)$ and $(\mu^2, m^2)$ be two points in $K$. By definition, there exist occupation measure pairs $(\tilde{\mu}^1, \tilde{m}^1), (\tilde{\mu}^2, \tilde{m}^2) \in \hat{\Theta}(\mu, m, \alpha)$ such that $(\mu^1, m^1) \in \Gamma((\tilde{\mu}^1, \tilde{m}^1), \alpha, m)$ and $(\mu^2, m^2) \in \Gamma((\tilde{\mu}^2, \tilde{m}^2), \alpha, m)$.
Let $\lambda \in [0,1]$. Consider the convex combinations:
\begin{align*}
    (\tilde{\mu}^\lambda, \tilde{m}^\lambda) &\coloneqq \lambda(\tilde{\mu}^1, \tilde{m}^1) + (1-\lambda)(\tilde{\mu}^2, \tilde{m}^2) \\
    (\mu^\lambda, m^\lambda) &\coloneqq \lambda(\mu^1, m^1) + (1-\lambda)(\mu^2, m^2)
\end{align*}
Since $\hat{\Theta}(\mu, m, \alpha)$ is a convex set,  $(\tilde{\mu}^\lambda, \tilde{m}^\lambda)$ is also in $\hat{\Theta}(\mu, m, \alpha)$. We now show that \((\mu^\lambda, m^\lambda)\) is a disintegration of \((\tilde{\mu}^\lambda, \tilde{m}^\lambda)\) with respect to the fixed marginal law \(p_t(\cdot; \alpha, m)\).
 For any $t$,
\begin{align*}
    \tilde{\mu}^\lambda_t(dx, du) &= \lambda \tilde{\mu}^1_t(dx, du) + (1-\lambda) \tilde{\mu}^2_t(dx, du) \\
    &= \lambda \mu^1_t(u)(dx) p_t(du; \alpha, m) + (1-\lambda) \mu^2_t(u)(dx) p_t(du; \alpha, m) \\
    &= (\lambda \mu^1_t(u) + (1-\lambda) \mu^2_t(u))(dx) p_t(du; \alpha, m) \\
    &= \mu^\lambda_t(u)(dx) p_t(du; \alpha, m).
\end{align*}
A similar calculation holds for $\tilde{m}^\lambda_t$. This shows that $(\mu^\lambda, m^\lambda) \in \Gamma((\tilde{\mu}^\lambda, \tilde{m}^\lambda), \alpha, m)$. Since $(\tilde{\mu}^\lambda, \tilde{m}^\lambda) \in \hat{\Theta}(\mu, m, \alpha)$, it follows that $(\mu^\lambda, m^\lambda) \in K$. Therefore, $K$ is convex.
\end{proof}

\begin{proof}[Proof of Lemma \ref{lemma:r[m,a]}]
	 Relative compactness follows from the fact that $\mathcal{P}^{\text {sub }}(S\times W)$ is compact and henceforth $\mathcal{V}$ is also compact. Let us check that $\mathcal{R}[m;\alpha]$ is closed. Consider a sequence $\left(\tilde{\mu}^n, \tilde{m}^n\right)_{n \geq 1} \subset \mathcal{R}[m;\alpha]$ converging to some $(\tilde{\mu}, \tilde{m})$ and we claim that $(\tilde{\mu}, \tilde{m}) \in \mathcal{R}[m;\alpha]$. For all $\varphi \in C_b(\mathbb{T} \times S \times W)$ and $n \geq 1$,
	\begin{align*}
	\sum_{t=0}^{T}  \int_{S\times W} \varphi(t, x, u) \tilde{\mu}^n_t(dx,du)
	&=  \int_{S\times W} \varphi(0, x, u) m_0^*(dx,du) \\&+ \sum_{t=0}^{T-1}  \int_{S\times W} \mathcal{L}(\varphi)(t, x, u;\alpha_t(u),m_{t}(u))\tilde{m}^n_t(dx,du).
\end{align*}
	In view that for each $t \in \mathbb{T} \backslash\{T\}$ the function $(x,u) \mapsto \mathcal{L}(\varphi)(t, x, u;\alpha_t(u),m_{t}(u))$ is continuous (by Remark \ref{remark:pi}) and bounded, we have
	$$
		\lim _{n \rightarrow \infty}  \int_S \mathcal{L}(\varphi)(t, x, u;\alpha_t(u),m_{t}(u))\tilde{m}^n_t(dx,du)= \int_S \mathcal{L}(\varphi)(t, x, u;\alpha_t(u),m_{t}(u))\tilde{m}_t(dx,du) .
	$$
	Therefore, by passing directly to the limit, we conclude that $(\tilde{\mu}, \tilde{m}) \in \mathcal{R}[m;\alpha]$.
\end{proof}
\begin{proof}[Proof of Lemma \ref{lemma:Gamma}]
 Let $\left(\bar{\mu}^n, \bar{m}^n\right)_{n \geq 1} \subset \mathcal{V}$ and $\left(\mu^n, m^n\right)_{n \geq 1} \subset \mathcal{C}_{\mu}\times\mathcal{C}_{m}$ be two sequences converging to $(\bar{\mu}, \bar{m})\in\mathcal{V}$ and $(\mu, m) \in  \mathcal{C}_{\mu}\times\mathcal{C}_{m}$ respectively. {{By Assumption~\ref{a3:minor}, and using the same reasoning as in Remark~\ref{remark:pi0} and the fact that \(W\) is a finite set, we obtain that for all \(t \in \mathbb{T} \setminus \{T\}\),
	$$
	\int_{S\times W} f_t\left(x, u, m_t^n(u)\right) \bar{m}_t^n(dx,du) \underset{n \rightarrow \infty}{\longrightarrow} \int_{S\times W} f_t\left(x, u, m_t(u)\right) \bar{m}_t(dx,du),
 	$$
	and for all $t \in \mathbb{T}$,
	$$
	\int_{S\times W} g_t\left(x, u, \bar{\mu}_t^n(u)\right) \mu_t^n(d x,du) \underset{n \rightarrow \infty}{\longrightarrow} \int_{S\times W} g_t\left(x, u, \bar{\mu}_t(u)\right) \mu_t(d x,du) .
	$$}}
Therefore, it follows that $((\bar{\mu}, \bar{m}),(\mu, m)) \mapsto J^1(\bar{\mu}, \bar{m};\mu, m)$ is continuous.  
\end{proof}
\begin{proof}[Proof of Lemma \ref{lemma:conditioning}]
Let $(((\tilde{\mu}^n, \tilde{m}^n), \alpha^n, m^n), (\mu'^n, m'^n))$ be a sequence in the graph of $\Gamma$ converging to $(((\tilde{\mu}, \tilde{m}), \alpha, m), (\mu', m'))$. We need to show that the limit point is also in the graph.

First, by Remark~\ref{remark:pi0} and an inductive argument, the pointwise convergence of $(\alpha^n, m^n)$ to $(\alpha, m)$ implies the weak convergence of the major player's marginal law: $p_t(\cdot; \alpha^n, m^n) \to p_t(\cdot; \alpha, m)$ for each $t \in \mathbb{T}$.

The condition $(\mu'^n, m'^n) \in \Gamma((\tilde{\mu}^n, \tilde{m}^n), \alpha^n, m^n)$ means that for any $t$ and for all bounded continuous test functions $f: S\times W \to \mathbb{R}$, the following equality holds:
\begin{equation*}\label{eq:disintegration_test_n}
    \int_{S \times W} f(x,u) \tilde{\mu}^n_t(dx, du) =   \int_{S\times W} f(x,u) \, \mu'^n_t(u)(dx)  p_t(du; \alpha^n, m^n).
\end{equation*}
 Passing to the limit as $n \to \infty$, we obtain
\[  \int_{S \times W} f(x,u) \tilde{\mu}_t(dx, du) =   \int_{S\times W} f(x,u) \, \mu'_t(u)(dx)  p_t(du; \alpha, m). \]
 It follows that
\[
\tilde{\mu}_t(dx,du) = \mu'_t(u)(dx)\, p_t(du; \alpha, m).
\] An entirely analogous argument applies to $(\tilde{m}^n_t, m'^n_t)$. We conclude that $(\mu', m') \in \Gamma((\tilde{\mu}, \tilde{m}), \alpha, m)$.  
Hence the graph of $\Gamma$ is closed.
\end{proof}

\section{Probability Space Setup}\label{appendixB}
In this section, we provide some detailed construction of the filtered probability space that has been used in the proof of Lemma \ref{setR*contin}. 
Let us consider a probability space \((\Theta, \mathcal{B}_{\Theta}, \mathbb{P}_{\vartheta})\), where we assume that \(\mathbb{P}_{\vartheta}\) is atomless. This assumption guarantees the existence of Borel measurable functions \(h: \Theta \to [0,1]\) such that \(h\) is uniformly distributed on \([0,1]\) when viewed as a random variable on \((\Theta, \mathcal{B}_{\Theta}, \mathbb{P}_{\vartheta})\). These uniform random variables, constructed via the functions \(h\), will be employed repeatedly along with the following classical result from measure theory, which can be found in \cite{Carmona2023}.
\begin{lemma}(Blackwell-Dubins lemma)\label{lemma:a1}
	 For any Polish space $B$, there exists a measurable function $\rho_B: \mathcal{P}(B) \times[0,1] \mapsto B$, which we shall call the Blackwell-Dubins function of the space $B$, satisfying:
	 \begin{itemize}
	 	\item[(i)] for each $v \in \mathcal{P}(B)$ and each uniform random variable $U \sim U(0,1)$, the $B$-valued random variable $\rho_B(\nu, U)$ has distribution $\nu$;
	 	
	 	\item[(ii)]  for almost every $u \in[0,1]$, the function $v \mapsto \rho_B(v, u)$ is continuous for the weak topology of $\mathcal{P}(B)$.
	 \end{itemize}
\end{lemma}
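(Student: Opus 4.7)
The plan is to build $\rho_B$ in two stages: first for $B=[0,1]$ by the classical inverse-CDF (quantile) construction, and then extend to an arbitrary Polish space via a continuous embedding into the Hilbert cube.

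In Stage 1, I will set
\[
\rho_{[0,1]}(\nu, u) := \inf\{x \in [0,1] : F_\nu(x) \geq u\}, \qquad F_\nu(x) := \nu([0,x]).
\]
Joint Borel measurability follows from the identity $\{(\nu, u): \rho_{[0,1]}(\nu, u) \leq x\} = \{(\nu, u): F_\nu(x) \geq u\}$ together with the Borel measurability of $\nu \mapsto F_\nu(x)$ (valid outside an at-most-countable set of atoms of $\nu$, which suffices). Property (i), the standard inverse-transform representation, is classical. For property (ii), since $u \mapsto \rho_{[0,1]}(\nu, u)$ is nondecreasing, its discontinuity set $D_\nu \subset [0,1]$ is countable and hence Lebesgue-null; at any $u \notin D_\nu$, weak convergence $\nu_n \to \nu$ combined with the Portmanteau theorem at continuity points of $F_\nu$ will yield $\rho_{[0,1]}(\nu_n, u) \to \rho_{[0,1]}(\nu, u)$.

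In Stage 2, I will invoke Urysohn's embedding theorem to view $B$ as a Borel subset $\tilde{B}$ of the Hilbert cube $H := [0,1]^{\mathbb{N}}$ via a homeomorphism $\iota: B \to \tilde{B}$. The pushforward $\nu \mapsto \iota_* \nu$ is then a homeomorphism $\mathcal{P}(B) \to \mathcal{P}(\tilde{B}) \subset \mathcal{P}(H)$ for the weak topologies. Using a Lebesgue-preserving measurable bijection $[0,1] \to [0,1]^{\mathbb{N}}$, I will iterate the one-dimensional construction of Stage 1 along the regular conditional distributions of successive coordinates of $\nu$ to produce a jointly measurable $\rho_H: \mathcal{P}(H) \times [0,1] \to H$ satisfying (i). Setting $\rho_B(\nu, u) := \iota^{-1}(\rho_H(\iota_*\nu, u))$ on the event $\{\rho_H(\iota_*\nu, u) \in \tilde{B}\}$ (which has full measure in $u$ by (i)) and extending arbitrarily but measurably on the complementary null set yields the desired function on $B$.

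The hard part will be preserving property (ii) under the passage from $[0,1]$ to the Hilbert cube: weak convergence $\nu_n \to \nu$ on $H$ does not transfer to weak convergence of the regular conditional distributions, so a naive coordinate-wise quantile argument fails at the joint level. A cleaner remedy will be to bypass iterated conditioning and exploit a universal Skorokhod-type coupling on $([0,1], \mathrm{Leb})$: for any weakly convergent sequence one constructs almost-surely convergent representatives, and one verifies that a quantile-style $\rho_H$ may be chosen to realize a canonical version of this coupling that depends continuously on $\nu$ for Lebesgue-almost every $u$. Alternatively, one can finesse this by approximating $\nu$ through its marginals on finite-dimensional projections $[0,1]^k$, applying Stage 1 to each, and taking a limit whose measurability and a.e.-continuity are inherited from the finite-dimensional level.
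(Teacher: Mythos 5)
The paper itself offers no proof of this lemma: it is the Blackwell--Dubins representation theorem, quoted from the cited reference, so your proposal has to stand on its own. Stage 1 is essentially correct as a treatment of $B=[0,1]$, with one calibration issue: what the quantile construction yields is that, for each fixed $\nu$, the map $\nu'\mapsto\rho_{[0,1]}(\nu',u)$ is continuous \emph{at} $\nu$ for every $u$ outside the $\nu$-dependent countable set $D_\nu$. That per-measure (equivalently, sequential) form is the correct—and the only provable—version of (ii). The literal statement (ii), a single Lebesgue-null set of $u$ outside of which $\nu\mapsto\rho_B(\nu,u)$ is continuous on all of $\mathcal{P}(B)$, is in fact false whenever $B$ contains two points $b_0\neq b_1$: along the weakly continuous path $p\mapsto\nu_p:=p\delta_{b_1}+(1-p)\delta_{b_0}$, property (i) and Fubini force $\rho(\nu_p,u)\in\{b_0,b_1\}$ for a.e.\ $(p,u)$, so for a.e.\ $u$ a continuous $p\mapsto\rho(\nu_p,u)$ would be $\{b_0,b_1\}$-valued on a connected domain, hence constant in $p$, contradicting (i). So you should aim at (and your Stage 1 delivers) the statement ``for every $\nu_n\to\nu$ weakly, $\rho(\nu_n,u)\to\rho(\nu,u)$ for a.e.\ $u$''; the overstatement is in the quoted lemma, not in your argument. (Minor: $\nu\mapsto F_\nu(x)$ is Borel measurable for every fixed $x$; the parenthetical about atoms confuses continuity with measurability and is unnecessary.)

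The genuine gap is Stage 2, which is where the entire content of the theorem lies. You correctly diagnose the obstruction—regular conditional distributions are not weakly continuous functionals of the joint law, so the iterated (Knothe--Rosenblatt) quantile map on the Hilbert cube does not inherit the a.e.\ continuity from dimension one—but neither proposed remedy is an argument. Invoking a ``universal Skorokhod-type coupling on $([0,1],\mathrm{Leb})$ realizing a canonical version that depends continuously on $\nu$ for a.e.\ $u$'' is precisely the statement to be proved, so this is circular. The finite-dimensional alternative does not get off the ground either: Stage 1 covers only dimension one, the obstruction you identified already appears on $[0,1]^2$, coordinatewise quantiles of the marginals couple the wrong joint law unless $\nu$ is a product measure, and no argument is given that the finite-dimensional maps converge or that (i) and the a.e.\ continuity survive the limit. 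As written, the proof is complete only for $B\subseteq\mathbb{R}$ (the embedding and pushforward step is fine once $\rho_H$ exists, since $\iota^{-1}$ is continuous on $\tilde{B}$). Closing the gap requires a genuinely different construction, namely the one in Blackwell and Dubins' original 1983 paper, which is what the present paper implicitly relies on through its citation.
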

 Consider the canonical probability space \((\Omega^c, \mathcal{F}^c, \mathbb{P}^c)\), defined as follows:  
\begin{align*}
	 \Omega^c := \underbrace{\Theta \times \Theta \times \cdots \times \Theta}_{T+1}, \quad\quad
	 \mathcal{F}^c := \underbrace{\mathcal{B}_{\Theta} \times \mathcal{B}_{\Theta} \times \cdots \times \mathcal{B}_{\Theta}}_{T+1},
\end{align*}
and equipped with the product probability measure  
\[
\mathbb{P}^c := \underbrace{\mathbb{P}_{\vartheta} \otimes \mathbb{P}_{\vartheta} \otimes \cdots \otimes \mathbb{P}_{\vartheta}}_{T+1}.
\]  
A generic element \(\omega \in \Omega^c\) is represented as  $\omega = (\theta_0, \theta_1, \ldots, \theta_T)$, and the random variables \(\vartheta_n\) are realized as coordinate mappings, defined for \(n \in \mathbb{T}\) by \(\vartheta_n(\omega) = \theta_n\). 
We  introduce the filtration \(\mathbb{F} = (\mathcal{F}_n)_{n \in \mathbb{T}}\), where  
$\mathcal{F}_n = \sigma\{\theta_0, \ldots, \theta_n\}$, $n \in \mathbb{T}$. Consider the following system:
\begin{align*}
	&(X_0,U_0)=(X^n_0,U^n_0)=\rho_{B}(m^*_{0},h(\vartheta_0)),\quad n\in\mN,\\
	&(X_{t+1},U_{t+1})=\rho_B(\pi_{t+1}(X_t,U_t),h(\vartheta_{t+1})),\quad t\in \mathbb{T}\setminus\{T\},\\
	&(X^n_{t+1},U^n_{t+1})=\rho_B(\pi^n_{t+1}(X^n_t,U^n_t),h(\vartheta_{t+1})),\quad t\in \mathbb{T}\setminus\{T\},\, n\in\mN,
\end{align*}
where  \(B = S \times W\) and 
\begin{equation*}
	\pi^n_{t+1}(x,u)=\pi_{t+1}(x,u;\alpha^n_t(u),m^n_t(u);dx',du'),\, \pi_{t+1}(x,u)=\pi_{t+1}(x,u;\alpha_t(u),m_t(u);dx',du')
\end{equation*}
with $\alpha^n\rightarrow\alpha$ and $m^n\rightarrow m$.

Thus, \((X, U)\) and \((X^n, U^n)\) are \(\mathbb{F}\)-Markov processes with transition kernels \(\pi\) and \(\pi^n\), respectively. We can show by induction that for any \(k \in \mathbb{T}\), \((X^n_k, U^n_k) \to (X_k, U_k)\) \(\mathbb{P}^{c}\)-almost surely. The base case \(k = 0\) is obvious. Assume  that \((X^n_k, U^n_k) \to (X_k, U_k)\) \(\mathbb{P}^{c}\)-a.s.  By Remark \ref{remark:pi}, Lemma \ref{lemma:a1}, and the induction hypothesis, we conclude that \((X^n_{k+1}, U^n_{k+1}) \to (X_{k+1}, U_{k+1})\) \(\mathbb{P}^{c}\)-a.s, which verifies the claim.

\vspace{0.3in}\noindent
\textbf{Acknowledgement}: The authors are grateful to two anonymous referees for their helpful comments
and suggestions. X. Yu and K. Zhang are supported by the Hong Kong Polytechnic University research Grant under No. P0045654 and by the Research Centre for Quantitative Finance at the Hong Kong Polytechnic University under grant No. P0042708.

	\bibliographystyle{siam}
	{\small
	\bibliography{ref}
	}
	
\end{document}